\documentclass[11pt]{amsart}
\usepackage{amsmath,amssymb,amsthm}
\usepackage{graphicx}
\usepackage{booktabs}
\usepackage[margin=1.1in]{geometry}
\usepackage[colorlinks=true,linkcolor=blue,citecolor=blue]{hyperref}

\newtheorem{theorem}{Theorem}[section]
\newtheorem{lemma}[theorem]{Lemma}
\newtheorem{proposition}[theorem]{Proposition}
\newtheorem{corollary}[theorem]{Corollary}
\theoremstyle{definition}
\newtheorem{assumption}[theorem]{Assumption}
\newtheorem{rulenv}[theorem]{Rule}
\newtheorem{remark}[theorem]{Remark}

\newcommand{\X}{X}
\newcommand{\D}{\mathcal{D}}
\newcommand{\N}{\mathcal{N}}
\newcommand{\R}{\mathcal{R}}
\newcommand{\ip}[2]{\langle #1,#2\rangle}
\newcommand{\dg}{\dagger}

\title[A noise-level-free parameter choice for iterated Lavrentiev regularization]{A noise-level-free parameter-choice rule for iterated Lavrentiev regularization of nonlinear potential monotone ill-posed problems}

\author{Ankit Singh}
\address{Department of Applied Science and Humanities, SVKM's College of Engineering, Shirpur, India}
\email{ankit1vaishali@gmail.com}

\author{Hemant Sharma}
\address{Department of Mathematics, Indian Institute of Information Technology, Design and Manufacturing Kancheepuram, Chennai, India}
\email{sharmahemant39@gmail.com}
\thanks{Corresponding author: Hemant Sharma (\texttt{sharmahemant39@gmail.com})}

\keywords{Nonlinear ill-posed problems; potential monotone operator; iterated Lavrentiev regularization; heuristic parameter-choice rule; retrospective stopping index; noise-level-free parameter choice; Lipschitz condition}
\subjclass[2020]{65J20, 47J06, 65J15}



\begin{document}

\begin{abstract}
We consider nonlinear ill-posed operator equations $F(x)=y$ in a Hilbert space, where $F$ is monotone with a Lipschitz continuous Fr\'echet derivative that is self-adjoint near the solution $x^\dg$ (a locally \emph{potential} operator), and only noisy data $y^\delta$ with unknown noise norm $\delta:=\|y-y^\delta\|$ is available. For the nonstationary iterated Lavrentiev regularization
\[
x_{k+1}^\delta=x_k^\delta-\bigl(F'(x_k^\delta)+\alpha_kI\bigr)^{-1}\bigl(F(x_k^\delta)-y^\delta+\alpha_k(x_k^\delta-x_0)\bigr),
\]
all classical stopping rules require $\delta$. We analyze a \emph{noise-level-free} parameter-choice rule of retrospective type, selecting a minimizer $k_*$ of $\theta(k,y^\delta)=\|F(x_k^\delta)-y^\delta\|^2/\alpha_k^2$ over a finite computed window. Under a standard noise condition (necessary in view of the Bakushinskii veto) the rule is well defined and $\alpha_{k_*}^2\ge c\,\delta/\|x_0-x^\dg\|$. If moreover $x_0-x^\dg\in\R(F'(x^\dg))$ and $k_*\ge1$, we prove $\|x_{k_*}^\delta-x^\dg\|=O(\sqrt{\delta_*}+\sqrt{\delta})$ with the computable $\delta_*=\|F(x_{k_*}^\delta)-y^\delta\|$ satisfying $\kappa\delta\le\delta_*\le C\sqrt\delta$, yielding the baseline rate $O(\delta^{1/4})$ along admissible growing-horizon families, and $O(\sqrt\delta)$ whenever $\delta_*=O(\delta)$; no range-invariance or tangential cone conditions are imposed. Rates under H\"older source conditions with $p\ge1$ (including saturation) follow; the case $p<1$, the removal of the self-adjointness assumption, and a full oracle inequality are identified as open problems. Numerical experiments in one, two, and three dimensions with monotone operators $F(x)=A^*\varphi(Ax)$, organized as independently recomputed matched-model validation runs complemented by clearly separated model-mismatch stress tests, compare the rule with two noise-level-based a~posteriori rules and support the theory within its stated scope.
\end{abstract}

\maketitle

\section{Introduction}\label{sec:intro}

\subsection{The problem and the method}\label{subsec:problem}

We wish to solve the nonlinear operator equation
\begin{equation}\label{eq:main}
F(x)=y,
\end{equation}
where $F:\D(F)\subset\X\to\X$ is a nonlinear operator on a Hilbert space $\X$ with inner product $\ip{\cdot}{\cdot}$ and norm $\|\cdot\|$. Throughout, $F$ is assumed to be \emph{monotone},
\[
\ip{F(x)-F(\tilde x)}{x-\tilde x}\ \ge\ 0 \qquad \text{for all } x,\tilde x\in\D(F),
\]
and \emph{Fr\'echet differentiable} in a neighborhood of a solution $x^\dg$ of \eqref{eq:main}, with a Lipschitz continuous derivative $F'$; the precise conditions are collected in Assumption~\ref{ass:nonlinearity} below. Equation \eqref{eq:main} is \emph{ill-posed}: its solution does not depend continuously on the right-hand side. This matters because in practice the exact data $y$ is not accessible; we are only given noisy data $y^\delta\ne y$, and we define
\begin{equation}\label{eq:noise}
\delta:=\|y-y^\delta\| ,
\end{equation}
the exact noise norm. Without regularization, even arbitrarily small data errors can produce arbitrarily large solution errors.

Two comments on the convention \eqref{eq:noise} are in order. First, $\delta$ is in general unknown; this causes no difficulty because neither the method nor the stopping rule below uses it. Second, the exact norm (rather than an upper bound $\bar\delta$) is essential for the lower bounds in the analysis: the noise condition $\|F(x_k^\delta)-y^\delta\|\ge\kappa\|y-y^\delta\|$ yields quantitative information only in terms of the exact norm, whereas upper estimates remain valid with $\delta$ replaced by any known bound $\bar\delta\ge\delta$.

For monotone problems, a natural regularization is the Lavrentiev method \cite{taut02,taut04}, which replaces \eqref{eq:main} by the singularly perturbed equation
\begin{equation}\label{eq:lav}
F(x)-y^\delta+\alpha(x-x_0)=0 ,
\end{equation}
with regularization parameter $\alpha>0$ and initial guess $x_0$ for $x^\dg$; unlike Tikhonov regularization \cite{binder,engl-hanke,engl-kunisch,jin-hou97,jin-hou99,scherzer}, it requires neither adjoints nor a variational formulation, which makes it attractive computationally; see \cite{hofmann,mahale-nair09,mahale-nair13,nair-taut,semenova} for its analysis under various parameter choices. In this paper we study the iterated (nonstationary) form of the method introduced by Bakushinsky and Smirnova \cite{bak-smir07}: starting from $x_0^\delta:=x_0$, one computes
\begin{equation}\label{eq:iter}
x_{k+1}^\delta = x_k^\delta - \bigl(A_k^\delta+\alpha_k I\bigr)^{-1}\bigl(F(x_k^\delta)-y^\delta+\alpha_k(x_k^\delta-x_0)\bigr),
\qquad A_k^\delta:=F'(x_k^\delta),
\end{equation}
where $(\alpha_k)$ is a prescribed sequence of regularization parameters satisfying
\begin{equation}\label{eq:alphas}
\alpha_k>0,\qquad 1\le\frac{\alpha_k}{\alpha_{k+1}}\le\mu,\qquad \lim_{k\to\infty}\alpha_k=0,
\end{equation}
for some $\mu>1$ (for instance, $\alpha_k=\alpha_0q^k$ with $q\in(0,1)$ and $\mu=1/q$). Each step of \eqref{eq:iter} requires only one linearization and one linear solve, so the method is simple to implement. Its regularizing effect, however, is controlled entirely by the stopping index: stopping too early leaves the approximation dominated by the regularization error, while iterating too long lets the iterates fit the noise and diverge from $x^\dg$ (semiconvergence). The central question addressed in this paper is therefore:
\begin{center}
\emph{How should the iteration \eqref{eq:iter} be stopped when the noise level $\delta$ is not known?}
\end{center}

\subsection{Stopping rules with and without the noise level}\label{subsec:rules}

The classical answers to the stopping problem all presuppose knowledge of $\delta$. Bakushinsky and Smirnova \cite{bak-smir07} used a generalized discrepancy principle \cite{bak-smir05,bak-smir06}, choosing the first index $k_\delta$ with
\begin{equation}\label{eq:gdp}
\|F(x_{k_\delta}^\delta)-y^\delta\|^2\le\tau\delta\le\|F(x_k^\delta)-y^\delta\|^2,\qquad 0\le k<k_\delta,
\end{equation}
for a constant $\tau>1$, and proved convergence $x_{k_\delta}^\delta\to x^\dg$ as $\delta\to0$ (without rates). Mahale and Nair \cite{mahale-nair09} instead stopped at the first index satisfying the weighted residual criterion
\begin{equation}\label{eq:mnrule}
\bigl\|\alpha_{k_\delta}\bigl(F'(x_{k_\delta}^\delta)+\alpha_{k_\delta}I\bigr)^{-1}\bigl(F(x_{k_\delta}^\delta)-y^\delta\bigr)\bigr\|\le\tau\delta,
\end{equation}
for some $\tau\ge1$, and obtained order-optimal error estimates, but under structural nonlinearity assumptions on $F$ that are, in general, stronger than a Lipschitz condition on $F'$. In \cite{mahale-singh}, Mahale and the present author showed that the analysis of \eqref{eq:iter} with the rule \eqref{eq:mnrule} can be carried out under monotonicity, Fr\'echet differentiability, and only a Lipschitz condition on $F'$.

Both \eqref{eq:gdp} and \eqref{eq:mnrule}, like essentially all a~priori and a~posteriori rules, take $\delta$ as input. In many applications the noise level is unknown, or only a rough (and possibly wrong) estimate is available, and an incorrect $\delta$ can severely degrade the reconstruction; our numerical experiments in Section~\ref{sec:numerics} illustrate this sensitivity. \emph{Heuristic} (noise-level-free) rules address this difficulty by selecting the stopping index from computable quantities alone. The idea goes back to the quasi-optimality principle \cite{tikhonov-glasko} and to Hanke and Raus \cite{hanke-raus}; see also \cite{hansen-oleary,jin-raus,wahba} and \cite[Chapter~4]{engl-hanke}. It is well known that no heuristic rule can yield a convergent regularization method for arbitrary noise (the Bakushinskii veto); a rigorous analysis is possible under a condition restricting the admissible noise, and we adopt the noise condition introduced by Jin and Wang \cite{jin-wang}, who analyzed a heuristic rule of this type for the iteratively regularized Gauss--Newton method.

The rule studied in this paper is of the same type, adapted to the Lavrentiev setting: we stop \eqref{eq:iter} at a minimizer $k_*$ of the computable functional
\[
\theta(k,y^\delta)=\frac{\|F(x_k^\delta)-y^\delta\|^2}{\alpha_k^2}
\]
(see Rule~\ref{rule:heuristic} in Section~\ref{sec:prelim} for the precise formulation and the computable noise surrogate $\delta_*:=\|F(x_{k_*}^\delta)-y^\delta\|$ appearing in our error estimates). Nothing in this rule requires $\delta$.

\subsection{What is known, and what this paper adds}\label{subsec:contrib}

The following table summarizes the state of the art for the iterated Lavrentiev method \eqref{eq:iter}; ``strong nonlinearity condition'' refers to structural range-invariance conditions of the type $(F'(x)-F'(u))v=F'(u)g(x,u,v)$ with $\|g(x,u,v)\|\le k_0\|v\|\|x-u\|$ used in \cite{mahale-nair09}, and ``Lipschitz condition'' to \eqref{eq:lipschitz} below.

\begin{center}
\begin{tabular}{lll}
\toprule
Reference & Stopping rule & Nonlinearity condition on $F$\\
\midrule
Bakushinsky--Smirnova \cite{bak-smir07} & discrepancy type \eqref{eq:gdp}, needs $\delta$ & Lipschitz (no rates)\\
Mahale--Nair \cite{mahale-nair09} & weighted rule \eqref{eq:mnrule}, needs $\delta$ & strong\\
Mahale--Singh \cite{mahale-singh} & weighted rule \eqref{eq:mnrule}, needs $\delta$ & Lipschitz\\
Mahale--Singh \cite{mahale-singh-etna} & heuristic, noise-level-free & strong\\
\textbf{this paper} & \textbf{heuristic, noise-level-free} & \textbf{Lipschitz (+ self-adjoint $F'$)}\\
\bottomrule
\end{tabular}
\end{center}

\smallskip
In other words, noise-level-free stopping for \eqref{eq:iter} has so far been justified only under the strong nonlinearity condition \cite{mahale-singh-etna}, while the Lipschitz condition has so far required knowledge of $\delta$ \cite{mahale-singh}. This paper narrows that gap: the heuristic rule is analyzed under monotonicity, Fr\'echet differentiability, self-adjointness of the derivative near the solution ($F$ locally a potential operator; Remark~\ref{rem:selfadjoint}), and a Lipschitz condition on $F'$, with no range-invariance or tangential cone conditions imposed. The self-adjointness requirement, needed by our spectral-calculus arguments and satisfied by all test operators in Section~\ref{sec:numerics}, marks the honest boundary of what is proved; its removal is posed as an open problem. Related error estimates by the author for a simplified iteratively regularized Gauss--Newton method under a general source condition can be found in \cite{mahale-singh-kumar}.

Specifically, the contributions of this paper are the following.
\begin{itemize}
\item[(i)] \emph{Well-definedness.} Under the noise condition of \cite{jin-wang} (Assumption~\ref{ass:noise}), the heuristic rule selects a finite index $k_*$, and the corresponding regularization parameter admits the lower bound $\alpha_{k_*}^2\ge c\,\delta/\|x_0-x^\dg\|$ with an explicit constant $c\in(0,1)$ (Lemma~\ref{lem:kstar-bound}).
\item[(ii)] \emph{Error estimate.} If the initial error satisfies the range condition $x_0-x^\dg\in\R(F'(x^\dg))$, then
$\|x_{k_*}^\delta-x^\dg\|\le C(\sqrt{\delta_*}+\sqrt{\delta})$ with the computable surrogate $\delta_*$ (Theorem~\ref{thm:rate-range}).
\item[(iii)] \emph{Interpretation of $\delta_*$, source conditions, and honest limitations.} We prove the two-sided bound $\kappa\delta\le\delta_*\le C\sqrt\delta$, yielding the baseline rate $O(\delta^{1/4})$ under the stated assumptions, and $O(\sqrt\delta)$ whenever $\delta_*=O(\delta)$, a regime not observed asymptotically in our experiments (Corollary~\ref{cor:deltastar}, Remark~\ref{rem:growing-horizon}); we derive the rates for H\"older source conditions with $p\ge1$, explain the resulting saturation, and identify $p<1$ as open (Theorem~\ref{thm:hoelder}, Remark~\ref{rem:plessone}). We also prove an intermediate estimate towards an oracle inequality and identify the two obstructions preventing its completion (Proposition~\ref{thm:oracle-pre}, Subsection~\ref{subsec:obstruction}).
\item[(iv)] \emph{Numerical validation.} We construct a general class of genuinely monotone test operators $F(x)=A^*\varphi(Ax)$, which are valid for arbitrary bounded smoothing operators $A$ (in particular non-self-adjoint ones), and compare, on one-, two-, and three-dimensional examples, the heuristic rule with the discrepancy principle and with the weighted rule \eqref{eq:mnrule}, documenting the strengths and limitations of each (Section~\ref{sec:numerics}).
\end{itemize}

\subsection{Outline and notation}\label{subsec:outline}

Section~\ref{sec:prelim} formulates the heuristic rule precisely and collects the assumptions. Section~\ref{sec:wd} establishes well-definedness of the iteration and of the rule and proves the error estimate of order $O(\sqrt{\delta_*}+\sqrt{\delta})$ under the range condition; Subsection~\ref{subsec:noisefree} treats the noise-free iteration and provides a stability estimate. Section~\ref{sec:rates} interprets the surrogate $\delta_*$, derives the rates under H\"older source conditions with $p\ge1$, and documents the obstructions to a full oracle inequality. Section~\ref{sec:numerics} presents the numerical experiments, and Section~\ref{sec:concl} concludes. Throughout, $x^\dg$ denotes a solution of \eqref{eq:main}, $e_0:=x_0-x^\dg$ the initial error, $A:=F'(x^\dg)$, $A_k^\delta:=F'(x_k^\delta)$, and $\N(\cdot)$, $\R(\cdot)$ the null space and range of an operator; further notation is introduced in Section~\ref{sec:prelim}.

\section{Preliminaries, assumptions, and the heuristic rule}\label{sec:prelim}

Given noisy data $y^\delta$, let $(x_k^\delta)$ be generated by \eqref{eq:iter} and define
\begin{equation}\label{eq:kinfty}
k_\infty:=k_\infty(y^\delta):=\max\{k:\ x_l^\delta\in\D(F)\ \text{for all } 0\le l\le k\},
\end{equation}
i.e., $k_\infty$ is the largest index for which all iterates generated by \eqref{eq:iter} remain in $\D(F)$; it may be finite or infinite. We can now formulate the stopping rule studied in this paper.

\begin{rulenv}[Heuristic parameter-choice rule]\label{rule:heuristic}
Fix a finite computational horizon $K\in\mathbb{N}$, compute the iterates $x_0^\delta,\dots,x_{K_\infty}^\delta$ with $K_\infty:=\min\{K,k_\infty\}$, define
\begin{equation}\label{eq:theta}
\theta(k,y^\delta):=\frac{\|F(x_k^\delta)-y^\delta\|^2}{\alpha_k^2},\qquad 0\le k\le K_\infty,
\end{equation}
and choose the stopping index $k_*=k_*(y^\delta)$ such that
\begin{equation}\label{eq:kstar}
k_*\in\arg\min\bigl\{\theta(k,y^\delta):\ k=0,1,\dots,K_\infty\bigr\}.
\end{equation}
The output of the method is $x_{k_*}^\delta$. We also set
\begin{equation}\label{eq:deltastar}
\delta_*:=\|F(x_{k_*}^\delta)-y^\delta\|,
\end{equation}
which serves as a computable surrogate for the (unknown) noise level and appears in the error estimates below.
\end{rulenv}

\begin{remark}
Rule~\ref{rule:heuristic} differs from the rule of Jin and Wang \cite{jin-wang} in the power of $\alpha_k$ appearing in the denominator of \eqref{eq:theta}; this normalization is adapted to the Lavrentiev setting and to the a~priori choice \eqref{eq:khat} used in the analysis below.
\end{remark}

\begin{remark}[Implementability]\label{rem:retrospective}
Rule~\ref{rule:heuristic} is a \emph{retrospective} parameter-choice rule, not an online stopping criterion: the trajectory is computed up to the horizon $K$, understood as a \emph{computational budget}, the values $\theta(k,y^\delta)$ are recorded, and the minimizer over the window is selected. The results below are conditional on the window containing the relevant comparison indices: Lemma~\ref{lem:kstar-bound} and Theorem~\ref{thm:rate-range} require $K_\infty\ge\hat k$ from \eqref{eq:khat}, and Corollary~\ref{cor:deltastar} requires $K_\infty\ge k_*^\delta$ from \eqref{eq:kdstar}. These conditions are not certifiable from the data alone: $\hat k$ and $k_*^\delta$ grow as $\delta$ decreases, so a known upper bound $\bar\delta\ge\delta$ does not yield a sufficient horizon, while a positive lower bound on $\delta$ would, but is generally unavailable. For the geometric choice $\alpha_k=\alpha_0q^k$, $q\in(0,1)$, both indices grow only like $\log(1/\delta)$, so moderate budgets suffice in practice; the general condition \eqref{eq:alphas} alone does not imply such growth (it admits $\alpha_k=\alpha_0/(k+1)$). As a practical, though not certified, indicator, Assumption~\ref{ass:noise} forces $\theta(k,y^\delta)\ge\kappa^2\delta^2/\alpha_k^2\to\infty$, so a sustained increase of $\theta$ past its running minimum suggests that the minimizer has been passed; in the experiments of Section~\ref{sec:numerics}, $\theta$ was strictly increasing over the final third of the window in 1D and 2D, asymptotically constant in 3D (Appendix~\ref{app:horizon}), and enlarging $K$ changed $k_*$ in no run. A proved first-local-minimum or finite-look-ahead criterion is not claimed.
\end{remark}

As is well known, no heuristic rule can lead to a convergent regularization method for all admissible noise (this is the content of the Bakushinskii veto; see \cite{engl-hanke}). Following \cite{jin-wang}, we therefore restrict the class of admissible noise by the following condition.

\begin{assumption}\label{ass:noise}
There exists a constant $\kappa$ with $0<\kappa<1$ such that
\begin{equation}\label{eq:noisecond}
\|y^\delta-\tilde y\|\ \ge\ \kappa\,\|y^\delta-y\|
\end{equation}
for every $\tilde y\in\{F(x):\ x\in S(y^\delta)\}$, where $S(y^\delta):=\{x_k^\delta:0\le k\le k_\infty\}$ and $(x_k^\delta)$ is generated by \eqref{eq:iter} with data $y^\delta$.
\end{assumption}

Assumption~\ref{ass:noise} expresses that a certain portion of the data error cannot be attained by the computed approximations; it was used in \cite{jin-wang} in the error analysis of the iteratively regularized Gauss--Newton method. The following lemma establishes the well-definedness of Rule~\ref{rule:heuristic}.

\begin{lemma}\label{lem:wd-rule}
(i) A minimizer $k_*$ satisfying \eqref{eq:kstar} always exists, since the minimization in Rule~\ref{rule:heuristic} extends over the finite computed window $\{0,\dots,K_\infty\}$.

(ii) Suppose in addition that Assumption~\ref{ass:noise} holds and $y^\delta\ne y$. Then even the infinite-horizon minimum $\inf_{0\le k\le k_\infty}\theta(k,y^\delta)$ is attained at some finite index.
\end{lemma}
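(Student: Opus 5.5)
Part (i) is immediate. The window $\{0,\dots,K_\infty\}$ is a nonempty finite set, because $K_\infty=\min\{K,k_\infty\}\ge0$ and $x_0^\delta=x_0\in\D(F)$. On it, $k\mapsto\theta(k,y^\delta)$ is a real-valued function: every $x_k^\delta$ in the window lies in $\D(F)$, so $F(x_k^\delta)$ is defined, and $\alpha_k>0$ by \eqref{eq:alphas}. A real function on a finite nonempty set attains its minimum, which gives $k_*$ as in \eqref{eq:kstar}. Any tie-breaking convention, for instance the smallest minimizer, makes $k_*$ a well-defined function of $y^\delta$.

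For part (ii), if $k_\infty<\infty$ the index set $\{0,\dots,k_\infty\}$ is finite and the argument of (i) applies verbatim. So the only real case is $k_\infty=\infty$. Here I would use Assumption~\ref{ass:noise} with $\tilde y=F(x_k^\delta)$, which is admissible because $x_k^\delta\in S(y^\delta)$. This gives
\[
\theta(k,y^\delta)=\frac{\|F(x_k^\delta)-y^\delta\|^2}{\alpha_k^2}\ \ge\ \frac{\kappa^2\delta^2}{\alpha_k^2}\qquad\text{for all }k\ge0 .
\]
Since $y^\delta\ne y$, we have $\delta>0$. Since $\alpha_k\to0$ by \eqref{eq:alphas}, the right-hand side tends to $+\infty$. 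Hence $\theta(k,y^\delta)\to\infty$ as $k\to\infty$.

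Next I compare with the value at $k=0$, which is finite: $\theta(0,y^\delta)=\|F(x_0)-y^\delta\|^2/\alpha_0^2<\infty$. Choose $N$ such that $\kappa^2\delta^2/\alpha_k^2>\theta(0,y^\delta)$ for all $k>N$; such an $N$ exists by the divergence just shown. Then
\[
\inf_{0\le k<\infty}\theta(k,y^\delta)=\min_{0\le k\le N}\theta(k,y^\delta),
\]
and the right-hand side is a minimum over a finite set, so it is attained at some index $k\le N$. The argument uses only $\delta>0$ and $\alpha_k\to0$; the monotonicity $\alpha_k\ge\alpha_{k+1}$ is not needed. The only delicate point is checking that the noise condition applies to every iterate, including $k=0$, and that is built into the definition of $S(y^\delta)$.

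As a by-product, $N$ can be made explicit: the minimizer satisfies $\alpha_{k}\ge\kappa\delta/\sqrt{\theta(0,y^\delta)}=\kappa\delta\alpha_0/\|F(x_0)-y^\delta\|$. I expect this crude lower bound on $\alpha_{k_*}$ to be the starting point that Lemma~\ref{lem:kstar-bound} later sharpens.
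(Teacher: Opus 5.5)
Your proposal is correct and follows the paper's argument. When $k_\infty=\infty$, the noise condition gives $\theta(k,y^\delta)\ge\kappa^2\delta^2/\alpha_k^2\to\infty$, so the infimum is attained at a finite index; your explicit comparison with $\theta(0,y^\delta)$ just spells out the step the paper states as ``hence $\theta(\cdot,y^\delta)$ attains its minimum,'' and your by-product lower bound on $\alpha_{k_*}$ is also correct.
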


\begin{proof}
Part (i) is trivial. For (ii), let $k_\infty=\infty$ (otherwise there is nothing to prove). By Assumption~\ref{ass:noise} and \eqref{eq:alphas},
\[
\theta(k,y^\delta)=\frac{\|F(x_k^\delta)-y^\delta\|^2}{\alpha_k^2}\ \ge\ \frac{\kappa^2\|y-y^\delta\|^2}{\alpha_k^2}\ \longrightarrow\ \infty
\qquad\text{as } k\to\infty,
\]
since $\|y-y^\delta\|>0$ and $\alpha_k\to0$. Hence $\theta(\cdot,y^\delta)$ attains its minimum at some finite index.

We emphasize that (ii) is a statement about the infinite sequence: it guarantees that a sufficiently large window captures the infinite-horizon minimizer, but it does not provide a computable bound for how large the window must be; see Remark~\ref{rem:retrospective}.
\end{proof}

We next state the nonlinearity conditions used throughout the paper.

\begin{assumption}\label{ass:nonlinearity}
\begin{itemize}
\item[(i)] $F$ is monotone on $\D(F)$.
\item[(ii)] There exists $r>0$ such that $B_r(x^\dg):=\{x\in\X:\|x-x^\dg\|<r\}\subseteq\D(F)$ and $F$ is Fr\'echet differentiable on $B_r(x^\dg)$. Moreover, $F$ is properly scaled in the sense that
\begin{equation}\label{eq:scaling}
\|F'(x)\|\le\alpha_0\qquad\text{for all } x\in B_r(x^\dg).
\end{equation}
\item[(iii)] There exists $L>0$ such that
\begin{equation}\label{eq:lipschitz}
\|F'(x)-F'(z)\|\le L\|x-z\|\qquad\text{for all } x,z\in B_r(x^\dg).
\end{equation}
\item[(iv)] For every $x\in B_r(x^\dg)$ the Fr\'echet derivative is self-adjoint, $F'(x)=F'(x)^*$; by monotonicity of $F$ it is then automatically nonnegative, $F'(x)\ge0$.
\end{itemize}
\end{assumption}

\begin{remark}\label{rem:selfadjoint}
Condition (iv) deserves comment, since it restricts the class of monotone operators covered by our analysis. (a) Operators with symmetric derivative are exactly the (local) \emph{potential operators}: if $F'$ is symmetric on the ball $B_r(x^\dg)$, then $F$ coincides there with the gradient of the convex functional $J(x)=\int_0^1\ip{F(x^\dg+t(x-x^\dg))}{x-x^\dg}\,dt$ (up to a constant); conversely, gradients of convex $C^2$-functionals are monotone with self-adjoint nonnegative derivatives. (b) The general class of test operators $F(x)=A^*\varphi(Ax)$ constructed in Section~\ref{sec:numerics} satisfies (iv) for arbitrary bounded linear $A$, since $F'(x)=A^*\varphi'(Ax)A$ is always self-adjoint and nonnegative, even when $A$ itself is non-self-adjoint. (c) Condition (iv) is used in the proofs of Proposition~\ref{prop:DE} and Lemma~\ref{lem:Bk-noisy} (and thus in Theorem~\ref{thm:rate-range}), where spectral-calculus arguments such as $B_k\le\mu B_{k+1}$ and the symmetric rearrangement of an inner product are employed; these arguments are not available for general non-normal accretive derivatives. Whether the results of this paper persist for monotone operators with genuinely non-self-adjoint derivatives is an open question.
\end{remark}

Condition \eqref{eq:lipschitz} is the Lipschitz condition referred to in the title, with Lipschitz constant $L$. We emphasize that, in addition, monotonicity and Fr\'echet differentiability of $F$ are assumed throughout; by Assumption~\ref{ass:nonlinearity}(i) and (iv), $F'(x)$ is a self-adjoint nonnegative operator, so that $(F'(x)+\alpha I)^{-1}$ exists for every $\alpha>0$ and, by the spectral theorem,
\begin{equation}\label{eq:resolvent}
\|(F'(x)+\alpha I)^{-1}\|\le\frac1\alpha,\qquad \|(F'(x)+\alpha I)^{-1}F'(x)\|\le1,\qquad x\in B_r(x^\dg).
\end{equation}
The scaling condition \eqref{eq:scaling} is a normalization: replacing $F$, $y$, $y^\delta$ and $\delta$ by $cF$, $cy$, $cy^\delta$ and $c\delta$ for a suitable constant $c>0$ (which preserves monotonicity), one may always achieve \eqref{eq:scaling}.

For the convergence rate analysis we shall use source conditions of H\"older type.

\begin{assumption}\label{ass:source}
There exist $p>0$ and $v\in\N(A)^\perp$, where $A:=F'(x^\dg)$ and $\N(A)$ denotes the null space of $A$, such that
\begin{equation}\label{eq:source}
x_0-x^\dg=A^p v .
\end{equation}
For non-integer $p\in(0,1)$ the fractional power $A^p$ of the self-adjoint nonnegative operator $A$ (Assumption~\ref{ass:nonlinearity}(iv)) is defined by the spectral theorem, $A^p=\int\lambda^p\,dE_\lambda$; equivalently, by the Balakrishnan formula
\[
A^p v:=\frac{\sin(p\pi)}{\pi}\int_0^\infty s^{p-1}(A+sI)^{-1}Av\,ds ,
\]
and for general $p>0$ by composition with integer powers of $A$.
\end{assumption}

\begin{remark}\label{rem:p-usage}
We record explicitly which value of $p$ is used where. The analysis of Sections~\ref{sec:wd} and~\ref{sec:rates} is carried out under the \emph{range condition}
\begin{equation}\label{eq:range}
x_0-x^\dg=F'(x^\dg)v\quad\text{for some } v\in\N(F'(x^\dg))^\perp,
\end{equation}
i.e., under Assumption~\ref{ass:source} with $p=1$. Convergence rates under Assumption~\ref{ass:source} with $p\ge1$ are discussed in Theorem~\ref{thm:hoelder}; the case $p\in(0,1)$ is not covered by the present analysis, see Remark~\ref{rem:plessone}.
\end{remark}

For convenience we introduce the notation
\begin{gather*}
r_\alpha(\lambda)=\alpha(\alpha+\lambda)^{-1},\qquad
g_\alpha(\lambda)=(\alpha+\lambda)^{-1},\qquad
e_k^\delta=x_k^\delta-x^\dg,\\
e_0=x_0-x^\dg,\qquad
A=F'(x^\dg),\qquad A_k^\delta=F'(x_k^\delta).
\end{gather*}

\section{Well-definedness of the method and error estimates}\label{sec:wd}

In this section we show that the iteration \eqref{eq:iter} and Rule~\ref{rule:heuristic} are well defined and derive error estimates under the range condition \eqref{eq:range}. We first recall a standard consequence of the Lipschitz condition \eqref{eq:lipschitz}.

\begin{proposition}[{\cite[Chapter~10]{engl-hanke}}]\label{prop:taylor}
Let $x,u\in B_r(x^\dg)$. Then
\begin{equation}\label{eq:taylor}
\|F(x)-F(u)-F'(u)(x-u)\|\le\frac{L}{2}\,\|x-u\|^2 .
\end{equation}
\end{proposition}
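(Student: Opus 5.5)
The plan is to derive \eqref{eq:taylor} from the integral form of the mean value theorem along the segment from $u$ to $x$. The first step is to check that this segment lies in the domain where the Lipschitz condition holds. Since $B_r(x^\dg)$ is a ball, it is convex. Hence for $x,u\in B_r(x^\dg)$ every point $u+t(x-u)$ with $t\in[0,1]$ also lies in $B_r(x^\dg)$. Consequently $F$ is Fr\'echet differentiable along the whole segment by Assumption~\ref{ass:nonlinearity}(ii), and \eqref{eq:lipschitz} is available there.

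The second step is to write $F(x)-F(u)$ as an integral. Set $\phi(t):=F(u+t(x-u))$. By the chain rule, $\phi'(t)=F'(u+t(x-u))(x-u)$. This is continuous in $t$, because $F'$ is Lipschitz, hence continuous, on the ball. The fundamental theorem of calculus for Bochner or Riemann integrals of continuous Hilbert-space-valued functions then gives
\[
F(x)-F(u)=\int_0^1 F'\bigl(u+t(x-u)\bigr)(x-u)\,dt .
\]
If one prefers to avoid vector-valued integration, one can instead test against an arbitrary unit vector $w$. Apply the scalar mean value theorem, or the scalar fundamental theorem of calculus, to $t\mapsto\ip{\phi(t)}{w}$, and take the supremum over $w$ at the end.

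The third step subtracts $F'(u)(x-u)=\int_0^1F'(u)(x-u)\,dt$ and estimates the norm:
\[
\|F(x)-F(u)-F'(u)(x-u)\|\le\int_0^1\bigl\|F'(u+t(x-u))-F'(u)\bigr\|\,\|x-u\|\,dt\le\int_0^1 Lt\|x-u\|^2\,dt=\frac L2\|x-u\|^2 ,
\]
where the middle inequality uses \eqref{eq:lipschitz} with the distance $\|t(x-u)\|=t\|x-u\|$.

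There is no substantive obstacle in this argument. The only point needing care is to justify the integral representation, which requires continuity of $t\mapsto F'(u+t(x-u))(x-u)$ and convexity of the ball. The weak formulation via the unit vector $w$ handles this cleanly.
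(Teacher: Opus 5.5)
Your proof is correct and is the standard integral mean-value argument that the paper relies on: the paper gives no proof of its own and only cites Chapter~10 of the Engl--Hanke--Neubauer textbook, where this argument is used. One small caveat on your weak alternative: use the scalar fundamental theorem of calculus there, because applying the scalar mean value theorem directly to $t\mapsto\langle\phi(t),w\rangle$ yields only the constant $L$ instead of $L/2$ (unless you compare $g(t)=\langle\phi(t)-\phi(0)-tF'(u)(x-u),w\rangle$ with $\frac{L}{2}t^2\|x-u\|^2$ through a monotonicity argument).
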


The next technical lemma will be used repeatedly.

\begin{lemma}[{\cite{blaschke}}]\label{lem:recurrence}
Let $a,b,c>0$ satisfy $4ac\le(1-b)^2$ and set
\[
\theta:=\frac{(1-b)-\sqrt{(1-b)^2-4ac}}{2c}.
\]
If $(\theta_k)$ is a sequence of positive reals with
$\theta_{k+1}\le a+b\theta_k+c\theta_k^2$ and $\theta_0\le\theta$,
then $\theta_k\le\theta$ for all $k\ge0$.
\end{lemma}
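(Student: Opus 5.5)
Writing $\theta^\ast$ for the quantity $\frac{(1-b)-\sqrt{(1-b)^2-4ac}}{2c}$ (to keep it apart from the sequence $\theta_k$), the argument is an induction on $k$. The base case $\theta_0\le\theta^\ast$ is the hypothesis. For the inductive step, assume $\theta_k\le\theta^\ast$. The map $\phi(t):=a+bt+ct^2$ is increasing on $[0,\infty)$ because $b,c>0$, and $\theta_k$ is positive. Hence the recurrence gives
\[
\theta_{k+1}\le\phi(\theta_k)\le\phi(\theta^\ast).
\]
It therefore suffices to prove $\phi(\theta^\ast)\le\theta^\ast$, which is equivalent to $c(\theta^\ast)^2-(1-b)\theta^\ast+a\le0$.

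To get this, observe that $\theta^\ast$ is a root of the quadratic $q(t):=ct^2-(1-b)t+a$. The discriminant of $q$ is $(1-b)^2-4ac$, which is nonnegative by assumption, so the roots are real. Moreover, $\theta^\ast$ is exactly the smaller root given by the quadratic formula. Thus $q(\theta^\ast)=0$, that is, $\phi(\theta^\ast)=\theta^\ast$, and so $\theta_{k+1}\le\theta^\ast$, which completes the induction.

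There is one point to check. The hypotheses say $4ac\le(1-b)^2$ but do not require $b<1$. If $b>1$, then $1-b<0$, so $\theta^\ast<0$ (numerator negative, denominator $2c>0$). In that case the assumption $\theta_0\le\theta^\ast$ contradicts the positivity of $\theta_0$, and the statement holds vacuously. The same conclusion is reached if $b=1$: then $4ac\le0$ is impossible for $a,c>0$. So the only nontrivial case is $b<1$, where $\theta^\ast>0$ and the monotonicity argument above applies directly. No step is a real obstacle. The only care needed is to use the monotonicity of $\phi$ on the positive half-line, which is where the assumptions $b,c>0$ and $\theta_k>0$ enter.
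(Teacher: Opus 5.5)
Your proof is correct: $\theta$ is the smaller root of $ct^2-(1-b)t+a$, hence a fixed point of $t\mapsto a+bt+ct^2$, and since this map is increasing on $[0,\infty)$ the induction closes. The paper gives no proof of its own and only cites the reference, where this is the standard argument; your case distinction on $b$ is harmless but unnecessary, since $0<\theta_0\le\theta$ already forces $\theta>0$.
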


For the analysis we fix, assuming $\alpha_0>\delta/\|e_0\|$, an integer $\hat k$ such that
\begin{equation}\label{eq:khat}
\alpha_{\hat k}^2\ \le\ \frac{\delta}{\|e_0\|}\ <\ \alpha_k^2,\qquad 0\le k<\hat k .
\end{equation}

\begin{lemma}\label{lem:wd-iter}
Let \eqref{eq:alphas}, \eqref{eq:khat}, Assumption~\ref{ass:nonlinearity} and the range condition \eqref{eq:range} hold. Assume furthermore that
\[
L\mu\|v\|+\mu\sqrt{2L\|v\|(1+\alpha_0)}\ \le\ 1
\]
and that
\begin{equation}\label{eq:smallness1}
\|e_0\|\Bigl((1-L\mu\|v\|)+\sqrt{(1-L\mu\|v\|)^2-2L\mu^2\|v\|(1+\alpha_0)}\Bigr)\ \le\ 2\alpha_0\mu\|v\|(1+\alpha_0).
\end{equation}
Then for all $0\le k\le\hat k$,
\begin{equation}\label{eq:errbound1}
\|x_k^\delta-x^\dg\|\ \le\ C\,\alpha_k\|v\|,\qquad
C:=\frac{2\mu(1+\alpha_0)}{1-L\mu\|v\|}.
\end{equation}
Moreover, if $r>C\alpha_0\|v\|$, then $x_k^\delta\in B_r(x^\dg)$ for all $0\le k\le\hat k$ and
\begin{equation}\label{eq:residual-khat}
\|F(x_{\hat k}^\delta)-y^\delta\|\ \le\ \tau\,\delta,
\end{equation}
where
\[
\tau:=(1+CL\|v\|)+\Bigl((1+C_1)\mu^2+\frac{L}{2}C^2\|v\|\Bigr)\frac{\|v\|}{\|e_0\|},
\]
\[
C_1:=\Bigl(2C+\frac{C^2}{2}\Bigr)L\|v\|+\Bigl(C^2+\frac{C^3}{2}\Bigr)L^2\|v\|^2 .
\]
\end{lemma}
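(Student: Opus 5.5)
The bound \eqref{eq:errbound1} will be proved by induction on $k$, applying Lemma~\ref{lem:recurrence} to the normalized errors $\theta_k:=\|e_k^\delta\|/(\alpha_k\|v\|)$. Writing $F(x_k^\delta)-y^\delta=F(x_k^\delta)-F(x^\dg)-(y^\delta-y)$, the iteration \eqref{eq:iter} gives the error recursion
\[
e_{k+1}^\delta=(A_k^\delta+\alpha_kI)^{-1}\Bigl[\alpha_k e_0 - \bigl(F(x_k^\delta)-F(x^\dg)-A_k^\delta e_k^\delta\bigr)+(y^\delta-y)\Bigr].
\]
The three terms are handled as follows.
\begin{itemize}
\item[(a)] \emph{Linearization term.} Proposition~\ref{prop:taylor} with $u=x_k^\delta$, $x=x^\dg$, together with \eqref{eq:resolvent}, bounds it by $\frac{L}{2\alpha_k}\|e_k^\delta\|^2$.
\item[(b)] \emph{Noise term.} By \eqref{eq:resolvent} it is at most $\delta/\alpha_k$. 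For $k<\hat k$, \eqref{eq:khat} gives $\delta<\alpha_k^2\|e_0\|$, so this is at most $\alpha_k\|e_0\|\le\alpha_k\alpha_0\|v\|$, using $\|e_0\|=\|Av\|\le\alpha_0\|v\|$ from \eqref{eq:scaling}.
\item[(c)] \emph{Source term.} By the range condition \eqref{eq:range}, $\alpha_k e_0=\alpha_k A v$. Write $A=A_k^\delta+(A-A_k^\delta)$. The part $\alpha_k(A_k^\delta+\alpha_kI)^{-1}A_k^\delta v$ is bounded by $\alpha_k\|v\|$. The part $\alpha_k(A_k^\delta+\alpha_kI)^{-1}(A-A_k^\delta)v$ is bounded by $L\|e_k^\delta\|\|v\|$ via \eqref{eq:lipschitz}.
\end{itemize}

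Collecting (a)--(c),
\[
\|e_{k+1}^\delta\|\le \alpha_k\|v\|(1+\alpha_0)+L\|v\|\|e_k^\delta\|+\frac{L}{2\alpha_k}\|e_k^\delta\|^2 .
\]
Dividing by $\alpha_{k+1}\|v\|$ and using $\alpha_k/\alpha_{k+1}\le\mu$ from \eqref{eq:alphas} yields
\[
\theta_{k+1}\le a+b\theta_k+c\theta_k^2,\qquad a=\mu(1+\alpha_0),\quad b=L\mu\|v\|,\quad c=\tfrac{L}{2}\mu\|v\| .
\]
The first hypothesis of the lemma is then exactly $4ac\le(1-b)^2$. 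The smallness condition \eqref{eq:smallness1}, after rationalizing the root $\theta$ of Lemma~\ref{lem:recurrence} as $2a/\bigl((1-b)+\sqrt{(1-b)^2-4ac}\bigr)$, is exactly $\theta_0=\|e_0\|/(\alpha_0\|v\|)\le\theta$. Finally, $\theta\le 2a/(1-b)=C$, which gives \eqref{eq:errbound1} for all $0\le k\le\hat k$ (the step from $\hat k-1$ to $\hat k$ still uses only the bound at $k=\hat k-1<\hat k$).

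The induction must simultaneously ensure that every iterate stays in $B_r(x^\dg)$, so that \eqref{eq:resolvent}, \eqref{eq:lipschitz} and \eqref{eq:taylor} are applicable at the next step. This holds because $\|e_k^\delta\|\le C\alpha_k\|v\|\le C\alpha_0\|v\|<r$ by the assumption $r>C\alpha_0\|v\|$.

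\emph{Residual bound \eqref{eq:residual-khat}.} Write
\[
F(x_{\hat k}^\delta)-y^\delta=\bigl[F(x_{\hat k}^\delta)-F(x^\dg)-Ae_{\hat k}^\delta\bigr]+Ae_{\hat k}^\delta+(y-y^\delta).
\]
The first bracket is bounded by $\frac{L}{2}C^2\alpha_{\hat k}^2\|v\|^2$, and the last term contributes $\delta$. The main obstacle is $\|Ae_{\hat k}^\delta\|$: a crude bound gives only $\alpha_0 C\alpha_{\hat k}\|v\|$, which is not $O(\alpha_{\hat k}^2)$. To get the extra factor of $\alpha_{\hat k}$, I will instead apply $A$ to the error recursion at step $\hat k-1$ and use $A(A_{k}^\delta+\alpha_kI)^{-1}=I-\alpha_k(A_k^\delta+\alpha_kI)^{-1}+(A-A_k^\delta)(A_k^\delta+\alpha_kI)^{-1}$. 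Then $\alpha_k A(\cdot+\alpha_k)^{-1}Av$ gives $\alpha_k^2\|v\|$ plus Lipschitz corrections of size $L C\alpha_k^2\|v\|^2$. The noise piece gives $\delta(1+CL\|v\|)$. The Taylor remainder gives terms of order $\alpha_k^2(C^2/2)L\|v\|^2$ with higher corrections, and these produce $C_1$. Converting $\alpha_{\hat k-1}^2\le\mu^2\alpha_{\hat k}^2\le\mu^2\delta/\|e_0\|$ via \eqref{eq:khat}, and likewise $\alpha_{\hat k}^2\le\delta/\|e_0\|$, gives all the terms in $\tau$. Tracking these second-order cross terms so that they match $C_1$ is where I expect most of the care is needed.
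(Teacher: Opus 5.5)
Your induction for \eqref{eq:errbound1} is correct and is essentially the paper's argument. Splitting $Av=A_k^\delta v+(A-A_k^\delta)v$ inside the resolvent replaces the paper's resolvent-difference term \eqref{eq:middle-term} and gives the same bound $\alpha_k\|v\|+L\|e_k^\delta\|\,\|v\|$. You are also right that $x_k^\delta\in B_r(x^\dg)$ must be carried along in the induction, so this part genuinely needs $r>C\alpha_0\|v\|$, which the statement only imposes afterwards.

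The residual part has a step that fails, and the gap cannot be closed, because \eqref{eq:residual-khat} is false as stated. In your identity for $A(A_k^\delta+\alpha_kI)^{-1}$, the term $I$ contributes $\alpha_kAv=\alpha_ke_0$ to $Ae_{k+1}^\delta$. Nothing cancels it: only the $I$-part acting on $y^\delta-y$ cancels against $y-y^\delta$. Together with $-\alpha_k^2(A_k^\delta+\alpha_kI)^{-1}Av$ it forms $\alpha_kA_k^\delta(\alpha_kI+A_k^\delta)^{-1}Av$. In the linear case $A_k^\delta=A$ its spectral function is $\alpha_k\lambda^2/(\alpha_k+\lambda)\approx\alpha_k\lambda$ for $\lambda\gg\alpha_k$, so the best bound is $\alpha_k\|Av\|=\alpha_k\|e_0\|$. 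Lavrentiev's residual saturates at order $\alpha$: the ``crude'' $O(\alpha_{\hat k})$ you wanted to beat is the true order, and no tracking of cross terms will produce $C_1$.

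Concretely, let $X=\mathbb{R}$, $F(x)=x$, $x^\dg=0$, $y=0$, $y^\delta=\delta\in(0,1)$, $x_0=1$, $v=1$, $\alpha_0=1$, $\alpha_k=2^{-k}$ (so $\mu=2$), $r$ large, and $L>0$ small (admissible since $F'\equiv1$). All hypotheses hold, and $\tau\to1+\mu^2=5$ as $L\to0$. Since $x_{k+1}^\delta=(\delta+\alpha_k)/(1+\alpha_k)$, \eqref{eq:khat} gives
\[
F(x_{\hat k}^\delta)-y^\delta=\frac{\alpha_{\hat k-1}(1-\delta)}{1+\alpha_{\hat k-1}}>\frac{\sqrt\delta\,(1-\delta)}{1+\sqrt\delta},
\]
which is not bounded by $\tau\delta$ as $\delta\to0$. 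What you can prove is $\|F(x_{\hat k}^\delta)-y^\delta\|\le(1+CL\|v\|)\delta+\mu\alpha_{\hat k}\|e_0\|+O(\alpha_{\hat k}^2)$, i.e.\ $O(\sqrt{\delta\|e_0\|})$.

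The paper's own proof breaks at the same place. Its estimate $\|I_1\|\le\alpha_k^2\|v\|+\alpha_kL\|e_k^\delta\|\,\|v\|$ for $I_1=A_k^\delta r_{\alpha_k}(A_k^\delta)Av$ does not follow from $\|A_k^\delta r_{\alpha_k}(A_k^\delta)\|\le\alpha_k$, which only gives $\alpha_k\|Av\|$. An $O(\alpha_k^2)$ bound of the kind you had in mind does hold for the \emph{weighted} residual of \eqref{eq:mnrule}. In the linear case the extra factor $r_{\alpha_k}$ bounds the source part there by $\mu\alpha_k^2\|v\|$. It does not hold for the plain residual entering $\theta$.
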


\begin{proof}
We argue by induction on $k$. For $k=0$, \eqref{eq:errbound1} follows from \eqref{eq:smallness1}. Assume \eqref{eq:errbound1} holds for some $0\le k<\hat k$. From \eqref{eq:iter},
\begin{align}
x_{k+1}^\delta-x^\dg
&=(\alpha_kI+A_k^\delta)^{-1}\bigl\{(\alpha_kI+A_k^\delta)e_k^\delta-\bigl(F(x_k^\delta)-y^\delta+\alpha_k(x_k^\delta-x_0)\bigr)\bigr\}\notag\\
&=(\alpha_kI+A_k^\delta)^{-1}\bigl\{\alpha_k e_0-\bigl(F(x_k^\delta)-y-A_k^\delta e_k^\delta\bigr)+(y^\delta-y)\bigr\}\notag\\
&=\alpha_k(\alpha_kI+A)^{-1}e_0
+\bigl(\alpha_k(\alpha_kI+A_k^\delta)^{-1}-\alpha_k(\alpha_kI+A)^{-1}\bigr)e_0\notag\\
&\qquad-(\alpha_kI+A_k^\delta)^{-1}\bigl(F(x_k^\delta)-y-A_k^\delta e_k^\delta\bigr)
+(\alpha_kI+A_k^\delta)^{-1}(y^\delta-y).\label{eq:decomp}
\end{align}
Using the identity
$\alpha_k(\alpha_kI+A_k^\delta)^{-1}-\alpha_k(\alpha_kI+A)^{-1}
=\alpha_k(\alpha_kI+A_k^\delta)^{-1}(A-A_k^\delta)(\alpha_kI+A)^{-1}$,
the range condition $e_0=Av$, \eqref{eq:resolvent} and \eqref{eq:lipschitz},
\begin{equation}\label{eq:middle-term}
\bigl\|\bigl(\alpha_k(\alpha_kI+A_k^\delta)^{-1}-\alpha_k(\alpha_kI+A)^{-1}\bigr)e_0\bigr\|
\le L\|e_k^\delta\|\,\|v\| .
\end{equation}
Taking norms in \eqref{eq:decomp} and using \eqref{eq:resolvent}, Proposition~\ref{prop:taylor}, \eqref{eq:middle-term} and \eqref{eq:khat},
\begin{align}
\|e_{k+1}^\delta\|
&\le\alpha_k\|v\|+L\|e_k^\delta\|\|v\|+\frac{L\|e_k^\delta\|^2}{2\alpha_k}+\frac{\delta}{\alpha_k}\notag\\
&\le\alpha_k\|v\|+\alpha_k\|e_0\|+\frac{L\|e_k^\delta\|^2}{2\alpha_k}+L\|e_k^\delta\|\|v\| ,\label{eq:recur-raw}
\end{align}
where we used $\delta/\alpha_k\le\alpha_k\|e_0\|$ for $k<\hat k$. Dividing \eqref{eq:recur-raw} by $\alpha_{k+1}$, using \eqref{eq:alphas} and the bound $\|e_0\|=\|Av\|\le\alpha_0\|v\|$ from \eqref{eq:scaling},
\[
\frac{\|e_{k+1}^\delta\|}{\alpha_{k+1}}
\le\mu\|v\|(1+\alpha_0)
+L\mu\|v\|\,\frac{\|e_k^\delta\|}{\alpha_k}
+\frac{L\mu}{2}\Bigl(\frac{\|e_k^\delta\|}{\alpha_k}\Bigr)^2 .
\]
With $\theta_k:=\|e_k^\delta\|/\alpha_k$, $a:=\mu\|v\|(1+\alpha_0)$, $b:=L\mu\|v\|$ and $c:=L\mu/2$ this is the recurrence of Lemma~\ref{lem:recurrence}; the hypothesis \eqref{eq:smallness1} guarantees $\theta_0\le\theta$ and $4ac\le(1-b)^2$. Consequently, for all $0\le k\le\hat k$,
\[
\frac{\|e_k^\delta\|}{\alpha_k}
\le\frac{(1-L\mu\|v\|)-\sqrt{(1-L\mu\|v\|)^2-2L\mu^2\|v\|(1+\alpha_0)}}{L\mu},
\]
or equivalently, rationalizing the numerator,
\[
\frac{\|e_k^\delta\|}{\alpha_k}
\le\frac{2\mu\|v\|(1+\alpha_0)}{(1-L\mu\|v\|)+\sqrt{(1-L\mu\|v\|)^2-2L\mu^2\|v\|(1+\alpha_0)}},
\]
which implies $\|e_k^\delta\|\le C\alpha_k\|v\|$ with $C$ as in \eqref{eq:errbound1}. Since $(\alpha_k)$ is non-increasing, $\|e_k^\delta\|\le C\alpha_0\|v\|<r$ shows $x_k^\delta\in B_r(x^\dg)$ for $0\le k\le\hat k$.

To prove \eqref{eq:residual-khat}, we derive a bound for $\|Ae_{k}^\delta-y^\delta+y\|$. From \eqref{eq:iter} one obtains, after adding and subtracting suitable terms,
\begin{equation}\label{eq:Aek}
Ae_{k+1}^\delta-y^\delta+y=I_1+I_2+I_3+I_4+I_5,
\end{equation}
where
\begin{align*}
I_1&:=A_k^\delta r_{\alpha_k}(A_k^\delta)\{A_k^\delta+(A-A_k^\delta)\}v, &
I_2&:=(A-A_k^\delta)r_{\alpha_k}(A_k^\delta)\{A_k^\delta+(A-A_k^\delta)\}v,\\
I_3&:=-(A-A_k^\delta)g_{\alpha_k}(A_k^\delta)\{F(x_k^\delta)-y^\delta-A_k^\delta e_k^\delta\}, &
I_4&:=-A_k^\delta g_{\alpha_k}(A_k^\delta)\{F(x_k^\delta)-y-A_k^\delta e_k^\delta\},\\
I_5&:=-r_{\alpha_k}(A_k^\delta)(y-y^\delta). &&
\end{align*}
By Assumption~\ref{ass:nonlinearity}, Proposition~\ref{prop:taylor} and \eqref{eq:resolvent},
\begin{align*}
\|I_1\|&\le\alpha_k^2\|v\|+\alpha_kL\|e_k^\delta\|\|v\|, &
\|I_2\|&\le\alpha_kL\|e_k^\delta\|\|v\|+L^2\|e_k^\delta\|^2\|v\|,\\
\|I_3\|&\le\frac{L\|e_k^\delta\|}{\alpha_k}\Bigl(\delta+\frac{L}{2}\|e_k^\delta\|^2\Bigr), &
\|I_4\|&\le\frac{L}{2}\|e_k^\delta\|^2, \qquad \|I_5\|\le\delta .
\end{align*}
Indeed: in $I_1$ we use $\|A_k^\delta r_{\alpha_k}(A_k^\delta)\|=\|\alpha_kA_k^\delta(\alpha_kI+A_k^\delta)^{-1}\|\le\alpha_k$ by \eqref{eq:resolvent}, together with $\|A-A_k^\delta\|\le L\|e_k^\delta\|$ from \eqref{eq:lipschitz}; in $I_2$ we use $\|r_{\alpha_k}(A_k^\delta)\{A_k^\delta+(A-A_k^\delta)\}v\|\le\alpha_k\|v\|+L\|e_k^\delta\|\|v\|$, which follows from $\|r_{\alpha_k}(A_k^\delta)A_k^\delta\|\le\alpha_k$ and $\|r_{\alpha_k}(A_k^\delta)\|\le1$, while the prefactor $A-A_k^\delta$ contributes another factor $L\|e_k^\delta\|$; in $I_3$ we use $\|g_{\alpha_k}(A_k^\delta)\|\le1/\alpha_k$ and Proposition~\ref{prop:taylor} for
$\|F(x_k^\delta)-y^\delta-A_k^\delta e_k^\delta\|\le\delta+\frac{L}{2}\|e_k^\delta\|^2$; in $I_4$ we use $\|A_k^\delta g_{\alpha_k}(A_k^\delta)\|\le1$ and Proposition~\ref{prop:taylor}; and in $I_5$ we use $\|r_{\alpha_k}(A_k^\delta)\|\le1$.

Grouping the terms as
\[
\|I_3\|+\|I_4\|\le\Bigl(1+\frac{L\|e_k^\delta\|}{\alpha_k}\Bigr)\frac{L}{2}\|e_k^\delta\|^2+\frac{L\|e_k^\delta\|}{\alpha_k}\,\delta ,
\]
and inserting the induction bound $\|e_k^\delta\|\le C\alpha_k\|v\|$ (so that $L\|e_k^\delta\|/\alpha_k\le CL\|v\|$), we obtain
\begin{align*}
\|Ae_{k+1}^\delta-y^\delta+y\|
&\le\alpha_k^2\|v\|+2CL\alpha_k^2\|v\|^2+C^2L^2\alpha_k^2\|v\|^3
+(1+CL\|v\|)\frac{C^2L}{2}\alpha_k^2\|v\|^2\\
&\qquad+(1+CL\|v\|)\,\delta\\
&=\Bigl\{1+\Bigl(2C+\frac{C^2}{2}\Bigr)L\|v\|+\Bigl(C^2+\frac{C^3}{2}\Bigr)L^2\|v\|^2\Bigr\}\alpha_k^2\|v\|
+(1+CL\|v\|)\,\delta ,
\end{align*}
that is,
\begin{equation}\label{eq:Aek-bound}
\|Ae_{k+1}^\delta-y^\delta+y\|\ \le\ (1+C_1)\,\alpha_k^2\|v\|+(1+CL\|v\|)\,\delta ,
\end{equation}
with $C_1$ as in the statement. For $k=0$ the same bound holds trivially, since by $e_0=Av$ and the scaling condition \eqref{eq:scaling},
\[
\|Ae_0-y^\delta+y\|\le\|y-y^\delta\|+\|A(Av)\|\le\delta+\alpha_0^2\|v\|\le(1+CL\|v\|)\delta+(1+C_1)\alpha_0^2\|v\| .
\]
Consequently, by \eqref{eq:alphas} (i.e., $\alpha_{\hat k-1}\le\mu\alpha_{\hat k}$),
\[
\|Ae_{\hat k}^\delta-y^\delta+y\|\ \le\ (1+C_1)\alpha_{\hat k-1}^2\|v\|+(1+CL\|v\|)\,\delta
\ \le\ (1+C_1)\mu^2\alpha_{\hat k}^2\|v\|+(1+CL\|v\|)\,\delta .
\]
Together with Proposition~\ref{prop:taylor} and \eqref{eq:errbound1},
\begin{align*}
\|F(x_{\hat k}^\delta)-y^\delta\|
&\le\|Ae_{\hat k}^\delta-y^\delta+y\|+\|F(x_{\hat k}^\delta)-y-Ae_{\hat k}^\delta\|\\
&\le(1+CL\|v\|)\delta+(1+C_1)\mu^2\alpha_{\hat k}^2\|v\|+\frac{L}{2}C^2\alpha_{\hat k}^2\|v\|^2\\
&=(1+CL\|v\|)\delta+\Bigl((1+C_1)\mu^2+\frac{L}{2}C^2\|v\|\Bigr)\alpha_{\hat k}^2\|v\| .
\end{align*}
Finally, by \eqref{eq:khat}, $\alpha_{\hat k}^2\le\delta/\|e_0\|$, so that $\alpha_{\hat k}^2\|v\|\le\delta\,\|v\|/\|e_0\|$, and \eqref{eq:residual-khat} follows with the constant $\tau$ given in the statement.
\end{proof}

We next verify that Rule~\ref{rule:heuristic} yields a well-defined stopping index and derive a lower bound for the corresponding regularization parameter $\alpha_{k_*}$.

\begin{lemma}\label{lem:kstar-bound}
Suppose that the assumptions of Lemma~\ref{lem:wd-iter} and Assumption~\ref{ass:noise} hold, and that the horizon of Rule~\ref{rule:heuristic} satisfies $K_\infty\ge\hat k$, where $\hat k$ is the index from \eqref{eq:khat}. Then Rule~\ref{rule:heuristic} yields a well-defined finite index $k_*$, and with the constant
\[
c:=\frac{\kappa^2}{\mu^2\tau^2}\in(0,1),
\]
where $\kappa$ is the constant from Assumption~\ref{ass:noise} and $\tau$ is as in Lemma~\ref{lem:wd-iter}, one has
\begin{equation}\label{eq:alphakstar}
\alpha_{k_*}^2\ \ge\ \frac{c\,\delta}{\|e_0\|}.
\end{equation}
\end{lemma}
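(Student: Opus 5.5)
The plan is to compare $\theta(k_*,y^\delta)$ with $\theta(\hat k,y^\delta)$, which is admissible because $\hat k\le K_\infty$. Well-definedness is immediate from Lemma~\ref{lem:wd-rule}(i): the window $\{0,\dots,K_\infty\}$ is finite and nonempty, so a minimizer $k_*$ exists. Because of the minimizing property,
\[
\frac{\|F(x_{k_*}^\delta)-y^\delta\|^2}{\alpha_{k_*}^2}\ \le\ \frac{\|F(x_{\hat k}^\delta)-y^\delta\|^2}{\alpha_{\hat k}^2}.
\]

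Next we bound both sides. For the lower bound on the left-hand numerator we invoke Assumption~\ref{ass:noise}. Since $x_{k_*}^\delta\in S(y^\delta)$ (as $k_*\le K_\infty\le k_\infty$), we get $\|F(x_{k_*}^\delta)-y^\delta\|\ge\kappa\delta$. For the right-hand numerator we use \eqref{eq:residual-khat} from Lemma~\ref{lem:wd-iter}, which gives $\|F(x_{\hat k}^\delta)-y^\delta\|\le\tau\delta$. Combining these,
\[
\frac{\kappa^2\delta^2}{\alpha_{k_*}^2}\le\frac{\tau^2\delta^2}{\alpha_{\hat k}^2},
\qquad\text{i.e.}\qquad \alpha_{k_*}^2\ge\frac{\kappa^2}{\tau^2}\,\alpha_{\hat k}^2 ,
\]
where we divide by $\delta^2>0$. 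The case $\delta=0$ makes \eqref{eq:alphakstar} trivial.

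It remains to bound $\alpha_{\hat k}^2$ from below in terms of $\delta/\|e_0\|$. Here we use the defining property \eqref{eq:khat}. If $\hat k\ge1$, then $\alpha_{\hat k-1}^2>\delta/\|e_0\|$, and by \eqref{eq:alphas} $\alpha_{\hat k}\ge\alpha_{\hat k-1}/\mu$, so $\alpha_{\hat k}^2>\delta/(\mu^2\|e_0\|)$. If $\hat k=0$, this would mean $\alpha_0^2\le\delta/\|e_0\|$, which contradicts the standing assumption under which $\hat k$ was fixed. Should one read that assumption as $\alpha_0>\delta/\|e_0\|$ rather than $\alpha_0^2>\delta/\|e_0\|$, I would note that $\hat k\ge1$ is what is really needed and treat the degenerate case separately. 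Putting these together yields $\alpha_{k_*}^2\ge\kappa^2\delta/(\mu^2\tau^2\|e_0\|)=c\,\delta/\|e_0\|$.

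Finally, $c\in(0,1)$ follows because $\kappa<1$, $\mu>1$ and $\tau\ge1+CL\|v\|\ge1$.

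The only delicate point is this bookkeeping at the boundary index $\hat k$: it needs $\hat k\ge1$, which comes from $\alpha_0^2>\delta/\|e_0\|$. It also needs the iterates up to $\hat k$ to stay in $B_r(x^\dg)$ so that \eqref{eq:residual-khat} applies, and this is supplied by Lemma~\ref{lem:wd-iter} under the hypothesis $r>C\alpha_0\|v\|$.
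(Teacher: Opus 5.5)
Your proposal is correct and follows the paper's proof essentially line for line. Both compare $\theta(k_*,y^\delta)\le\theta(\hat k,y^\delta)$, bound the left side below by $\kappa^2\delta^2/\alpha_{k_*}^2$ using Assumption~\ref{ass:noise}, and bound the right side above using \eqref{eq:residual-khat} together with $\alpha_{\hat k}\ge\alpha_{\hat k-1}/\mu$ and $\alpha_{\hat k-1}^2>\delta/\|e_0\|$.

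Your observation that this step needs $\hat k\ge1$, i.e.\ $\alpha_0^2>\delta/\|e_0\|$, is a fair point: the paper's stated $\alpha_0>\delta/\|e_0\|$ does not guarantee it when $\alpha_0<1$, and the paper uses it tacitly. Rather than ``treating $\hat k=0$ separately'', where the same comparison only gives $\alpha_{k_*}^2\ge\kappa^2\alpha_0^2/\tau^2$, you should simply impose $\alpha_0^2>\delta/\|e_0\|$.
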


\begin{proof}
Well-definedness of $k_*$ follows from Lemma~\ref{lem:wd-rule}(i). Let $\hat k$ be the integer fixed in \eqref{eq:khat}; by hypothesis, $\hat k\le K_\infty$, so $\hat k$ belongs to the minimization window of Rule~\ref{rule:heuristic}. We estimate $\theta(\hat k,y^\delta)$ from above and $\theta(k_*,y^\delta)$ from below and compare.

\emph{Upper bound.} Since $\hat k$ belongs to the admissible index set of Rule~\ref{rule:heuristic} (the iterates up to $\hat k$ are well defined by Lemma~\ref{lem:wd-iter}), the minimizing property of $k_*$ and the residual bound \eqref{eq:residual-khat} give
\[
\theta(k_*,y^\delta)\ \le\ \theta(\hat k,y^\delta)
=\frac{\|F(x_{\hat k}^\delta)-y^\delta\|^2}{\alpha_{\hat k}^2}
\ \le\ \frac{\tau^2\delta^2}{\alpha_{\hat k}^2}.
\]
By \eqref{eq:alphas} we have $\alpha_{\hat k}\ge\alpha_{\hat k-1}/\mu$, and by the minimality of $\hat k$ in \eqref{eq:khat} the previous parameter still satisfies $\alpha_{\hat k-1}^2>\delta/\|e_0\|$. Combining,
\[
\frac{\tau^2\delta^2}{\alpha_{\hat k}^2}
\ \le\ \frac{\mu^2\tau^2\delta^2}{\alpha_{\hat k-1}^2}
\ \le\ \mu^2\tau^2\delta^2\cdot\frac{\|e_0\|}{\delta}
\ =\ \mu^2\tau^2\,\delta\,\|e_0\| .
\]

\emph{Lower bound.} By construction, $x_{k_*}^\delta\in S(y^\delta)$, so Assumption~\ref{ass:noise} applies with $\tilde y=F(x_{k_*}^\delta)$ and yields
\[
\|F(x_{k_*}^\delta)-y^\delta\|\ \ge\ \kappa\,\|y-y^\delta\| .
\]
Since $\delta=\|y-y^\delta\|$ by the convention \eqref{eq:noise}, this gives
\[
\theta(k_*,y^\delta)\ \ge\ \frac{\kappa^2\delta^2}{\alpha_{k_*}^2}.
\]

\emph{Conclusion.} Chaining the two estimates,
\[
\frac{\kappa^2\delta^2}{\alpha_{k_*}^2}\ \le\ \mu^2\tau^2\,\delta\,\|e_0\| ,
\qquad\text{i.e.,}\qquad
\alpha_{k_*}^2\ \ge\ \frac{\kappa^2}{\mu^2\tau^2}\cdot\frac{\delta}{\|e_0\|},
\]
which is \eqref{eq:alphakstar} with $c=\kappa^2/(\mu^2\tau^2)$. That $c<1$ follows from $\kappa<1<\mu\tau$.
\end{proof}

To verify that the iterates are well defined up to $k_*$, we introduce an auxiliary index $k_*^\delta$: with the constant $c\in(0,1)$ from Lemma~\ref{lem:kstar-bound}, let $k_*^\delta$ be the smallest integer such that
\begin{equation}\label{eq:kdstar}
\alpha_{k_*^\delta}^2\ \le\ \frac{c\,\delta}{\|e_0\|}\ \le\ \alpha_k^2,\qquad 0\le k<k_*^\delta .
\end{equation}
By Lemma~\ref{lem:kstar-bound}, $k_*\le k_*^\delta$.

\begin{lemma}\label{lem:wd-kdstar}
Let \eqref{eq:alphas}, \eqref{eq:kdstar}, Assumption~\ref{ass:nonlinearity} and the range condition \eqref{eq:range} hold. Suppose that
\[
L\mu\|v\|+\mu\sqrt{2L\|v\|\bigl(1+\alpha_0/c\bigr)}\le1
\]
and
\begin{equation}\label{eq:smallness2}
\|e_0\|\Bigl((1-L\mu\|v\|)+\sqrt{(1-L\mu\|v\|)^2-2L\mu^2\|v\|\bigl(1+\alpha_0/c\bigr)}\Bigr)\le2\alpha_0\mu\|v\|\bigl(1+\alpha_0/c\bigr).
\end{equation}
Then for every $0\le k\le k_*^\delta$,
\begin{equation}\label{eq:errbound2}
\|x_k^\delta-x^\dg\|\le C'\alpha_k\|v\|,\qquad
C':=\frac{2\mu\bigl(1+\alpha_0/c\bigr)}{1-L\mu\|v\|},
\end{equation}
and if $r>C'\alpha_0\|v\|$ then $x_k^\delta\in B_r(x^\dg)$ for $0\le k\le k_*^\delta$ and
\begin{equation}\label{eq:residual-kdstar}
\|F(x_{k_*^\delta}^\delta)-y^\delta\|\le\tau'\delta,
\end{equation}
where
\[
\tau':=(1+C'L\|v\|)+\Bigl((1+C_1')\mu^2+\frac{L}{2}C'^2\|v\|\Bigr)\frac{\|v\|}{\|e_0\|},
\]
\[
C_1':=\Bigl(2C'+\frac{C'^2}{2}\Bigr)L\|v\|+\Bigl(C'^2+\frac{C'^3}{2}\Bigr)L^2\|v\|^2 .
\]
\end{lemma}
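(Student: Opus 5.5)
The proof will repeat the argument of Lemma~\ref{lem:wd-iter} almost verbatim, with $\hat k$ replaced by $k_*^\delta$ and the threshold $\delta/\|e_0\|$ replaced by $c\,\delta/\|e_0\|$. The only place where the definition of the stopping index entered that proof was the estimate $\delta/\alpha_k\le\alpha_k\|e_0\|$ for $k<\hat k$. This estimate now weakens to $\delta/\alpha_k\le\alpha_k\|e_0\|/c$ for $0\le k<k_*^\delta$, by \eqref{eq:kdstar}. Everything else (the decomposition \eqref{eq:decomp}, the bound \eqref{eq:middle-term}, Proposition~\ref{prop:taylor}, \eqref{eq:resolvent}) is independent of the stopping index and carries over unchanged.

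\emph{Step 1 (error bound by induction).} For $k<k_*^\delta$, taking norms in \eqref{eq:decomp} gives
\[
\|e_{k+1}^\delta\|\le\alpha_k\|v\|+\alpha_k\|e_0\|/c+L\|e_k^\delta\|\|v\|+\frac{L\|e_k^\delta\|^2}{2\alpha_k}.
\]
Dividing by $\alpha_{k+1}\ge\alpha_k/\mu$ and using $\|e_0\|\le\alpha_0\|v\|$, we obtain the recurrence of Lemma~\ref{lem:recurrence} for $\theta_k=\|e_k^\delta\|/\alpha_k$, with
\[
a=\mu\|v\|(1+\alpha_0/c),\qquad b=L\mu\|v\|,\qquad c_{\rm rec}=L\mu/2 .
\]
The first hypothesis gives $4ac_{\rm rec}\le(1-b)^2$, since the condition is exactly $\mu\sqrt{2L\|v\|(1+\alpha_0/c)}\le 1-b$. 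Condition \eqref{eq:smallness2} is exactly $\theta_0=\|e_0\|/\alpha_0\le\theta$ after rationalizing. Lemma~\ref{lem:recurrence} then gives $\theta_k\le\theta$. Rationalizing $\theta$ and dropping the square root in the denominator gives $\theta\le 2a/(1-b)=C'\|v\|$, which is \eqref{eq:errbound2}. Monotonicity of $(\alpha_k)$ and $r>C'\alpha_0\|v\|$ keep all iterates in $B_r(x^\dg)$. This is needed inductively: the Lipschitz and Taylor estimates at step $k$ require $x_k^\delta\in B_r(x^\dg)$, so the ball membership must be carried along in the same induction.

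\emph{Step 2 (residual bound).} Here I reuse the splitting \eqref{eq:Aek} into $I_1,\dots,I_5$ and the term-wise bounds, which hold for any $k$ with $x_k^\delta\in B_r(x^\dg)$. Inserting $\|e_k^\delta\|\le C'\alpha_k\|v\|$ gives \eqref{eq:Aek-bound} with $C,C_1$ replaced by $C',C_1'$; the $k=0$ case is again trivial. With $\alpha_{k_*^\delta-1}\le\mu\alpha_{k_*^\delta}$ and Proposition~\ref{prop:taylor},
\[
\|F(x^\delta_{k_*^\delta})-y^\delta\|\le(1+C'L\|v\|)\delta+\Bigl((1+C_1')\mu^2+\tfrac L2C'^2\|v\|\Bigr)\alpha_{k_*^\delta}^2\|v\| .
\]
Finally, \eqref{eq:kdstar} gives $\alpha_{k_*^\delta}^2\le c\delta/\|e_0\|\le\delta/\|e_0\|$ because $c<1$, which yields \eqref{eq:residual-kdstar} with the stated $\tau'$.

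The main point to check is Step 1. One must confirm that the weakened bound on $\delta/\alpha_k$ is compensated exactly by the modified constant $a$. One must also confirm that the indices match: the bound on $\delta/\alpha_k$ is used only for $k<k_*^\delta$, while the error bound is concluded for $k+1\le k_*^\delta$. No new idea is needed beyond this bookkeeping.
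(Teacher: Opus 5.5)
Your Step~1 is correct and coincides with the paper's argument. The only change is $\delta/\alpha_k\le\alpha_k\|e_0\|/c$ for $k<k_*^\delta$, which replaces $a$ by $\mu\|v\|(1+\alpha_0/c)$, and you correctly identify the two hypotheses as $4ac_{\rm rec}\le(1-b)^2$ and $\theta_0\le\theta$. Your remark that $x_k^\delta\in B_r(x^\dg)$ must be carried through the induction is also right: it means $r>C'\alpha_0\|v\|$ is already needed for \eqref{eq:errbound2}, which is more careful than the statement itself.

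Step~2, however, fails, and it fails in exactly the same way in the paper's proof, which also just reruns the proof of Lemma~\ref{lem:wd-iter}. The imported bound $\|I_1\|\le\alpha_k^2\|v\|+\alpha_kL\|e_k^\delta\|\|v\|$ is false. Since $I_1=A_k^\delta r_{\alpha_k}(A_k^\delta)e_0$, the estimate $\|A_k^\delta r_{\alpha_k}(A_k^\delta)\|\le\alpha_k$ invoked there gives only $\|I_1\|\le\alpha_k\|e_0\|$, and nothing better holds. The principal part $\alpha_kA_k^\delta(\alpha_kI+A_k^\delta)^{-1}A_k^\delta v$ has spectral symbol $\alpha\lambda^2/(\alpha+\lambda)\approx\alpha\lambda$ for $\lambda\gg\alpha$. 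In the linear case, $\alpha^{-1}\|\alpha A(\alpha I+A)^{-1}e_0\|\to\|e_0\|$ as $\alpha\to0$ for $e_0=Av$. With the correct bound, your computation yields only $\|F(x_{k_*^\delta}^\delta)-y^\delta\|\le\mu\sqrt{c\,\delta\,\|e_0\|}+O(\delta)$.

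No argument can close this gap, because \eqref{eq:residual-kdstar} is false. Take $\X=\mathbb{R}$, $F(x)=x$, $\alpha_0\ge1$, $e_0=v>0$ and $y^\delta=y+\delta$. By the limit above, the same happens for every linear $F=A\ge0$ and $e_0=Av\neq0$.
\begin{itemize}
\item Since $F'$ is constant, \eqref{eq:lipschitz} holds for every $L>0$. For small $L$ both smallness hypotheses are satisfied, and $\tau'$ stays bounded (it tends to $1+\mu^2$ as $L\to0$).
\item For linear $F$ each step of \eqref{eq:iter} is an exact Lavrentiev solve, $x_{k+1}^\delta=(A+\alpha_kI)^{-1}(y^\delta+\alpha_kx_0)$. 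Hence $F(x_{k+1}^\delta)-y^\delta=\alpha_k(A+\alpha_kI)^{-1}\bigl(Ae_0-(y^\delta-y)\bigr)=\frac{\alpha_k}{1+\alpha_k}(e_0-\delta)$.
\item For $\delta$ small enough that $k_*^\delta\ge1$, put $k=k_*^\delta-1$. Then \eqref{eq:kdstar} gives $\alpha_k\ge(c\delta/e_0)^{1/2}$.
\item So the residual at $k_*^\delta$ is at least $(1+\alpha_0)^{-1}(c\,\delta\,e_0)^{1/2}(1-\delta/e_0)$, which exceeds $\tau'\delta$ for all sufficiently small $\delta$.
\end{itemize}
Thus only \eqref{eq:errbound2} and the ball membership are provable, and the residual bound must be weakened to $O(\sqrt\delta)$. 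Since Lemma~\ref{lem:kstar-bound} rests on the analogous estimate \eqref{eq:residual-khat}, the defect propagates beyond this lemma.
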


\begin{proof}
The proof follows that of Lemma~\ref{lem:wd-iter} line by line, so we only indicate where the two arguments differ; the difference is exactly the constant $c$ from \eqref{eq:kdstar}. The a~priori index $\hat k$ from \eqref{eq:khat} is replaced by the index $k_*^\delta$ from \eqref{eq:kdstar}, and the noise bound available for $0\le k<k_*^\delta$ becomes
\[
\alpha_k^2\ \ge\ \frac{c\,\delta}{\|e_0\|},
\qquad\text{equivalently}\qquad
\frac{\delta}{\alpha_k}\ \le\ \frac{\alpha_k\|e_0\|}{c}
\quad\text{and}\quad
\delta\ \le\ \frac{\alpha_k^2\|e_0\|}{c}.
\]
This affects the argument in exactly two places. First, in the recursion \eqref{eq:recur-raw} the term $\delta/\alpha_k$ is now bounded by $\alpha_k\|e_0\|/c$ instead of $\alpha_k\|e_0\|$, so that the coefficient $a$ in the quadratic recursion of Lemma~\ref{lem:recurrence} becomes $a=\mu\|v\|(1+\alpha_0/c)$; accordingly, the smallness conditions and the resulting constant change from \eqref{eq:smallness1} and $C$ to \eqref{eq:smallness2} and $C'=2\mu(1+\alpha_0/c)/(1-L\mu\|v\|)$, and the induction yields \eqref{eq:errbound2}. Second, in the residual estimate the final step uses $\alpha_{k_*^\delta}^2\le c\delta/\|e_0\|\le\delta/\|e_0\|$ (since $0<c<1$), so \eqref{eq:residual-kdstar} follows exactly as \eqref{eq:residual-khat}, with $C$, $C_1$, $\tau$ replaced by $C'$, $C_1'$, $\tau'$.
\end{proof}

The following two results were proved in \cite{mahale-singh} for the iteration \eqref{eq:iter} under the same nonlinearity assumptions, with the a~priori index defined through the a~posteriori stopping rule considered there. Since the index $k_*^\delta$ of \eqref{eq:kdstar} plays a slightly different role in the heuristic setting, we include the (adapted) proofs for completeness.

\begin{proposition}[{cf.\ \cite[Proposition~3.3]{mahale-singh}}]\label{prop:DE}
Suppose that the assumptions of Lemma~\ref{lem:wd-kdstar} hold and that
\begin{equation}\label{eq:DE-smallness}
2\mu L\|v\|+L\Bigl(\frac{C'}{2}+1\Bigr)\|v\|\mu<1 .
\end{equation}
Then, with $B_k^\delta:=\|\alpha_k(\alpha_kI+A_k^\delta)^{-1}e_0\|$, for all $0\le k<k_*^\delta$,
\[
\|x_k^\delta-x^\dg\|\le D\Bigl(B_k^\delta+\frac{\delta^{1/2}\|e_0\|^{1/2}}{c^{1/2}}\Bigr),\qquad
\|x_{k+1}^\delta-x^\dg\|\le E\Bigl(B_k^\delta+\frac{\delta^{1/2}\|e_0\|^{1/2}}{c^{1/2}}\Bigr),
\]
where
\[
D=\frac{C_3}{1-C_3L\|v\|},\qquad
C_3=\frac{2\mu}{1-\mu\bigl(1+\frac{C'}{2}\bigr)L\|v\|},\qquad
E=1+L\|v\|\Bigl(2+\frac{C'}{2}\Bigr)D ;
\]
note that $C_3L\|v\|<1$, and hence $D>0$, is guaranteed by \eqref{eq:DE-smallness}.
\end{proposition}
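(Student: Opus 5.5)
Starting from the decomposition \eqref{eq:decomp}, but this time keeping the first term in the form involving $A_k^\delta$ rather than $A$, I write
\[
e_{k+1}^\delta=\alpha_k(\alpha_kI+A_k^\delta)^{-1}e_0-(\alpha_kI+A_k^\delta)^{-1}\bigl(F(x_k^\delta)-y-A_k^\delta e_k^\delta\bigr)+(\alpha_kI+A_k^\delta)^{-1}(y^\delta-y).
\]
Taking norms and using \eqref{eq:resolvent}, Proposition~\ref{prop:taylor}, and the bound $\delta/\alpha_k\le\alpha_k\|e_0\|/c$ valid for $k<k_*^\delta$, I get
\[
\|e_{k+1}^\delta\|\le B_k^\delta+\frac{L}{2\alpha_k}\|e_k^\delta\|^2+\frac{\delta}{\alpha_k}.
\]
By Lemma~\ref{lem:wd-kdstar}, $\|e_k^\delta\|\le C'\alpha_k\|v\|$. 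Hence the quadratic term is at most $\frac{LC'}{2}\|v\|\,\|e_k^\delta\|$, which is linear in $\|e_k^\delta\|$ with a small coefficient. For the noise term I use $\delta/\alpha_k\le \delta^{1/2}\|e_0\|^{1/2}/c^{1/2}$, which follows from $\alpha_k^2\ge c\delta/\|e_0\|$. This produces a recursion
\[
\|e_{k+1}^\delta\|\le B_k^\delta+\tfrac{LC'}{2}\|v\|\|e_k^\delta\|+\eta,\qquad \eta:=\delta^{1/2}\|e_0\|^{1/2}c^{-1/2},
\]
so the second estimate (with $E$) will follow once the first (with $D$) is known.

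For the $D$-bound I need to compare $\|e_k^\delta\|$ with $B_k^\delta$ itself rather than with $B_{k-1}^\delta$. I plan an induction on $k$; the case $k=0$ is trivial since $B_0^\delta\ge\|e_0\|/2$ by \eqref{eq:scaling}. In the inductive step I need two comparisons of $B$-quantities:
\begin{itemize}
\item[(a)] a step comparison $B_{k}^\delta\le\mu B_{k+1}$-type bound;
\item[(b)] a perturbation estimate allowing the linearization point to move from $x_k^\delta$ to $x_{k+1}^\delta$.
\end{itemize}
For (a), self-adjointness gives $r_{\alpha_k}(\lambda)\le\mu\, r_{\alpha_{k+1}}(\lambda)$ pointwise, since $\alpha_k/(\alpha_k+\lambda)\le\mu\alpha_{k+1}/(\alpha_{k+1}+\lambda)$ by \eqref{eq:alphas}. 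Spectral calculus then yields $\|r_{\alpha_k}(A_k^\delta)e_0\|\le\mu\|r_{\alpha_{k+1}}(A_k^\delta)e_0\|$. For (b), the resolvent identity together with $e_0=Av$ and \eqref{eq:lipschitz} gives
\[
\|r_{\alpha}(A_k^\delta)e_0-r_\alpha(A_{k+1}^\delta)e_0\|\le L\|v\|\bigl(\|e_k^\delta\|+\|e_{k+1}^\delta\|\bigr),
\]
as in \eqref{eq:middle-term}, after also splitting through $A$ so that $\|(A-A_j^\delta)\|\le L\|e_j^\delta\|$ can be applied. Combining (a) and (b) gives
\[
B_k^\delta\le\mu B_{k+1}^\delta+\mu L\|v\|(\|e_k^\delta\|+\|e_{k+1}^\delta\|).
\]
Substituting this into the recursion, I obtain
\[
\|e_{k+1}^\delta\|\le\mu B_{k+1}^\delta+\mu L\|v\|\|e_{k+1}^\delta\|+\mu L\|v\|(1+C'/2)\|e_k^\delta\|+\eta.
\]
The coefficients of $\|e_k^\delta\|$ are made small by \eqref{eq:DE-smallness}. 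I then bound $\|e_k^\delta\|$ by the induction hypothesis $D(B_k^\delta+\eta)$, and convert $B_k^\delta$ once more via the previous inequality. Absorbing the $\|e_{k+1}^\delta\|$ terms on the left-hand side gives a closed inequality $\|e_{k+1}^\delta\|\le C_3(B_{k+1}^\delta+\eta)+C_3L\|v\|\,\|e_{k+1}^\delta\|$, and hence the $D$-bound with $D=C_3/(1-C_3L\|v\|)$.

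The main obstacle is the bookkeeping in this absorption step: the induction must be closed without the constants growing from step to step. Simply iterating the previous bound would produce a factor $D\mu$ each time, so instead I would bound $\|e_k^\delta\|$ directly by the a priori estimate $C'\alpha_k\|v\|$ only inside the quadratic term, and apply the smallness hypothesis \eqref{eq:DE-smallness} exactly once. Finally, the $E$-bound follows by inserting the $D$-bound into the one-step recursion: the term $\|e_k^\delta\|$ with coefficient $L\|v\|(2+C'/2)$ collects the Taylor term and the perturbation terms, and this is precisely $E=1+L\|v\|(2+C'/2)D$.
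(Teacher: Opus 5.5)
\textbf{There is a genuine gap in the inductive step for the $D$-bound.} You work throughout with $B_k^\delta$, whose operator $A_k^\delta$ changes with $k$. Your comparison (a)+(b),
\[
B_k^\delta\le\mu B_{k+1}^\delta+\mu L\|v\|\bigl(\|e_k^\delta\|+\|e_{k+1}^\delta\|\bigr),
\]
therefore carries error terms. When you insert the hypothesis $\|e_k^\delta\|\le D(B_k^\delta+\eta)$ and convert $B_k^\delta$ back with the same inequality, $\|e_k^\delta\|$ reappears multiplied by $D$. Write $\ell:=L\|v\|$ and $a:=\frac{LC'}{2}\|v\|$, and carry out the substitutions and absorptions you describe (even with the sharper coefficient $\mu\ell+a$ in front of $\|e_k^\delta\|$). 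The resulting bound for $\|e_{k+1}^\delta\|/(B_{k+1}^\delta+\eta)$ is at most $D$ only if
\[
\mu\ell(1+a)\,D^2-(1-\mu\ell-\mu a)\,D+\mu\le0 ,
\]
a quadratic condition that \eqref{eq:DE-smallness} does not imply. For example, take $\mu=2$, $\ell=0.05$ and $1+\alpha_0/c=1.08$, so that $C'=4.8$ and $a=0.12$. These values satisfy the explicit smallness conditions of Lemma~\ref{lem:wd-kdstar} and \eqref{eq:DE-smallness}. Yet the quadratic then has negative discriminant ($0.66^2<8\cdot0.112$), so your chain of inequalities cannot close the induction with any constant, let alone with $D=C_3/(1-C_3L\|v\|)$. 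The ``closed inequality'' $\|e_{k+1}^\delta\|\le C_3(B_{k+1}^\delta+\eta)+C_3L\|v\|\,\|e_{k+1}^\delta\|$ that you announce is true, but it does not follow from your ingredients. The bookkeeping you flagged as the main obstacle is exactly where the argument breaks. The a priori bound $C'\alpha_k\|v\|$ cannot repair it, because it is not expressed through $B_{k+1}^\delta+\eta$.

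\textbf{The missing idea is to run the induction on $B_k:=\|\alpha_k(\alpha_kI+A)^{-1}e_0\|$, formed with the \emph{fixed} operator $A=F'(x^\dg)$, rather than on $B_k^\delta$.} For this quantity your spectral argument (a) gives $B_k\le\mu B_{k+1}$ with no perturbation term, so $P_k:=B_k+\eta$ satisfies $P_k\le\mu P_{k+1}$. Write the first term of \eqref{eq:decomp} as $\alpha_k(\alpha_kI+A)^{-1}e_0$ and pay $L\|v\|\,\|e_k^\delta\|$ for the middle term via \eqref{eq:middle-term}. This gives $\|e_{k+1}^\delta\|\le P_k+\varepsilon\|e_k^\delta\|$ with $\varepsilon=L(\frac{C'}{2}+1)\|v\|$. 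Hence $t_k:=\|e_k^\delta\|/P_k$ satisfies the \emph{linear} recursion $t_{k+1}\le\mu+\mu\varepsilon t_k$, which closes at $C_3=2\mu/(1-\mu\varepsilon)$ because $t_0\le2$. Only afterwards, and at the same index, is $B_k$ replaced by $B_k^\delta$, through $B_k\le B_k^\delta+L\|v\|\,\|e_k^\delta\|$ (this is \eqref{eq:middle-term}, i.e.\ half of your (b)). A single absorption gives $D=C_3/(1-C_3L\|v\|)$, and $C_3L\|v\|<1$ is equivalent to \eqref{eq:DE-smallness}.

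Once the $D$-bound is in hand, your derivation of the $E$-bound is fine. Your recursion, with $A_k^\delta$ in the first term, even gives the smaller constant $1+\frac{LC'}{2}\|v\|D\le E$. The coefficient $L\|v\|(2+\frac{C'}{2})$ you quote arises only in the route through $B_k$, where $\varepsilon$ and the conversion to $B_k^\delta$ add up.
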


\begin{proof}
We follow \cite[Proposition~3.3]{mahale-singh}, adapting the argument to the index $k_*^\delta$ from \eqref{eq:kdstar}. Set $B_k:=\|\alpha_k(\alpha_kI+A)^{-1}e_0\|$. Taking norms in the decomposition \eqref{eq:decomp} and using \eqref{eq:middle-term}, Proposition~\ref{prop:taylor} and \eqref{eq:resolvent} as in the proof of Lemma~\ref{lem:wd-iter},
\begin{equation}\label{eq:DE-basic}
\|x_{k+1}^\delta-x^\dg\|
\ \le\ B_k+\frac{\delta}{\alpha_k}+\frac{L\|e_k^\delta\|^2}{2\alpha_k}+L\|e_k^\delta\|\,\|v\| .
\end{equation}
For $0\le k<k_*^\delta$, \eqref{eq:kdstar} gives $\alpha_k\ge(c\delta/\|e_0\|)^{1/2}$ and hence
$\delta/\alpha_k\le\delta^{1/2}\|e_0\|^{1/2}/c^{1/2}$, while Lemma~\ref{lem:wd-kdstar} gives
$\|e_k^\delta\|/(2\alpha_k)\le C'\|v\|/2$. Therefore
\begin{equation}\label{eq:DE-Pk}
\|x_{k+1}^\delta-x^\dg\|\ \le\ P_k+\varepsilon\,\|x_k^\delta-x^\dg\|,
\qquad
P_k:=B_k+\frac{\delta^{1/2}\|e_0\|^{1/2}}{c^{1/2}},
\quad
\varepsilon:=L\Bigl(\frac{C'}{2}+1\Bigr)\|v\| .
\end{equation}
As in \cite[Proposition~3.3]{mahale-singh}, one has $B_k\le\mu B_{k+1}$ (since $\alpha\mapsto\alpha(\alpha+\lambda)^{-1}$ is non-decreasing in $\alpha>0$ for every $\lambda\ge0$ and $\alpha_k\le\mu\alpha_{k+1}$), and consequently $P_k/P_{k+1}\le\mu$. Dividing \eqref{eq:DE-Pk} by $P_{k+1}$ and writing $\eta_k:=\|x_k^\delta-x^\dg\|/P_k$, we obtain the recursion
\[
\eta_{k+1}\ \le\ \frac{P_k}{P_{k+1}}+\frac{P_k}{P_{k+1}}\,\varepsilon\,\eta_k
\ \le\ \mu+\mu\varepsilon\,\eta_k .
\]
For the initial term, the scaling condition \eqref{eq:scaling} gives $\|\alpha_0I+A\|\le2\alpha_0$ and hence
$B_0\ge\alpha_0\|e_0\|/\|\alpha_0I+A\|\ge\|e_0\|/2$, so that $\eta_0\le\|e_0\|/B_0\le2\le C_3$. If $\eta_k\le C_3=2\mu/(1-\mu\varepsilon)$, then
\[
\eta_{k+1}\le\mu+\mu\varepsilon\,\frac{2\mu}{1-\mu\varepsilon}
=\frac{\mu(1+\mu\varepsilon)}{1-\mu\varepsilon}
\le\frac{2\mu}{1-\mu\varepsilon}=C_3 ,
\]
using $\mu\varepsilon<1$ from \eqref{eq:DE-smallness}. By induction, $\eta_k\le C_3$, i.e.,
\begin{equation}\label{eq:DE-C3}
\|x_k^\delta-x^\dg\|\ \le\ C_3\,P_k=C_3\Bigl(B_k+\frac{\delta^{1/2}\|e_0\|^{1/2}}{c^{1/2}}\Bigr),
\qquad 0\le k\le k_*^\delta .
\end{equation}
Next we replace $B_k$ by its computable counterpart $B_k^\delta$. Using the resolvent identity and \eqref{eq:lipschitz} exactly as in \eqref{eq:middle-term},
\begin{equation}\label{eq:DE-BB}
|B_k-B_k^\delta|
\le\bigl\|\alpha_k\bigl((\alpha_kI+A)^{-1}-(\alpha_kI+A_k^\delta)^{-1}\bigr)Av\bigr\|
\le L\|x_k^\delta-x^\dg\|\,\|v\| .
\end{equation}
Inserting \eqref{eq:DE-BB} into \eqref{eq:DE-C3},
\[
\|x_k^\delta-x^\dg\|\le C_3\Bigl(B_k^\delta+L\|x_k^\delta-x^\dg\|\,\|v\|+\frac{\delta^{1/2}\|e_0\|^{1/2}}{c^{1/2}}\Bigr),
\]
and absorbing the middle term into the left-hand side (possible since $C_3L\|v\|<1$ by \eqref{eq:DE-smallness}),
\[
\|x_k^\delta-x^\dg\|\ \le\ \frac{C_3}{1-C_3L\|v\|}\Bigl(B_k^\delta+\frac{\delta^{1/2}\|e_0\|^{1/2}}{c^{1/2}}\Bigr)
= D\Bigl(B_k^\delta+\frac{\delta^{1/2}\|e_0\|^{1/2}}{c^{1/2}}\Bigr),
\]
which is the first assertion. For the second, we return to \eqref{eq:DE-Pk} and use \eqref{eq:DE-BB} once more:
\begin{align*}
\|x_{k+1}^\delta-x^\dg\|
&\le B_k^\delta+|B_k-B_k^\delta|+\frac{\delta^{1/2}\|e_0\|^{1/2}}{c^{1/2}}+\varepsilon\|x_k^\delta-x^\dg\|\\
&\le B_k^\delta+\frac{\delta^{1/2}\|e_0\|^{1/2}}{c^{1/2}}+L\|v\|\Bigl(2+\frac{C'}{2}\Bigr)\|x_k^\delta-x^\dg\| .
\end{align*}
Inserting the first assertion into the last term yields
\[
\|x_{k+1}^\delta-x^\dg\|\le\Bigl(1+L\|v\|\Bigl(2+\frac{C'}{2}\Bigr)D\Bigr)\Bigl(B_k^\delta+\frac{\delta^{1/2}\|e_0\|^{1/2}}{c^{1/2}}\Bigr)
=E\Bigl(B_k^\delta+\frac{\delta^{1/2}\|e_0\|^{1/2}}{c^{1/2}}\Bigr). \qedhere
\]
\end{proof}

\begin{lemma}[{cf.\ \cite[Lemma~3.3]{mahale-singh}}]\label{lem:Bk-noisy}
Assume the conditions of Proposition~\ref{prop:DE} and let $k_*^\delta$ be as in \eqref{eq:kdstar}. Suppose the range condition \eqref{eq:range} holds and
\[
\sqrt{2L}\,\|v\|^{1/2}(D+E)+DL\|v\|<1 .
\]
Then for all $0\le k<k_*^\delta$, with $B_k^\delta:=\|\alpha_k(\alpha_kI+A_k^\delta)^{-1}e_0\|$,
\begin{multline*}
B_k^\delta\le\frac{1}{1-C_5}\Bigl(
C_4^{1/2}\bigl\|\alpha_{k+1}(\alpha_{k+1}I+A_{k+1}^\delta)^{-1}\bigl(F(x_{k+1}^\delta)-y^\delta\bigr)\bigr\|^{1/2}\|v\|^{1/2}\\
+(2\delta)^{1/2}\|v\|^{1/2}
+\frac{C_5\,\delta^{1/2}\|e_0\|^{1/2}}{c^{1/2}}\Bigr),
\end{multline*}
where $C_4=\mu(2LC'\|v\|+1)$ and $C_5=\sqrt{2L}\,\|v\|^{1/2}(D+E)+DL\|v\|$.
\end{lemma}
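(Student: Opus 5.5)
The plan is to bound $(B_k^\delta)^2$ by an inner product that can be traded, via self-adjointness, for the weighted residual at step $k+1$. Set $R_k:=\alpha_k(\alpha_kI+A_k^\delta)^{-1}$, which is self-adjoint with $0\le R_k\le I$ by Assumption~\ref{ass:nonlinearity}(iv). Write $e_0=Av=A_{k+1}^\delta v+(A-A_{k+1}^\delta)v$. Then
\[
(B_k^\delta)^2=\ip{R_ke_0}{R_ke_0}\le\ip{R_k^2A_{k+1}^\delta v}{e_0}+\|R_ke_0\|\,L\|e_{k+1}^\delta\|\,\|v\| .
\]
The first term is rewritten symmetrically as $\ip{R_kA_{k+1}^\delta v}{R_ke_0}$ or, better, as $\ip{A_{k+1}^\delta R_{k+1}'e_0}{v}$ after replacing $R_k$ by $R_{k+1}:=\alpha_{k+1}(\alpha_{k+1}I+A_{k+1}^\delta)^{-1}$. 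This replacement has two parts. The spectral comparison $r_{\alpha_k}\le\mu\, r_{\alpha_{k+1}}$ on the spectrum of a common operator costs a factor $\mu$. Switching $A_k^\delta$ to $A_{k+1}^\delta$ uses the resolvent identity together with \eqref{eq:lipschitz}, and it costs terms bounded by $L(\|e_k^\delta\|+\|e_{k+1}^\delta\|)\|v\|$ times a factor of order $\alpha_k\|v\|$.

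The core step is to identify $A_{k+1}^\delta e_{k+1}^\delta$ with $F(x_{k+1}^\delta)-y^\delta$ up to controlled errors. Since $\|e_{k+1}^\delta\|\le C'\alpha_{k+1}\|v\|$ by Lemma~\ref{lem:wd-kdstar}, Proposition~\ref{prop:taylor} gives
\[
F(x_{k+1}^\delta)-y^\delta=A_{k+1}^\delta e_{k+1}^\delta+(y-y^\delta)+O\bigl(L\|e_{k+1}^\delta\|^2\bigr).
\]
I would therefore use the iteration to express $e_0$ through $e_{k+1}^\delta$ and the data. The fixed-point structure of \eqref{eq:iter} gives
\[
\alpha_k e_0=(\alpha_kI+A_k^\delta)e_{k+1}^\delta+\bigl(F(x_k^\delta)-y-A_k^\delta e_k^\delta\bigr)-(y^\delta-y),
\]
and applying $R_{k+1}$ to the corresponding expression lets $\ip{R_{k+1}A_{k+1}^\delta e_0}{v}$ become, after moving $A_{k+1}^\delta$ onto $v$ by self-adjointness, a combination of three pieces:
\begin{itemize}
\item $\ip{R_{k+1}(F(x_{k+1}^\delta)-y^\delta)}{v}$, bounded by $\|R_{k+1}(F(x_{k+1}^\delta)-y^\delta)\|\,\|v\|$;
\item a noise term bounded by $2\delta\|v\|$, namely one $\delta$ from $y-y^\delta$ and one from the iteration;
\item quadratic Taylor and Lipschitz terms bounded by $2LC'\|v\|$ times the residual, giving the factor $C_4=\mu(2LC'\|v\|+1)$.
\end{itemize}

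Collecting these gives an inequality of the form
\[
(B_k^\delta)^2\le C_4\|R_{k+1}(F(x_{k+1}^\delta)-y^\delta)\|\,\|v\|+2\delta\|v\|+(\text{cross terms}).
\]
The cross terms are products of $B_k^\delta$ or of $\|e_k^\delta\|,\|e_{k+1}^\delta\|$ with $L\|v\|$ or $\sqrt{L\|v\|}$. Next I would bound $\|e_k^\delta\|$ and $\|e_{k+1}^\delta\|$ by Proposition~\ref{prop:DE}, namely $D(B_k^\delta+\delta^{1/2}\|e_0\|^{1/2}c^{-1/2})$ and $E(\cdot)$. Taking square roots with $\sqrt{a+b}\le\sqrt a+\sqrt b$ then produces a linear inequality
\[
B_k^\delta\le(\ldots)+C_5\bigl(B_k^\delta+\delta^{1/2}\|e_0\|^{1/2}c^{-1/2}\bigr),
\]
with $C_5=\sqrt{2L}\|v\|^{1/2}(D+E)+DL\|v\|$. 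Absorbing $C_5B_k^\delta$ on the left, which is allowed since $C_5<1$, yields the claim.

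The main obstacle is the bookkeeping in the core identity: the cross terms must be arranged so that they appear exactly as $\sqrt{2L\|v\|}$ times $\|e_k^\delta\|+\|e_{k+1}^\delta\|$, plus $L\|v\|\,\|e_k^\delta\|$ entering linearly, rather than with worse powers. I would first try to keep everything as inner products against $v$ and use Cauchy–Schwarz only at the end. Self-adjointness is essential there, and it is used again for the comparison $r_{\alpha_k}\le\mu r_{\alpha_{k+1}}$.
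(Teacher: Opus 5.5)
\textbf{Verdict: genuine gap in the core step.} Your outer frame is the paper's: a quadratic inequality for $B_k^\delta$ from self-adjointness, a link to the residual at step $k+1$, the factor $C_4$, then Proposition~\ref{prop:DE} and absorption of $C_5B_k^\delta$. The core step, however, does not work as you describe it.

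\textbf{Where it fails.} You trade the two resolvent factors in $\langle A_{k+1}^\delta R_k^2e_0,v\rangle$ for a single factor and work with $\langle R_{k+1}A_{k+1}^\delta e_0,v\rangle$. This is not justified, for three reasons.
\begin{itemize}
\item The comparison $r_{\alpha_k}\le\mu\,r_{\alpha_{k+1}}$ is an ordering of positive operators. It can be used on norms, not inside the indefinite form $\langle\,\cdot\,e_0,v\rangle$.
\item Switching $A_k^\delta\to A_{k+1}^\delta$ on the $e_0$ side costs, as you note, an additive term $L(\|e_k^\delta\|+\|e_{k+1}^\delta\|)\,\alpha_k\|v\|^2$. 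This is linear in the errors with a factor $\alpha_k$, so it cannot be absorbed into $C_5B_k^\delta$, and the claimed bound contains no such term.
\item Most seriously, one factor is too weak. Substituting $e_0=\alpha_k^{-1}(\alpha_kI+A_k^\delta)e_{k+1}^\delta+\cdots$ into $R_{k+1}A_{k+1}^\delta e_0$ produces, besides your three pieces, the term $\alpha_k^{-1}R_{k+1}A_{k+1}^\delta A_k^\delta e_{k+1}^\delta$. This term is of the size of the \emph{unweighted} residual.
\end{itemize}
A concrete counterexample already appears for linear $F$ with exact data. There $e_{k+1}=r_{\alpha_k}(A)Av$. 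Take $v$ concentrated at a spectral value $\lambda_0\gg\alpha_k$. Then $\langle R_{k+1}Ae_0,v\rangle\approx\alpha_{k+1}\lambda_0\|v\|^2$, while $\|R_{k+1}Ae_{k+1}\|\,\|v\|\approx\alpha_k\alpha_{k+1}\|v\|^2$. So the inequality you want for this intermediate quantity is false.

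\textbf{The missing idea.} Keep both factors $R_k$, and the operator $A_k^\delta$, until the very end.
\begin{itemize}
\item Split $e_0=A_k^\delta v+(A-A_k^\delta)v$. Since $A_k^\delta$ commutes with $R_k$, Cauchy--Schwarz gives $(B_k^\delta)^2\le\|R_k^2A_k^\delta e_0\|\,\|v\|+B_k^\delta L\|e_k^\delta\|\,\|v\|$.
\item This split is also what yields the linear term $DL\|v\|$ in $C_5$. Your split with $A_{k+1}^\delta$ gives the cross term $L\|e_{k+1}^\delta\|\,\|v\|$, hence $EL\|v\|$, which need not be bounded by $DL\|v\|$.
\item Apply $R_kA_k^\delta$ to the exact error representation $e_{k+1}^\delta=R_ke_0-(\alpha_kI+A_k^\delta)^{-1}\{F(x_k^\delta)-y^\delta-A_k^\delta e_k^\delta\}$. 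One factor $R_k$ converts $R_ke_0$ into $e_{k+1}^\delta$; the other remains as the weight. This gives $\|R_k^2A_k^\delta e_0\|\le\|R_k(F(x_{k+1}^\delta)-y^\delta)\|+L\|e_k^\delta\|^2+2L\|e_{k+1}^\delta\|^2+2\delta$.
\item Only now, on the norm of the fixed vector $F(x_{k+1}^\delta)-y^\delta$, switch $A_k^\delta\to A_{k+1}^\delta$ and $\alpha_k\to\alpha_{k+1}$. The first switch costs a multiplicative factor $1+2LC'\|v\|$, via the resolvent identity and $\|x_{k+1}^\delta-x_k^\delta\|\le2C'\alpha_k\|v\|$. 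The second costs a factor $\mu$. This is where $C_4$ comes from.
\end{itemize}
With these changes, the rest of your plan (Proposition~\ref{prop:DE}, square roots, absorption) goes through.
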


\begin{proof}
We follow \cite[Lemma~3.4]{mahale-singh}. Using $e_0=Av=\{A_k^\delta+(A-A_k^\delta)\}v$,
\begin{align*}
(B_k^\delta)^2
&=\bigl\langle\alpha_k(\alpha_kI+A_k^\delta)^{-1}e_0,\ \alpha_k(\alpha_kI+A_k^\delta)^{-1}\{A_k^\delta+(A-A_k^\delta)\}v\bigr\rangle\\
&=\bigl\langle\alpha_k(\alpha_kI+A_k^\delta)^{-1}e_0,\ \alpha_k(\alpha_kI+A_k^\delta)^{-1}A_k^\delta v\bigr\rangle\\
&\qquad+\bigl\langle\alpha_k(\alpha_kI+A_k^\delta)^{-1}e_0,\ \alpha_k(\alpha_kI+A_k^\delta)^{-1}(A-A_k^\delta)v\bigr\rangle\\
&\le\ \gamma_k^\delta\,\|v\|+B_k^\delta\,L\|e_k^\delta\|\,\|v\| ,
\qquad
\gamma_k^\delta:=\bigl\|\alpha_k^2(\alpha_kI+A_k^\delta)^{-2}A_k^\delta e_0\bigr\| ,
\end{align*}
where in the first term the operator was moved to the left factor of the inner product as in \cite{mahale-singh}, and in the second we used \eqref{eq:resolvent} and \eqref{eq:lipschitz}. Completing the square in the resulting quadratic inequality
$(B_k^\delta)^2-L\|e_k^\delta\|\|v\|\,B_k^\delta\le\gamma_k^\delta\|v\|$ gives
\begin{equation}\label{eq:Bk-quad}
B_k^\delta\ \le\ \bigl(\gamma_k^\delta\|v\|\bigr)^{1/2}+L\|e_k^\delta\|\,\|v\| .
\end{equation}
To estimate $\gamma_k^\delta$, we rewrite the iteration \eqref{eq:iter} as
\[
x_{k+1}^\delta-x^\dg=\alpha_k(\alpha_kI+A_k^\delta)^{-1}e_0-(\alpha_kI+A_k^\delta)^{-1}\bigl\{F(x_k^\delta)-y^\delta-A_k^\delta e_k^\delta\bigr\},
\]
apply $\alpha_k(\alpha_kI+A_k^\delta)^{-1}A_k^\delta$ to both sides, and rearrange:
\[
\alpha_k^2(\alpha_kI+A_k^\delta)^{-2}A_k^\delta e_0
=\alpha_k(\alpha_kI+A_k^\delta)^{-1}A_k^\delta e_{k+1}^\delta
+\alpha_k(\alpha_kI+A_k^\delta)^{-2}A_k^\delta\bigl\{F(x_k^\delta)-y^\delta-A_k^\delta e_k^\delta\bigr\}.
\]
Taking norms, using $\|(\alpha_kI+A_k^\delta)^{-1}A_k^\delta\|\le1$ and $\|\alpha_k(\alpha_kI+A_k^\delta)^{-1}\|\le1$, inserting $\pm y^\delta$, $\pm F(x_{k+1}^\delta)$ and $\pm A_{k+1}^\delta e_{k+1}^\delta$, and applying \eqref{eq:noise}, \eqref{eq:lipschitz} and Proposition~\ref{prop:taylor} exactly as in \cite[Lemma~3.4]{mahale-singh}, one arrives at
\begin{equation}\label{eq:gamma-bound}
\gamma_k^\delta\ \le\ \bigl\|\alpha_k(\alpha_kI+A_k^\delta)^{-1}\bigl(F(x_{k+1}^\delta)-y^\delta\bigr)\bigr\|
+L\|e_k^\delta\|^2+2L\|e_{k+1}^\delta\|^2+2\delta .
\end{equation}
In the first term on the right of \eqref{eq:gamma-bound} we replace $A_k^\delta$ by $A_{k+1}^\delta$ and $\alpha_k$ by $\alpha_{k+1}$: the resolvent identity, \eqref{eq:lipschitz}, Lemma~\ref{lem:wd-kdstar} (which gives $\|e_k^\delta\|+\|e_{k+1}^\delta\|\le C'(\alpha_k+\alpha_{k+1})\|v\|\le2C'\alpha_k\|v\|$) and \eqref{eq:alphas} yield
\begin{align}
\bigl\|\alpha_k(\alpha_kI+A_k^\delta)^{-1}\bigl(F(x_{k+1}^\delta)-y^\delta\bigr)\bigr\|
&\le(2LC'\|v\|+1)\,\bigl\|\alpha_k(\alpha_kI+A_{k+1}^\delta)^{-1}\bigl(F(x_{k+1}^\delta)-y^\delta\bigr)\bigr\|\notag\\
&\le\mu\,(2LC'\|v\|+1)\,\bigl\|\alpha_{k+1}(\alpha_{k+1}I+A_{k+1}^\delta)^{-1}\bigl(F(x_{k+1}^\delta)-y^\delta\bigr)\bigr\|\notag\\
&=C_4\,\bigl\|\alpha_{k+1}(\alpha_{k+1}I+A_{k+1}^\delta)^{-1}\bigl(F(x_{k+1}^\delta)-y^\delta\bigr)\bigr\| .\label{eq:C4-step}
\end{align}
Combining \eqref{eq:Bk-quad}--\eqref{eq:C4-step} with the elementary inequality $\sqrt{a+b}\le\sqrt a+\sqrt b$,
\begin{multline*}
B_k^\delta\le C_4^{1/2}\bigl\|\alpha_{k+1}(\alpha_{k+1}I+A_{k+1}^\delta)^{-1}\bigl(F(x_{k+1}^\delta)-y^\delta\bigr)\bigr\|^{1/2}\|v\|^{1/2}\\
+(2\delta)^{1/2}\|v\|^{1/2}
+\sqrt{2L}\,\|v\|^{1/2}\bigl(\|e_k^\delta\|+\|e_{k+1}^\delta\|\bigr)
+L\|e_k^\delta\|\,\|v\| .
\end{multline*}
Finally, we bound $\|e_k^\delta\|$ and $\|e_{k+1}^\delta\|$ by Proposition~\ref{prop:DE}:
\begin{multline*}
\sqrt{2L}\,\|v\|^{1/2}\bigl(\|e_k^\delta\|+\|e_{k+1}^\delta\|\bigr)+L\|e_k^\delta\|\,\|v\|
\ \le\ \Bigl(\sqrt{2L}\,\|v\|^{1/2}(D+E)+DL\|v\|\Bigr)\Bigl(B_k^\delta+\frac{\delta^{1/2}\|e_0\|^{1/2}}{c^{1/2}}\Bigr)\\
= C_5\Bigl(B_k^\delta+\frac{\delta^{1/2}\|e_0\|^{1/2}}{c^{1/2}}\Bigr),
\end{multline*}
and absorbing the term $C_5B_k^\delta$ into the left-hand side (possible since $C_5<1$ by hypothesis) completes the proof.
\end{proof}

We can now state the first main result.

\begin{theorem}\label{thm:rate-range}
Assume the conditions of Proposition~\ref{prop:DE}, Assumption~\ref{ass:noise}, the range condition \eqref{eq:range}, and
\[
\sqrt{2L}\,\|v\|^{1/2}(D+E)+DL\|v\|<1 ;
\]
let the horizon of Rule~\ref{rule:heuristic} satisfy $K_\infty\ge\hat k$ (so that Lemma~\ref{lem:kstar-bound} applies), and let $k_*$ be determined by Rule~\ref{rule:heuristic} and $\delta_*$ by \eqref{eq:deltastar}. If $k_*\ge1$, then
\begin{equation}\label{eq:mainrate}
\|x_{k_*}^\delta-x^\dg\|\ \le\ C_8\bigl(\sqrt{\delta_*}+\sqrt{\delta}\bigr),
\end{equation}
where $C_8=\max\{C_6,C_7\}$ with
\[
C_6=\frac{EC_4^{1/2}\|v\|^{1/2}}{1-C_5},\qquad
C_7=\frac{E\sqrt{2}\,\|v\|^{1/2}}{1-C_5}+\frac{E\|e_0\|^{1/2}}{c^{1/2}}\Bigl(\frac{C_5}{1-C_5}+1\Bigr).
\]
\end{theorem}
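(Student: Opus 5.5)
The plan is to apply the second estimate of Proposition~\ref{prop:DE} with $k:=k_*-1$, and then bound $B_{k_*-1}^\delta$ by Lemma~\ref{lem:Bk-noisy}. This is where the hypothesis $k_*\ge1$ enters. By Lemma~\ref{lem:kstar-bound}, $k_*\le k_*^\delta$, so $k:=k_*-1$ satisfies $0\le k<k_*^\delta$, and both Proposition~\ref{prop:DE} and Lemma~\ref{lem:Bk-noisy} are applicable at this index. The iterates up to $k_*$ lie in $B_r(x^\dg)$ by Lemma~\ref{lem:wd-kdstar}. Proposition~\ref{prop:DE} then gives
\[
\|x_{k_*}^\delta-x^\dg\|\le E\Bigl(B_{k_*-1}^\delta+\frac{\delta^{1/2}\|e_0\|^{1/2}}{c^{1/2}}\Bigr).
\]

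Next we insert the bound of Lemma~\ref{lem:Bk-noisy} with $k+1=k_*$. Its leading term contains
\[
\bigl\|\alpha_{k_*}(\alpha_{k_*}I+A_{k_*}^\delta)^{-1}\bigl(F(x_{k_*}^\delta)-y^\delta\bigr)\bigr\|^{1/2}.
\]
By \eqref{eq:resolvent}, $\|\alpha_{k_*}(\alpha_{k_*}I+A_{k_*}^\delta)^{-1}\|\le1$, so this quantity is at most $\|F(x_{k_*}^\delta)-y^\delta\|^{1/2}=\sqrt{\delta_*}$ by \eqref{eq:deltastar}. This yields
\[
B_{k_*-1}^\delta\le\frac{1}{1-C_5}\Bigl(C_4^{1/2}\|v\|^{1/2}\sqrt{\delta_*}+\sqrt2\,\|v\|^{1/2}\sqrt\delta+\frac{C_5\|e_0\|^{1/2}}{c^{1/2}}\sqrt\delta\Bigr).
\]

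Multiplying by $E$ and adding $E\|e_0\|^{1/2}c^{-1/2}\sqrt\delta$, we collect the terms. The coefficient of $\sqrt{\delta_*}$ is exactly $C_6$. The coefficient of $\sqrt\delta$ is
\[
\frac{E\sqrt2\|v\|^{1/2}}{1-C_5}+\frac{E\|e_0\|^{1/2}}{c^{1/2}}\Bigl(\frac{C_5}{1-C_5}+1\Bigr)=C_7.
\]
Bounding both coefficients by $C_8=\max\{C_6,C_7\}$ gives \eqref{eq:mainrate}.

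The main point requiring care is the index bookkeeping. We need $k_*-1<k_*^\delta$, which follows from $k_*\le k_*^\delta$ (Lemma~\ref{lem:kstar-bound} together with \eqref{eq:kdstar}). We also need the smallness hypotheses of Lemma~\ref{lem:wd-kdstar} so that the iterates up to $k_*$ are defined and lie in the ball; these are assumed via the conditions of Proposition~\ref{prop:DE}. The assumption $K_\infty\ge\hat k$ is used only to invoke Lemma~\ref{lem:kstar-bound}. Otherwise the argument is a direct chaining of the two earlier estimates.
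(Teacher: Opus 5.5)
Your proposal is correct and follows the paper's own proof. You combine the second estimate of Proposition~\ref{prop:DE} with Lemma~\ref{lem:Bk-noisy} at $k=k_*-1$, which is admissible because $k_*\le k_*^\delta$, and bound the weighted residual by $\delta_*$ via the contraction $\|\alpha_{k_*}(\alpha_{k_*}I+A_{k_*}^\delta)^{-1}\|\le1$; the only cosmetic difference is that the paper first derives the combined estimate \eqref{eq:pre-rate} for a general $0\le k<k_*^\delta$ and then specializes to $k=k_*-1$.
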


\begin{proof}
Fix $0\le k<k_*^\delta$. Inserting the bound of Lemma~\ref{lem:Bk-noisy} for $B_k^\delta$ into the second estimate of Proposition~\ref{prop:DE} gives
\begin{align*}
\|x_{k+1}^\delta-x^\dg\|
&\le E\Bigl(B_k^\delta+\frac{\delta^{1/2}\|e_0\|^{1/2}}{c^{1/2}}\Bigr)\\
&\le\frac{E}{1-C_5}\Bigl(
C_4^{1/2}\bigl\|\alpha_{k+1}(\alpha_{k+1}I+A_{k+1}^\delta)^{-1}\bigl(F(x_{k+1}^\delta)-y^\delta\bigr)\bigr\|^{1/2}\|v\|^{1/2}\\
&\qquad\qquad+(2\delta)^{1/2}\|v\|^{1/2}
+\frac{C_5\,\delta^{1/2}\|e_0\|^{1/2}}{c^{1/2}}\Bigr)
+\frac{E\,\delta^{1/2}\|e_0\|^{1/2}}{c^{1/2}} .
\end{align*}
Collecting the terms proportional to the weighted residual and to $\delta^{1/2}$, respectively, we obtain
\begin{equation}\label{eq:pre-rate}
\|x_{k+1}^\delta-x^\dg\|\le
C_6\bigl\|\alpha_{k+1}(\alpha_{k+1}I+A_{k+1}^\delta)^{-1}\bigl(F(x_{k+1}^\delta)-y^\delta\bigr)\bigr\|^{1/2}
+C_7\,\delta^{1/2},
\end{equation}
with
\[
C_6=\frac{EC_4^{1/2}\|v\|^{1/2}}{1-C_5},\qquad
C_7=\frac{E\sqrt{2}\,\|v\|^{1/2}}{1-C_5}+\frac{E\|e_0\|^{1/2}}{c^{1/2}}\Bigl(\frac{C_5}{1-C_5}+1\Bigr),
\]
exactly as in the statement.

Now suppose $k_*\ge1$ and set $k=k_*-1$ in \eqref{eq:pre-rate}, which is admissible because $k_*\le k_*^\delta$ by Lemma~\ref{lem:kstar-bound} and \eqref{eq:kdstar}. By \eqref{eq:resolvent}, $\|\alpha_{k_*}(\alpha_{k_*}I+A_{k_*}^\delta)^{-1}\|\le1$, and therefore
\[
\bigl\|\alpha_{k_*}(\alpha_{k_*}I+A_{k_*}^\delta)^{-1}\bigl(F(x_{k_*}^\delta)-y^\delta\bigr)\bigr\|
\le\|F(x_{k_*}^\delta)-y^\delta\|=\delta_*
\]
by the definition \eqref{eq:deltastar} of $\delta_*$. Hence
\[
\|x_{k_*}^\delta-x^\dg\|\le C_6\sqrt{\delta_*}+C_7\sqrt{\delta}\le C_8\bigl(\sqrt{\delta_*}+\sqrt{\delta}\bigr),
\qquad C_8=\max\{C_6,C_7\}.
\]
\end{proof}

\begin{remark}\label{rem:kstar-zero}
The hypothesis $k_*\ge1$ cannot be dropped without further information: for $k_*=0$ the rule returns the initial guess $x_0$, and the lower bound \eqref{eq:alphakstar}, which at $k_*=0$ merely states $\alpha_0^2\ge c\,\delta/\|e_0\|$, provides no control of $\|e_0\|$ in terms of $\delta$. (An earlier version of this manuscript claimed such control by inadvertently reversing this inequality; we thank the referee for pointing this out.) No estimate is therefore asserted in this degenerate case. In practice $k_*=0$ signals that the very first iterate already increased the value of $\theta$, which for reasonably chosen $\alpha_0$ indicates noise at the level of the initial residual; in all numerical experiments of Section~\ref{sec:numerics} with $\delta$ below that level we observed $k_*\ge1$.
\end{remark}

Theorem~\ref{thm:rate-range} bounds the error by the computable quantity $\sqrt{\delta_*}+\sqrt{\delta}$. To interpret this bound, the size of $\delta_*$ relative to $\delta$ must be understood. The following two-sided estimate is what our assumptions actually provide.

\begin{corollary}\label{cor:deltastar}
Under the assumptions of Theorem~\ref{thm:rate-range} (including $k_*\ge1$), and assuming in addition that the horizon satisfies the stronger condition $K_\infty\ge k_*^\delta$, with $k_*^\delta$ from \eqref{eq:kdstar},
\begin{equation}\label{eq:deltastar-bounds}
\kappa\,\delta\ \le\ \delta_*\ \le\ \mu\,\tau'\,\alpha_0\,\Bigl(\frac{\|e_0\|}{c}\Bigr)^{1/2}\sqrt{\delta}\ ,
\end{equation}
and consequently
\begin{equation}\label{eq:quarter-rate}
\|x_{k_*}^\delta-x^\dg\|\ \le\ C_8\Bigl(1+\bigl(\mu\tau'\alpha_0\bigr)^{1/2}\bigl(\|e_0\|/c\bigr)^{1/4}\Bigr)\,\delta^{1/4}
\end{equation}
for every $\delta\in(0,1]$ for which the stated assumptions, in particular the horizon condition $K_\infty\ge k_*^\delta$, are satisfied.
\end{corollary}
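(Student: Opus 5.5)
The plan has three parts: a lower bound on $\delta_*$, an upper bound on $\delta_*$, and then substitution into Theorem~\ref{thm:rate-range}.

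\emph{Lower bound.} This is immediate from Assumption~\ref{ass:noise}. Since $x_{k_*}^\delta\in S(y^\delta)$, taking $\tilde y=F(x_{k_*}^\delta)$ gives $\delta_*=\|F(x_{k_*}^\delta)-y^\delta\|\ge\kappa\|y-y^\delta\|=\kappa\delta$, exactly as in the lower-bound step of Lemma~\ref{lem:kstar-bound}.

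\emph{Upper bound.} Here I would compare the minimizer $k_*$ with the auxiliary index $k_*^\delta$ from \eqref{eq:kdstar}. The horizon condition $K_\infty\ge k_*^\delta$ puts $k_*^\delta$ in the minimization window, and Lemma~\ref{lem:wd-kdstar} gives $\|F(x_{k_*^\delta}^\delta)-y^\delta\|\le\tau'\delta$. The minimizing property then yields
\[
\frac{\delta_*^2}{\alpha_{k_*}^2}=\theta(k_*,y^\delta)\le\theta(k_*^\delta,y^\delta)\le\frac{\tau'^2\delta^2}{\alpha_{k_*^\delta}^2},
\qquad\text{so}\qquad
\delta_*\le\tau'\delta\,\frac{\alpha_{k_*}}{\alpha_{k_*^\delta}} .
\]
To bound the ratio, I would use $\alpha_{k_*}\le\alpha_0$, which holds because the sequence is non-increasing. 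For the denominator, I would combine $\alpha_{k_*^\delta}\ge\alpha_{k_*^\delta-1}/\mu$ with the defining inequality \eqref{eq:kdstar}, $\alpha_{k_*^\delta-1}^2\ge c\delta/\|e_0\|$. This requires $k_*^\delta\ge1$, which follows from $k_*\le k_*^\delta$ and the hypothesis $k_*\ge1$. The result is $1/\alpha_{k_*^\delta}\le\mu(\|e_0\|/(c\delta))^{1/2}$, hence
\[
\delta_*\le\tau'\delta\,\alpha_0\,\mu\,\Bigl(\frac{\|e_0\|}{c\,\delta}\Bigr)^{1/2}=\mu\tau'\alpha_0\Bigl(\frac{\|e_0\|}{c}\Bigr)^{1/2}\sqrt\delta ,
\]
which is the right-hand inequality of \eqref{eq:deltastar-bounds}.

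\emph{Rate.} Inserting the upper bound into \eqref{eq:mainrate} gives $\sqrt{\delta_*}\le(\mu\tau'\alpha_0)^{1/2}(\|e_0\|/c)^{1/4}\delta^{1/4}$. For $\delta\le1$ we also have $\sqrt\delta\le\delta^{1/4}$. Factoring out $C_8\,\delta^{1/4}$ yields \eqref{eq:quarter-rate}.

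\emph{Main obstacle.} The only delicate point is the one-step offset in the denominator: the bound $\alpha^2\ge c\delta/\|e_0\|$ is available only for indices strictly below $k_*^\delta$, so the factor $\mu$ is needed to pass from $\alpha_{k_*^\delta-1}$ to $\alpha_{k_*^\delta}$. I also need to check that the hypotheses of Lemma~\ref{lem:wd-kdstar} are contained in those of Proposition~\ref{prop:DE}, which the statement assumes. Everything else is bookkeeping.
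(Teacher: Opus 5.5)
Your proposal is correct and follows the paper's proof step for step. You get the lower bound from Assumption~\ref{ass:noise}. You get the upper bound by comparing $\theta(k_*,y^\delta)\le\theta(k_*^\delta,y^\delta)$ and using \eqref{eq:residual-kdstar}, $\alpha_{k_*}\le\alpha_0$ and $\alpha_{k_*^\delta}\ge\alpha_{k_*^\delta-1}/\mu$. You then insert the result into \eqref{eq:mainrate} with $\sqrt\delta\le\delta^{1/4}$. Your explicit check that $k_*^\delta\ge1$ (via $1\le k_*\le k_*^\delta$) is a correct detail that the paper leaves implicit.
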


\begin{remark}[asymptotic reading of the rate]\label{rem:growing-horizon}
Since $\hat k(\delta)$ and $k_*^\delta$ tend to infinity as $\delta\to0$, no fixed finite horizon $K$ can satisfy $K_\infty\ge k_*^\delta$ for all sufficiently small $\delta$. The estimate \eqref{eq:quarter-rate} is therefore not an asymptotic guarantee for a single finite-budget implementation; it is to be understood along sequences $\delta_j\to0$ together with horizons $K_j\to\infty$ chosen such that the condition $K_{\infty,j}\ge k_*^{\delta_j}$ holds for every $j$; that is, it holds for the ideal rule whose window always contains the required comparison index, of which any fixed-budget computation is a truncation. Because $\delta$ is unknown, such a family of horizons is not in general certifiable from the data (cf.\ Remark~\ref{rem:retrospective}); for the geometric choice $\alpha_k=\alpha_0q^k$ the required growth of $K_j$ is only logarithmic in $1/\delta_j$.
\end{remark}

\begin{proof}
The lower bound is Assumption~\ref{ass:noise} together with the convention $\delta=\|y-y^\delta\|$: $\delta_*=\|F(x_{k_*}^\delta)-y^\delta\|\ge\kappa\delta$. For the upper bound, the iterates up to $k_*^\delta$ are well defined by Lemma~\ref{lem:wd-kdstar}, and $k_*^\delta$ belongs to the minimization window of Rule~\ref{rule:heuristic} by the hypothesis $K_\infty\ge k_*^\delta$; hence
\[
\frac{\delta_*^2}{\alpha_{k_*}^2}=\theta(k_*,y^\delta)\le\theta(k_*^\delta,y^\delta)
\le\frac{\tau'^2\delta^2}{\alpha_{k_*^\delta}^2}
\le\frac{\mu^2\tau'^2\delta^2\,\|e_0\|}{c\,\delta}
=\frac{\mu^2\tau'^2\|e_0\|}{c}\,\delta ,
\]
where we used \eqref{eq:residual-kdstar} and $\alpha_{k_*^\delta}\ge\alpha_{k_*^\delta-1}/\mu\ge(c\delta/\|e_0\|)^{1/2}/\mu$ from \eqref{eq:kdstar}. Multiplying by $\alpha_{k_*}^2\le\alpha_0^2$ and taking square roots yields \eqref{eq:deltastar-bounds}; inserting the upper bound into \eqref{eq:mainrate} and using $\sqrt\delta\le\delta^{1/4}\,\delta^{1/4}$ gives \eqref{eq:quarter-rate} for $\delta\le1$.
\end{proof}

\begin{remark}\label{rem:deltastar-honest}
Two comments are essential for an honest reading of Corollary~\ref{cor:deltastar}. First, the guaranteed exponent in \eqref{eq:quarter-rate} is $1/4$, not $1/2$: the upper bound in \eqref{eq:deltastar-bounds} only gives $\delta_*=O(\sqrt\delta)$, and we do not claim $\delta_*=O(\delta)$ in general; indeed, in the experiments of Section~\ref{sec:numerics} the ratio $\delta_*/\delta$ grows as $\delta$ decreases (in Table~\ref{tab:results2d}, $\delta_*\approx3.9\times10^{-6}$ at $\delta=10^{-8}$). Within the theory, $\delta_*$ is simply the residual at the selected index. In the experiments it is natural, and empirically accurate (see Subsection~\ref{subsec:stress}), to interpret $\delta_*$ as the effective data error seen by the discrete method; in no case should it be read as an estimator of $\delta$ itself. Second, whenever $\delta_*\le C\delta$ holds, the bound \eqref{eq:mainrate} is of order $O(\sqrt\delta)$; in the matched experiments of Section~\ref{sec:numerics}, however, the ratio $\delta_*/\delta$ grows as $\delta$ decreases in all three examples (mildly in 1D, where $\delta_*$ settles near $10^{-5}$, and strongly in 2D/3D), so this favorable regime was not observed asymptotically and we do not claim it to be typical. Identifying conditions on the noise under which $\delta_*=O(\delta)$ can be proved is an open problem.
\end{remark}

\subsection{Noise-free data and a stability estimate}\label{subsec:noisefree}

For exact data the scheme \eqref{eq:iter} reduces to
\begin{equation}\label{eq:iter-nf}
x_{k+1}=x_k-(A_k+\alpha_kI)^{-1}\bigl(F(x_k)-y+\alpha_k(x_k-x_0)\bigr),\qquad A_k:=F'(x_k),
\end{equation}
with $(\alpha_k)$ as in \eqref{eq:alphas}.

\begin{lemma}[{\cite[Lemma~4.1]{mahale-singh}}]\label{lem:nf-wd}
Suppose Assumption~\ref{ass:nonlinearity} and the range condition \eqref{eq:range} hold, and that
\[
L\mu\|v\|+\sqrt{2L\mu^2\|v\|}\le1,\qquad
\|e_0\|\Bigl(1-L\mu\|v\|+\sqrt{(1-L\mu\|v\|)^2-2L\mu^2\|v\|}\Bigr)\le2\alpha_0\mu\|v\| .
\]
Then for all $k\ge0$,
\[
\|x_k-x^\dg\|\le C''\alpha_k\|v\|,\qquad C'':=\frac{2\mu}{1-L\mu\|v\|}.
\]
\end{lemma}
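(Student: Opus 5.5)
The plan is to mirror the induction in the proof of Lemma~\ref{lem:wd-iter}, this time with $\delta=0$ and with no upper limit on $k$. With $y^\delta=y$, the decomposition \eqref{eq:decomp} keeps its form but loses the noise term, so for $e_k:=x_k-x^\dg$ we get
\[
e_{k+1}=\alpha_k(\alpha_kI+A)^{-1}e_0+\alpha_k(\alpha_kI+A_k)^{-1}(A-A_k)(\alpha_kI+A)^{-1}e_0-(\alpha_kI+A_k)^{-1}\bigl(F(x_k)-y-A_ke_k\bigr).
\]
The range condition \eqref{eq:range} gives $e_0=Av$. From \eqref{eq:resolvent} we then have $\|\alpha_k(\alpha_kI+A)^{-1}Av\|\le\alpha_k\|v\|$. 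The middle term is bounded by $L\|e_k\|\|v\|$, exactly as in \eqref{eq:middle-term}. Proposition~\ref{prop:taylor} bounds the last term by $L\|e_k\|^2/(2\alpha_k)$. Combining these,
\[
\|e_{k+1}\|\le\alpha_k\|v\|+L\|v\|\,\|e_k\|+\frac{L}{2\alpha_k}\|e_k\|^2 .
\]
Because there is no $\delta/\alpha_k$ contribution, we never need an a~priori index like $\hat k$. This is what allows the estimate to hold for all $k\ge0$.

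Next, divide by $\alpha_{k+1}$ and use $\alpha_k\le\mu\alpha_{k+1}$ from \eqref{eq:alphas}. With $\theta_k:=\|e_k\|/\alpha_k$ this yields $\theta_{k+1}\le a+b\theta_k+c\theta_k^2$, where $a=\mu\|v\|$, $b=L\mu\|v\|$, $c=L\mu/2$. The first hypothesis, $L\mu\|v\|+\sqrt{2L\mu^2\|v\|}\le1$, is the same as $2\sqrt{ac}\le1-b$, i.e.\ $4ac\le(1-b)^2$. The second hypothesis, after rationalizing, is exactly $\theta_0=\|e_0\|/\alpha_0\le\theta$, with $\theta$ the fixed point of Lemma~\ref{lem:recurrence}, written as
\[
\theta=\frac{2\mu\|v\|}{(1-L\mu\|v\|)+\sqrt{(1-L\mu\|v\|)^2-2L\mu^2\|v\|}} .
\]
Lemma~\ref{lem:recurrence} then gives $\theta_k\le\theta$ for every $k$. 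Dropping the square root in the denominator gives $\theta\le 2\mu\|v\|/(1-L\mu\|v\|)=C''\|v\|$, which is the claimed bound.

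The main obstacle is that every step uses \eqref{eq:resolvent}, \eqref{eq:lipschitz} and Proposition~\ref{prop:taylor}, and these hold only on $B_r(x^\dg)$. The induction therefore has to carry $x_k\in B_r(x^\dg)$ along with the error bound. I would add the hypothesis $r>C''\alpha_0\|v\|$, as in Lemma~\ref{lem:wd-iter}; presumably this is implicit in the cited Lemma~4.1 of \cite{mahale-singh}. Then $\|e_k\|\le C''\alpha_k\|v\|\le C''\alpha_0\|v\|<r$ because $(\alpha_k)$ is nonincreasing. This puts $x_k$ in the ball, so $x_{k+1}$ is well defined and the one-step estimate can be applied. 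For that reason I would run Lemma~\ref{lem:recurrence}'s argument step by step inside the induction rather than citing it as a black box.
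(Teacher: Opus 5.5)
Your proof is correct and is exactly the $\delta=0$ specialization of the induction in the proof of Lemma~\ref{lem:wd-iter}; the paper gives no separate proof here, only the citation. You correctly identify that $a=\mu\|v\|$ replaces $\mu\|v\|(1+\alpha_0)$ because the $\delta/\alpha_k$ term disappears, and your added hypothesis $r>C''\alpha_0\|v\|$, with ball membership carried through a step-by-step induction, is the right way to make the argument rigorous, since the statement as quoted leaves it implicit.
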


\begin{lemma}\label{lem:nf-stability}
Assume the conditions of Lemma~\ref{lem:nf-wd} and
\[
\max\Bigl\{(\mu+1)\Bigl(\frac{C''}{2}+1\Bigr)L\|v\|,\
\frac{2L(1+\mu)\|v\|\bigl(1+\frac{\alpha_0C''}{2}\bigr)}{1-(\mu+1)\bigl(\frac{C''}{2}+1\bigr)L\|v\|}\Bigr\}<1 .
\]
Then, with $B_k:=\|\alpha_k(\alpha_kI+A)^{-1}e_0\|$, for all $k\ge0$,
\begin{equation}\label{eq:Bk-equiv}
\frac{\|x_k-x^\dg\|}{C_9}\le B_k\le C_{10}\|x_k-x^\dg\|
\qquad\text{and}\qquad
C_{11}\|x_k-x^\dg\|\le\|x_{k+1}-x^\dg\|\le C_{12}\|x_k-x^\dg\| ,
\end{equation}
where
\[
C_9=\frac{2(1+\mu)}{1-(\mu+1)\bigl(\frac{C''}{2}+1\bigr)L\|v\|},\quad
C_{10}=\frac{2}{1-L\bigl(1+\frac{\alpha_0C''}{2}\bigr)C_9\|v\|},
\]
\[
C_{11}=\frac{1}{C_9}-L\|v\|\Bigl(1+\frac{\alpha_0C''}{2}\Bigr),\qquad
C_{12}=C_{10}+L\|v\|\Bigl(1+\frac{\alpha_0C''}{2}\Bigr).
\]
\end{lemma}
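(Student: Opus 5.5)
The plan is to mirror the noise-free analogue of Proposition~\ref{prop:DE}, this time without any $\delta$ term, and then to obtain the reverse inequality by the same resolvent-comparison device that produced \eqref{eq:DE-BB}.

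\emph{Step 1: upper bound $\|e_k\|\le C_9B_k$.} With $\delta=0$, the decomposition \eqref{eq:decomp} gives
\[
e_{k+1}=\alpha_k(\alpha_kI+A)^{-1}e_0+\bigl(\alpha_k(\alpha_kI+A_k)^{-1}-\alpha_k(\alpha_kI+A)^{-1}\bigr)e_0-(\alpha_kI+A_k)^{-1}\bigl(F(x_k)-y-A_ke_k\bigr).
\]
Bound the middle term by $L\|e_k\|\|v\|$, as in \eqref{eq:middle-term}. Bound the last term by $L\|e_k\|^2/(2\alpha_k)\le (C''/2)L\|v\|\,\|e_k\|$, using Lemma~\ref{lem:nf-wd}. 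This yields
\[
\|e_{k+1}\|\le B_k+\varepsilon\|e_k\|,\qquad \varepsilon=\Bigl(\tfrac{C''}{2}+1\Bigr)L\|v\|.
\]
Next, $B_k\le\mu B_{k+1}$ by monotonicity of $\alpha\mapsto\alpha(\alpha+\lambda)^{-1}$; this is where self-adjointness enters. Hence
\[
\eta_{k+1}:=\frac{\|e_{k+1}\|}{B_{k+1}}\le\mu+\mu\varepsilon\,\frac{\|e_k\|}{B_{k+1}}.
\]
Here I expect the constant $(\mu+1)$ in $C_9$ to arise from a slightly different bookkeeping. I would instead induct directly on $\|e_k\|\le C_9B_k$, splitting as $\|e_{k+1}\|\le B_k+\varepsilon\|e_k\|\le(1+\varepsilon C_9)B_k\le\mu(1+\varepsilon C_9)B_{k+1}$. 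This closes whenever $\mu(1+\varepsilon C_9)\le C_9$, and in particular for the stated $C_9=2(1+\mu)/(1-(\mu+1)\varepsilon)$, given $(\mu+1)\varepsilon<1$. The base case is $B_0\ge\|e_0\|/2$, by the scaling \eqref{eq:scaling}.

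\emph{Step 2: lower bound $B_k\le C_{10}\|e_k\|$.} This is the main obstacle, since the one-step relation runs the wrong way. I would try the perturbation route first. Write $B_k\le\|\alpha_k(\alpha_kI+A_k)^{-1}e_0\|+L\|e_k\|\|v\|$. Then express $\alpha_k(\alpha_kI+A_k)^{-1}e_0$ through the iteration: it equals $e_{k+1}$ plus the Taylor remainder term. One would rather want it in terms of $e_k$ itself, so I would use the Lavrentiev-type identity for $e_k$ coming from step $k-1$ (with $\alpha_{k-1}\le\mu\alpha_k$). The result should be an inequality
\[
B_k\le2\|e_k\|+L\Bigl(1+\tfrac{\alpha_0C''}{2}\Bigr)\|v\|\,(\text{multiple of } \|e_k\|+B_k).
\]
Inserting Step~1 to control any $B$-terms by $\|e\|$-terms, or vice versa, and absorbing gives the denominator $1-L(1+\alpha_0C''/2)C_9\|v\|$ appearing in $C_{10}$. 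Positivity of that denominator is the second hypothesis in the max.

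\emph{Step 3: the ratio bounds.} These follow from Steps~1--2 combined with the one-step recursion. For the upper bound,
\[
\|e_{k+1}\|\le B_k+L\Bigl(1+\tfrac{\alpha_0C''}{2}\Bigr)\|v\|\,\|e_k\|\le C_{12}\|e_k\|.
\]
For the lower bound, the reverse triangle inequality in the same decomposition gives $\|e_{k+1}\|\ge B_k-L(1+\alpha_0C''/2)\|v\|\|e_k\|\ge C_{11}\|e_k\|$, using $B_k\ge\|e_k\|/C_9$. In both directions the Taylor term is handled as $L\|e_k\|^2/(2\alpha_k)\le\frac{\alpha_0C''}{2}\cdot(\dots)$ via Lemma~\ref{lem:nf-wd}.
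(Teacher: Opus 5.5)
Your Steps~1 and~3 are sound and agree with the paper. In Step~1 you use $B_k\le\mu B_{k+1}$ where the paper uses the cruder $B_k/B_{k+1}\le\mu+1$, and both inductions close with the stated $C_9$.

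The gap is Step~2, which you flag as the main obstacle but do not carry out. The inequality $B_k\le2\|e_k\|+\dots$ is stated as an expectation, with its constants read off from $C_{10}$; it is not derived. The one comparison you name, $\alpha_{k-1}\le\mu\alpha_k$, gives $B_{k-1}\le\mu B_k$. That bounds the earlier quantity by the later one. Bounding $B_k$ from above by anything the step-$(k-1)$ identity controls needs the opposite comparison, $B_k\le B_{k-1}$. Moreover, that identity controls its perturbation terms by $\|e_{k-1}\|$, not by $\|e_k\|$ and $B_k$. Suppose you converted $\|e_{k-1}\|\le C_9B_{k-1}\le C_9\mu B_k$ to reach the shape you predict, with $\varepsilon':=L(1+\frac{\alpha_0C''}{2})\|v\|$. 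The absorption would then need $\mu\varepsilon'C_9<1$, which is stronger than the stated hypothesis and does not reproduce $C_{10}$.

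The missing idea is that you need not express $B_k$ through $e_k$ at the same index. The observation you discarded, that $\alpha_k(\alpha_kI+A_k)^{-1}e_0$ equals $e_{k+1}$ plus the Taylor term, already finishes the job. The reverse triangle inequality you use in Step~3, combined with Step~1, gives
\[
\|e_{k+1}\|\ \ge\ B_k-\varepsilon'\|e_k\|\ \ge\ (1-\varepsilon'C_9)\,B_k .
\]
Since $\alpha_{k+1}\le\alpha_k$ and $\alpha\mapsto\alpha(\alpha+\lambda)^{-1}$ is non-decreasing, $B_{k+1}\le B_k$. (The paper writes $B_{k+1}\le2B_k$, whence the factor $2$ in $C_{10}$.) Therefore $B_{k+1}\le\|e_{k+1}\|/(1-\varepsilon'C_9)\le C_{10}\|e_{k+1}\|$ for all $k\ge0$. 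The index $0$ is trivial because $B_0\le\|e_0\|$. Positivity of $1-\varepsilon'C_9$ is exactly the second hypothesis, as you noted. This is the paper's proof. Your step-$(k-1)$ idea becomes the same argument shifted by one index once you use $B_k\le B_{k-1}$ and absorb $B_{k-1}$ rather than $B_k$.

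One further caution concerns the constants. Lemma~\ref{lem:nf-wd} bounds the Taylor term by $\frac{C''}{2}L\|v\|\,\|e_k\|$, as in your own Step~1, not by $\frac{\alpha_0C''}{2}L\|v\|\,\|e_k\|$ as you assert in Step~3. The paper's one-step difference estimate carries the same factor $\alpha_0$. So this argument justifies $C_{10}$, $C_{11}$, $C_{12}$ and the second hypothesis as written only when $\alpha_0\ge1$; otherwise $\alpha_0C''/2$ should be replaced by $C''/2$.
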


\begin{proof}
From \eqref{eq:iter-nf}, Proposition~\ref{prop:taylor} and Lemma~\ref{lem:nf-wd},
\begin{equation}\label{eq:nf-recur}
\|x_{k+1}-x^\dg\|\le B_k+\Bigl(\frac{C''}{2}+1\Bigr)L\|v\|\,\|x_k-x^\dg\| .
\end{equation}
Dividing by $B_{k+1}$ and using $B_k/B_{k+1}\le\mu+1$ (which follows from the resolvent identity and \eqref{eq:alphas}),
\[
\frac{\|x_{k+1}-x^\dg\|}{B_{k+1}}\le(1+\mu)+(\mu+1)\Bigl(\frac{C''}{2}+1\Bigr)L\|v\|\,\frac{\|x_k-x^\dg\|}{B_k}.
\]
The scaling condition \eqref{eq:scaling} gives $\|A+\alpha_0I\|\le2\alpha_0$ and hence
$B_0\ge\alpha_0\|e_0\|/\|A+\alpha_0I\|\ge\|e_0\|/2$, so that $\|x_0-x^\dg\|/B_0\le2\le C_9$. By induction,
$\|x_k-x^\dg\|\le C_9B_k$ for all $k\ge0$, which is the left inequality in \eqref{eq:Bk-equiv}.

Next, from
\begin{multline*}
x_{k+1}-x^\dg=\alpha_k(\alpha_kI+A)^{-1}e_0
+\alpha_k\bigl((\alpha_kI+A_k)^{-1}-(\alpha_kI+A)^{-1}\bigr)Av\\
-(\alpha_kI+A_k)^{-1}\bigl(F(x_k)-y-A_k(x_k-x^\dg)\bigr)
\end{multline*}
we obtain, as in \eqref{eq:middle-term} and by Proposition~\ref{prop:taylor} and Lemma~\ref{lem:nf-wd},
\begin{equation}\label{eq:nf-diff}
\bigl|\,\|x_{k+1}-x^\dg\|-B_k\,\bigr|\le L\Bigl(1+\frac{\alpha_0C''}{2}\Bigr)\|v\|\,\|x_k-x^\dg\| .
\end{equation}
Combining \eqref{eq:nf-diff} with $\|x_k-x^\dg\|\le C_9B_k$ yields
$\|x_{k+1}-x^\dg\|\ge\bigl(1-L(1+\tfrac{\alpha_0C''}{2})C_9\|v\|\bigr)B_k$, and since the resolvent identity together with \eqref{eq:alphas} gives $B_{k+1}\le2B_k$, we get
$B_{k+1}\le C_{10}\|x_{k+1}-x^\dg\|$, i.e., the right inequality in \eqref{eq:Bk-equiv}. Finally, \eqref{eq:nf-diff} with $B_k\le C_{10}\|x_k-x^\dg\|$ and $B_k\ge\|x_k-x^\dg\|/C_9$ gives the two-sided estimate for $\|x_{k+1}-x^\dg\|$.
\end{proof}

\section{Towards an oracle inequality, and convergence rates under source conditions}\label{sec:rates}

Throughout this section, $k_*$ denotes the index selected by Rule~\ref{rule:heuristic} and $\delta_*$ the quantity \eqref{eq:deltastar}. We continue to work under the range condition \eqref{eq:range}; see Remark~\ref{rem:p-usage}.

\begin{lemma}[{cf.\ \cite[Lemma~5.1]{mahale-singh}}]\label{lem:noisy-vs-nf}
Assume Assumption~\ref{ass:nonlinearity}, the range condition \eqref{eq:range}, and the assumptions of Lemmas~\ref{lem:wd-kdstar} and~\ref{lem:nf-wd}, and suppose
\[
C_{13}:=\Bigl((1+LC''\|v\|)+\frac{LC'^2}{2}\|v\|+\frac{C'+3C''}{2}\Bigr)L\|v\|<1 .
\]
Then for every $0\le k\le k_*^\delta$,
\begin{equation}\label{eq:noisy-nf}
\|x_k^\delta-x_k\|\le\frac{1}{1-C_{13}}\,\frac{\delta}{\alpha_k}.
\end{equation}
\end{lemma}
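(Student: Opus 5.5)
We argue by induction on $k$, comparing the noisy iteration \eqref{eq:iter} with the noise-free one \eqref{eq:iter-nf} step by step. For $k=0$ both sequences start at $x_0$, so the claim is trivial. The inductive bound will be closed through the ratio $\|x_k^\delta-x_k\|/(\delta/\alpha_k)$, which we keep below $1/(1-C_{13})$ for all $0\le k\le k_*^\delta$. Throughout, Lemma~\ref{lem:wd-kdstar} gives $\|e_k^\delta\|\le C'\alpha_k\|v\|$ for these $k$. Lemma~\ref{lem:nf-wd} gives $\|x_k-x^\dg\|\le C''\alpha_k\|v\|$, so all points involved lie in $B_r(x^\dg)$.

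The first step is an error decomposition. Using $x_{k+1}^\delta-x^\dg=\alpha_k(\alpha_kI+A_k^\delta)^{-1}e_0-(\alpha_kI+A_k^\delta)^{-1}\{F(x_k^\delta)-y^\delta-A_k^\delta e_k^\delta\}$ and the analogous noise-free formula with $A_k$, we subtract the two to obtain
\[
x_{k+1}^\delta-x_{k+1}=J_1+J_2+J_3,
\]
where:
\begin{itemize}
\item $J_1:=\alpha_k\bigl((\alpha_kI+A_k^\delta)^{-1}-(\alpha_kI+A_k)^{-1}\bigr)Av$ is the difference of the resolvents applied to $e_0=Av$;
\item $J_2:=(\alpha_kI+A_k^\delta)^{-1}(y^\delta-y)$ is the noise term;
\item $J_3$ is the difference of the two linearization remainders, $(\alpha_kI+A_k)^{-1}R_k-(\alpha_kI+A_k^\delta)^{-1}R_k^\delta$, with $R_k=F(x_k)-y-A_k(x_k-x^\dg)$ and $R_k^\delta$ defined in the same way.
\end{itemize}
We bound each term with the tools already used in this paper:
\begin{itemize}
\item By the resolvent identity, \eqref{eq:resolvent} and \eqref{eq:lipschitz}, exactly as in \eqref{eq:middle-term}, $\|J_1\|\le L\|v\|\,\|x_k^\delta-x_k\|$.
\item By \eqref{eq:resolvent}, $\|J_2\|\le\delta/\alpha_k$.
\end{itemize}

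The main obstacle is $J_3$, since it must produce a factor $\|x_k^\delta-x_k\|$ rather than the crude $\|e_k\|^2/\alpha_k$. We split $J_3$ further into two pieces. The first is $(\alpha_kI+A_k^\delta)^{-1}(R_k-R_k^\delta)$. Here we write $R_k-R_k^\delta=F(x_k)-F(x_k^\delta)-A_k(x_k-x_k^\delta)+(A_k^\delta-A_k)e_k^\delta$ and use the integral form of the Taylor remainder with the Lipschitz condition:
\[
\|F(x_k)-F(x_k^\delta)-A_k(x_k-x_k^\delta)\|\le\tfrac L2\|x_k-x_k^\delta\|^2\le\tfrac L2(\|e_k^\delta\|+\|e_k\|)\|x_k-x_k^\delta\|.
\]
In addition, $\|(A_k^\delta-A_k)e_k^\delta\|\le L\|x_k^\delta-x_k\|\|e_k^\delta\|$. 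Dividing by $\alpha_k$ and inserting the bounds $C'\alpha_k\|v\|$ and $C''\alpha_k\|v\|$ yields a contribution of the form $\frac{C'+3C''}{2}L\|v\|\,\|x_k^\delta-x_k\|$ up to the bookkeeping. The second piece is $\bigl((\alpha_kI+A_k)^{-1}-(\alpha_kI+A_k^\delta)^{-1}\bigr)R_k$, which we handle with the resolvent identity. Combining $\|R_k\|\le\frac L2\|e_k\|^2$ and $\|A_k-A_k^\delta\|\le L\|x_k-x_k^\delta\|$ with the resolvent bounds gives a factor of order $(LC''\|v\|+\tfrac{LC'^2}{2}\|v\|)L\|v\|$. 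Where the cruder $\|e_k^\delta\|$ bound enters, that step is matched to the term $\frac{LC'^2}{2}\|v\|$ in $C_{13}$. Altogether we expect
\[
\|x_{k+1}^\delta-x_{k+1}\|\le\frac{\delta}{\alpha_k}+C_{13}\|x_k^\delta-x_k\|.
\]

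To close the induction, we insert the hypothesis $\|x_k^\delta-x_k\|\le\frac{1}{1-C_{13}}\frac{\delta}{\alpha_k}$ and use $\alpha_{k+1}\le\alpha_k$, hence $\delta/\alpha_k\le\delta/\alpha_{k+1}$. This gives
\[
\|x_{k+1}^\delta-x_{k+1}\|\le\Bigl(1+\frac{C_{13}}{1-C_{13}}\Bigr)\frac{\delta}{\alpha_{k+1}}=\frac{1}{1-C_{13}}\frac{\delta}{\alpha_{k+1}},
\]
which is \eqref{eq:noisy-nf} at $k+1$. The only genuinely delicate point is the exact grouping of the Lipschitz terms in $J_3$ so that the accumulated coefficient is exactly $C_{13}<1$, with no stray factor $\mu$. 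If the bookkeeping produced an extra $\mu$, we would instead argue directly with the non-increasing property of $(\alpha_k)$ as above, since the noise term is divided by $\alpha_k$ and not $\alpha_{k+1}$.
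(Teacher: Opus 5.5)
Your architecture matches the paper's. You subtract the two iterations, separate the resolvent difference acting on $e_0=Av$, the noise term and the difference of linearization remainders, derive $\|x_{k+1}^\delta-x_{k+1}\|\le\delta/\alpha_k+C_{13}\|x_k^\delta-x_k\|$, and close the induction with $\alpha_{k+1}\le\alpha_k$ (you are right that no factor $\mu$ is needed there). However, the bookkeeping you defer is exactly where your version fails to produce $C_{13}$, in two places.

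First, $\|J_1\|\le L\|v\|\,\|x_k^\delta-x_k\|$ does not follow ``exactly as in \eqref{eq:middle-term}''. There one resolvent is taken at $A$ itself, so $\|(\alpha_kI+A)^{-1}Av\|\le\|v\|$. In $J_1$ both resolvents are at $A_k^\delta$ and $A_k$, and $\|(\alpha_kI+A_k)^{-1}A\|$ is not bounded by $1$. You must split $Av=A_kv+(A-A_k)v$, which gives $\|J_1\|\le(1+LC''\|v\|)L\|v\|\,\|x_k^\delta-x_k\|$. This, not the remainder terms, is where the summand $LC''\|v\|$ of $C_{13}$ comes from.

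Second, in the first piece of $J_3$ you Taylor-expand at $x_k$ and let $A_k^\delta-A_k$ act on $e_k^\delta$. This gives $\frac{L}{\alpha_k}\bigl(\tfrac32\|e_k^\delta\|+\tfrac12\|x_k-x^\dg\|\bigr)\|x_k^\delta-x_k\|\le\frac{3C'+C''}{2}L\|v\|\,\|x_k^\delta-x_k\|$, not $\frac{C'+3C''}{2}$. The roles of $C'$ and $C''$ matter, since $C'/C''=1+\alpha_0/c>1$. Your second piece of $J_3$, correctly computed, gives $\frac{LC''^2}{2}\|v\|\cdot L\|v\|$. Even with $J_1$ corrected, the total one-step coefficient then exceeds $C_{13}$ by $(C'-C'')\bigl(1-\tfrac{L\|v\|}{2}(C'+C'')\bigr)L\|v\|$. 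This is strictly positive, because $C_{13}<1$ forces $(C'+3C'')L\|v\|<2$. So the hypothesis $C_{13}<1$ does not guarantee that your recursion is contractive, and the lemma as stated is not yet proved.

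The repair is to rewrite the remainder difference the other way round:
$R_k-R_k^\delta=\{F(x_k)-F(x_k^\delta)-A_k^\delta(x_k-x_k^\delta)\}+(A_k^\delta-A_k)(x_k-x^\dg)$.
That is, expand at $x_k^\delta$ and let the derivative difference act on the noise-free error. Using $\|x_k^\delta-x_k\|\le(C'+C'')\alpha_k\|v\|$, this gives $\frac{C'+3C''}{2}L\|v\|\,\|x_k^\delta-x_k\|$, as in the paper's term $I_4$.

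Your split of $J_3$ mirrors the paper's. The paper puts the resolvent difference on the noisy remainder $\tilde u_k^\delta$ and gets $\frac{LC'^2}{2}\|v\|\cdot L\|v\|$. Yours puts it on $R_k$ and gets $\frac{LC''^2}{2}\|v\|\cdot L\|v\|\le\frac{LC'^2}{2}\|v\|\cdot L\|v\|$, so it is slightly better. With the two corrections above, the one-step coefficient is at most $C_{13}$ and your induction closes as written.
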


\begin{proof}
We follow \cite[Lemma~5.1]{mahale-singh}. Subtracting the noise-free iteration \eqref{eq:iter-nf} from the noisy iteration \eqref{eq:iter} and using $e_0=Av$, one obtains the decomposition
\[
x_{k+1}^\delta-x_{k+1}=I_1+I_2+I_3+I_4
\]
with
\begin{align*}
I_1&=\alpha_k\bigl\{(\alpha_kI+A_k^\delta)^{-1}-(\alpha_kI+A_k)^{-1}\bigr\}Av, &
I_2&=(\alpha_kI+A_k^\delta)^{-1}(y^\delta-y),\\
I_3&=\bigl\{(\alpha_kI+A_k)^{-1}-(\alpha_kI+A_k^\delta)^{-1}\bigr\}\tilde u_k^\delta, &
I_4&=(\alpha_kI+A_k)^{-1}(\tilde u_k-\tilde u_k^\delta),
\end{align*}
where $\tilde u_k:=F(x_k)-y-A_k(x_k-x^\dg)$ and $\tilde u_k^\delta:=F(x_k^\delta)-y-A_k^\delta(x_k^\delta-x^\dg)$. We estimate the four terms separately.

\emph{Term $I_1$.} Splitting $Av=A_kv+(A-A_k)v$ and using the resolvent identity, \eqref{eq:resolvent}, \eqref{eq:lipschitz} and Lemma~\ref{lem:nf-wd} ($\|x_k-x^\dg\|\le C''\alpha_k\|v\|$),
\[
\|I_1\|\le L\|x_k^\delta-x_k\|\,\|v\|+\frac{L}{\alpha_k}\|x_k-x^\dg\|\cdot L\|x_k^\delta-x_k\|\,\|v\|
\le(1+LC''\|v\|)\,L\|x_k^\delta-x_k\|\,\|v\| .
\]

\emph{Term $I_2$.} By \eqref{eq:resolvent} and \eqref{eq:noise}, $\|I_2\|\le\delta/\alpha_k$.

\emph{Term $I_3$.} By the resolvent identity, \eqref{eq:resolvent}, \eqref{eq:lipschitz}, Proposition~\ref{prop:taylor} and Lemma~\ref{lem:wd-kdstar} ($\|x_k^\delta-x^\dg\|\le C'\alpha_k\|v\|$),
\[
\|I_3\|\le\frac{L\|x_k^\delta-x_k\|}{\alpha_k}\cdot\frac1{\alpha_k}\cdot\frac{L}{2}\|x_k^\delta-x^\dg\|^2
\le\frac{L^2C'^2\|v\|^2}{2}\,\|x_k^\delta-x_k\|
=\frac{LC'^2}{2}\|v\|\cdot L\|v\|\cdot\|x_k^\delta-x_k\| .
\]

\emph{Term $I_4$.} Writing
$\tilde u_k-\tilde u_k^\delta=\{F(x_k)-F(x_k^\delta)-A_k^\delta(x_k-x_k^\delta)\}+(A_k^\delta-A_k)(x_k-x^\dg)$,
Proposition~\ref{prop:taylor} and \eqref{eq:lipschitz} give
\[
\|\tilde u_k-\tilde u_k^\delta\|\le\frac{L}{2}\|x_k^\delta-x_k\|^2+L\|x_k^\delta-x_k\|\,\|x_k-x^\dg\| ,
\]
so that, by \eqref{eq:resolvent}, Lemmas~\ref{lem:wd-kdstar} and~\ref{lem:nf-wd}, and
$\|x_k^\delta-x_k\|\le\|x_k^\delta-x^\dg\|+\|x_k-x^\dg\|\le(C'+C'')\alpha_k\|v\|$,
\[
\|I_4\|\le\frac{L}{\alpha_k}\Bigl(\frac{\|x_k^\delta-x_k\|}{2}+\|x_k-x^\dg\|\Bigr)\|x_k^\delta-x_k\|
\le\frac{C'+3C''}{2}\,L\|v\|\,\|x_k^\delta-x_k\| .
\]

Collecting the four estimates,
\[
\|x_{k+1}^\delta-x_{k+1}\|\ \le\ \frac{\delta}{\alpha_k}+C_{13}\,\|x_k^\delta-x_k\| ,
\]
with $C_{13}$ as in the statement. Since $x_0^\delta=x_0$, an induction using $\alpha_k\le\mu\alpha_{k+1}$ and $C_{13}<1$ (as in \cite[Lemma~5.1]{mahale-singh}) yields \eqref{eq:noisy-nf}.
\end{proof}

\begin{proposition}\label{thm:oracle-pre}
Let Assumption~\ref{ass:nonlinearity}, Assumption~\ref{ass:noise} and the range condition \eqref{eq:range} hold, together with the assumptions of Theorem~\ref{thm:rate-range}, Lemma~\ref{lem:nf-stability} and Lemma~\ref{lem:noisy-vs-nf}, and suppose
\[
\frac{L}{C_{11}}\bigl(2+\alpha_0C''\bigr)\|v\|<1 .
\]
Then for every integer $k\ge k_*$,
\begin{equation}\label{eq:oracle-pre}
\|x_{k_*}-x^\dg\|\le C_{16}\Bigl(\|\alpha_{k_*}(\alpha_{k_*}I+A)^{-1}(F(x_{k_*})-y)\|
+\frac{(\sqrt{\delta_*}+\sqrt{\delta})^2}{\alpha_k}+\|x_k-x^\dg\|\Bigr),
\end{equation}
where
\[
C_{16}=\frac{1}{C_{15}}\max\Bigl\{2(\mu+1),\,C_{14}\mu,\,1+\frac{L}{C_{11}}\Bigl(1+\frac{\alpha_0C''}{2}\Bigr)\|v\|\Bigr\},
\]
\[
C_{14}=\frac{L}{2}\Bigl(C_8+\frac{\sqrt{\|e_0\|}}{\sqrt{c}\,(1-C_{13})}\Bigr)^{2},\qquad
C_{15}=1-\frac{L}{C_{11}}\bigl(2+\alpha_0C''\bigr)\|v\| .
\]
\end{proposition}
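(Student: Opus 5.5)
The plan is to work with the noise-free iterate $x_{k_*}$ and split its error with the Lavrentiev identity taken at the \emph{later} parameter $\alpha_k$, $k\ge k_*$. Writing $e_{k_*}:=x_{k_*}-x^\dg$, we have
\[
e_{k_*}=r_{\alpha_k}(A)\,e_{k_*}+g_{\alpha_k}(A)\,A\,e_{k_*}.
\]
Each piece will then be controlled by one of the three terms on the right of \eqref{eq:oracle-pre}.

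\emph{Step 1: express $e_{k_*}$ through the linear noise-free quantity.} Using the expansion of the noise-free step (the formula preceding \eqref{eq:nf-diff}, with index $k_*-1$), I write
\[
e_{k_*}=r_{\alpha_{k_*-1}}(A)e_0+\rho ,
\]
where by \eqref{eq:nf-diff} the remainder satisfies $\|\rho\|\le L(1+\alpha_0C''/2)\|v\|\,\|e_{k_*-1}\|$. By the lower bound of Lemma~\ref{lem:nf-stability}, $\|e_{k_*-1}\|\le\|e_{k_*}\|/C_{11}$. This is the origin of the factor $\frac{L}{C_{11}}(1+\frac{\alpha_0C''}{2})\|v\|$. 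Together with a second remainder of the same size produced in Step~3, these account for the total $\frac{L}{C_{11}}(2+\alpha_0C'')\|v\|$. At the end I absorb them into the left-hand side, and this is what produces $C_{15}$.

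\emph{Step 2: the first piece.} Since $r_{\alpha_k}(A)$ and $r_{\alpha_{k_*-1}}(A)$ commute and have norm at most one, I get $\|r_{\alpha_k}(A)r_{\alpha_{k_*-1}}(A)e_0\|\le B_k$. Then Lemma~\ref{lem:nf-stability} gives $B_k\le C_{10}\|x_k-x^\dg\|$. Alternatively, using $B_{k}\le(\mu+1)B_{k+1}$-type comparisons, this piece is bounded by a multiple $2(\mu+1)$ of $\|x_k-x^\dg\|$ (or of $B_{k_*}$). This yields the third term of \eqref{eq:oracle-pre}.

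\emph{Step 3: the second piece.} I replace $Ae_{k_*}$ by $F(x_{k_*})-y$ plus the Taylor remainder from Proposition~\ref{prop:taylor}. The part $g_{\alpha_k}(A)(F(x_{k_*})-y)$ is handled by splitting it into its $r_{\alpha_{k_*}}$-weighted part, which gives the first term of \eqref{eq:oracle-pre}, and a remainder that is again of the type in Step~1. The Taylor remainder is at most $\frac{L}{2\alpha_k}\|e_{k_*}\|^2$, and this is where the noisy information enters. By the triangle inequality,
\[
\|e_{k_*}\|\le\|x_{k_*}^\delta-x^\dg\|+\|x_{k_*}^\delta-x_{k_*}\|.
\]
The first term is at most $C_8(\sqrt{\delta_*}+\sqrt\delta)$ by Theorem~\ref{thm:rate-range}. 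The second is at most $\delta/((1-C_{13})\alpha_{k_*})$ by Lemma~\ref{lem:noisy-vs-nf}, which in turn is at most $\sqrt{\delta\|e_0\|/c}/(1-C_{13})$ by \eqref{eq:alphakstar}. Squaring gives exactly $C_{14}(\sqrt{\delta_*}+\sqrt\delta)^2/\alpha_k$. An extra factor $\mu$ appears when passing between $\alpha_{k_*-1}$ and $\alpha_{k_*}$.

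\emph{Main obstacle.} The delicate step is Step~3: the piece $g_{\alpha_k}(A)Ae_{k_*}$ has to be converted into the \emph{$\alpha_{k_*}$-weighted} noise-free residual and not merely into $\|F(x_{k_*})-y\|/\alpha_k$, which would be too large. I would first try the commuting functional-calculus bound
\[
\|g_{\alpha_k}(A)A\,r_{\alpha_{k_*-1}}(A)e_0\|\le\|r_{\alpha_{k_*}}(A)\,A\,e_{k_*}\|/\alpha_{k_*}\cdot(\text{const})\ +\ \|r_{\alpha_k}(A)e_0\|,
\]
using $\lambda/(\alpha_k+\lambda)\le1$ and the monotonicity of $r_\alpha$ in $\alpha$. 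If this is not tight enough, I would fall back on bounding that part by $B_k$ plus the weighted residual. The remaining work is collecting the constants into the maximum defining $C_{16}$ and dividing by $C_{15}$.
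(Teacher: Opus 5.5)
Your decomposition is sound, and it is the paper's argument in different bookkeeping. The paper writes $x_{k_*}-x_k$ through $J=\bigl(r_{\alpha_{k_*-1}}(A)-r_{\alpha_{k-1}}(A)\bigr)e_0=\bigl(1-\tfrac{\alpha_{k-1}}{\alpha_{k_*-1}}\bigr)g_{\alpha_{k-1}}(A)A\,r_{\alpha_{k_*-1}}(A)e_0$ and splits this into $J_1,J_2,J_3$. Your Step~1 remainder is its $J_3$. Your Taylor term is its $J_2$, handled with the same ingredients (Theorem~\ref{thm:rate-range}, Lemma~\ref{lem:noisy-vs-nf}, \eqref{eq:alphakstar}).

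The gap is exactly the step you flagged, the bound for $g_{\alpha_k}(A)(F(x_{k_*})-y)$, and neither suggestion works.
\begin{itemize}
\item Splitting off $r_{\alpha_{k_*}}(A)(F(x_{k_*})-y)$ leaves $g_{\alpha_k}(A)g_{\alpha_{k_*}}(A)A(F(x_{k_*})-y)$. This is not a Step~1 remainder: it has no small factor $L\|v\|$. For linear $F$ its symbol applied to $e_{k_*}$ is $\lambda^2/((\alpha_k+\lambda)(\alpha_{k_*}+\lambda))$, which is close to $1$ on the upper spectrum, so it cannot be absorbed.
\item Your displayed candidate inequality is false. For linear $F$ one has $e_{k_*}=r_{\alpha_{k_*-1}}(A)e_0$. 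Take $e_0$ an eigenvector with eigenvalue $\lambda=\varepsilon\alpha_{k_*}$, and $k$ with $\alpha_k=\varepsilon^2\alpha_{k_*}$. The left side is at least $(1+\varepsilon)^{-2}\|e_0\|$, while the right side is at most $(\mathrm{const}+1)\,\varepsilon\|e_0\|$.
\end{itemize}

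The missing idea is a single operator-norm bound, with no splitting. Factor
$(\alpha_kI+A)^{-1}=\bigl[(\alpha_kI+A)^{-1}\alpha_{k_*}^{-1}(\alpha_{k_*}I+A)\bigr]\,\alpha_{k_*}(\alpha_{k_*}I+A)^{-1}$ and use, for $k\ge k_*$,
\[
\sup_{\lambda\ge0}\frac{\alpha_{k_*}+\lambda}{\alpha_{k_*}(\alpha_k+\lambda)}=\max\Bigl\{\frac1{\alpha_k},\frac1{\alpha_{k_*}}\Bigr\}=\frac1{\alpha_k}.
\]
This gives $\|g_{\alpha_k}(A)(F(x_{k_*})-y)\|\le\alpha_k^{-1}\|\alpha_{k_*}(\alpha_{k_*}I+A)^{-1}(F(x_{k_*})-y)\|$. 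It is the paper's treatment of $J_1$, done at level $\alpha_{k-1}$ with the cruder factor $2(\mu+1)/\alpha_k$.

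Note that this yields the first term with a factor $1/\alpha_k$. So your claim that the weighted part gives the first term of \eqref{eq:oracle-pre} as displayed is not right. The paper's own proof also obtains the factor $1/\alpha_k$ (see \eqref{eq:J1}), which is what the completion discussed in Subsection~\ref{subsec:obstruction} needs. The statement should be read with that factor.

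With this fix your route closes, and it is slightly leaner than the paper's. In the second piece you can replace $Ae_{k_*}$ directly by the residual plus the Taylor term, so only the Step~1 remainder has to be absorbed.

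To stay within the stated $C_{16}$, change how you bound the first piece. Use $\|r_{\alpha_k}(A)r_{\alpha_{k_*-1}}(A)e_0\|\le\|r_{\alpha_{k-1}}(A)e_0\|$, then the noise-free representation at index $k$ together with $\|x_{k-1}-x^\dg\|\le\|x_k-x^\dg\|/C_{11}$. Do not use $B_k\le C_{10}\|x_k-x^\dg\|$: $C_{10}$ does not occur in $C_{16}$. Also, the entry $2(\mu+1)$ in $C_{16}$ multiplies the weighted residual, not $\|x_k-x^\dg\|$.
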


\begin{proof}
Let $k\ge k_*$. From the noise-free iteration \eqref{eq:iter-nf} one has, for any index $m\ge1$, the representation
\begin{multline*}
x_m-x^\dg=\alpha_{m-1}(\alpha_{m-1}I+A)^{-1}e_0
+\alpha_{m-1}\bigl\{(\alpha_{m-1}I+A_{m-1})^{-1}-(\alpha_{m-1}I+A)^{-1}\bigr\}Av\\
-(\alpha_{m-1}I+A_{m-1})^{-1}\bigl\{F(x_{m-1})-y-A_{m-1}(x_{m-1}-x^\dg)\bigr\}.
\end{multline*}
Subtracting this representation at $m=k_*$ and at $m=k$, and estimating the two perturbation terms at each index by \eqref{eq:middle-term} and Proposition~\ref{prop:taylor} together with Lemma~\ref{lem:nf-wd} (which gives $\|x_{m-1}-x^\dg\|/(2\alpha_{m-1})\le C''\|v\|/2$), we obtain
\begin{equation}\label{eq:xkstar-xk}
\|x_{k_*}-x_k\|\le\|J\|+\Bigl(1+\frac{C''}{2}\Bigr)L\|v\|\bigl(\|x_{k_*-1}-x^\dg\|+\|x_{k-1}-x^\dg\|\bigr),
\end{equation}
where
\[
J:=\bigl(\alpha_{k_*-1}(\alpha_{k_*-1}I+A)^{-1}-\alpha_{k-1}(\alpha_{k-1}I+A)^{-1}\bigr)e_0 .
\]
We decompose $J=J_1+J_2+J_3$ with
\begin{align*}
J_1&=\Bigl(1-\frac{\alpha_{k-1}}{\alpha_{k_*-1}}\Bigr)(\alpha_{k-1}I+A)^{-1}\bigl(F(x_{k_*})-y\bigr),\\
J_2&=\Bigl(1-\frac{\alpha_{k-1}}{\alpha_{k_*-1}}\Bigr)(\alpha_{k-1}I+A)^{-1}\bigl(A(x_{k_*}-x^\dg)-F(x_{k_*})+y\bigr),\\
J_3&=\Bigl(1-\frac{\alpha_{k-1}}{\alpha_{k_*-1}}\Bigr)(\alpha_{k-1}I+A)^{-1}A\bigl(\alpha_{k_*-1}(\alpha_{k_*-1}I+A)^{-1}e_0-(x_{k_*}-x^\dg)\bigr).
\end{align*}
By Proposition~\ref{prop:taylor} and \eqref{eq:resolvent},
$\|J_2\|\le\frac{L}{2\alpha_{k-1}}\|x_{k_*}-x^\dg\|^2$.
Splitting $\|x_{k_*}-x^\dg\|\le\|x_{k_*}-x_{k_*}^\delta\|+\|x_{k_*}^\delta-x^\dg\|$ and applying Theorem~\ref{thm:rate-range}, Lemma~\ref{lem:noisy-vs-nf} and the bound
$\delta/\alpha_{k_*}\le\sqrt{\|e_0\|}\sqrt{\delta}/\sqrt{c}$ (which follows from $k_*\le k_*^\delta$ and \eqref{eq:kdstar}),
\begin{equation}\label{eq:J2}
\|J_2\|\le\frac{C_{14}}{\alpha_{k-1}}\bigl(\sqrt{\delta_*}+\sqrt{\delta}\bigr)^2 .
\end{equation}
For $J_3$, using \eqref{eq:resolvent}, \eqref{eq:nf-diff} and Lemma~\ref{lem:nf-stability},
\begin{equation}\label{eq:J3}
\|J_3\|\le L\Bigl(1+\frac{\alpha_0C''}{2}\Bigr)\|v\|\,\|x_{k_*-1}-x^\dg\|
\le\frac{L}{C_{11}}\Bigl(1+\frac{\alpha_0C''}{2}\Bigr)\|v\|\,\|x_{k_*}-x^\dg\| .
\end{equation}
For $J_1$, since $|1-\alpha_{k-1}/\alpha_{k_*-1}|\le1$ (recall $\alpha_{k-1}\le\alpha_{k_*-1}$),
\[
\|J_1\|\le\bigl\|(\alpha_{k-1}I+A)^{-1}(F(x_{k_*})-y)\bigr\| .
\]
To pass from the resolvent at level $\alpha_{k-1}$ to the weighted residual at level $\alpha_{k_*}$, we write
\[
(\alpha_{k-1}I+A)^{-1}
=(\alpha_{k-1}I+A)^{-1}\,\alpha_{k_*}^{-1}(\alpha_{k_*}I+A)\cdot\alpha_{k_*}(\alpha_{k_*}I+A)^{-1}
\]
and estimate, using $\alpha_k\le\alpha_{k-1}\le\alpha_{k_*-1}\le\mu\alpha_{k_*}$ and \eqref{eq:resolvent},
\[
\bigl\|(\alpha_{k-1}I+A)^{-1}\alpha_{k_*}^{-1}(\alpha_{k_*}I+A)\bigr\|
\le\frac{1}{\alpha_{k-1}}+\frac{1}{\alpha_{k_*}}
\le\frac{1}{\alpha_k}+\frac{\mu}{\alpha_{k_*-1}}
\le\frac{1+\mu}{\alpha_k}\le\frac{2(\mu+1)}{\alpha_k} .
\]
Hence
\begin{equation}\label{eq:J1}
\|J_1\|\le\frac{2(\mu+1)}{\alpha_k}\,\bigl\|\alpha_{k_*}(\alpha_{k_*}I+A)^{-1}(F(x_{k_*})-y)\bigr\| .
\end{equation}
Combining \eqref{eq:xkstar-xk}--\eqref{eq:J1} with $\|x_{k_*}-x^\dg\|\le\|x_{k_*}-x_k\|+\|x_k-x^\dg\|$, absorbing the factor of $\|x_{k_*}-x^\dg\|$ into the left-hand side (which is possible by the hypothesis $C_{15}>0$), and using Lemma~\ref{lem:nf-stability} to bound $\|x_{k-1}-x^\dg\|\le\|x_k-x^\dg\|/C_{11}$, we arrive at \eqref{eq:oracle-pre}.
\end{proof}

\subsection{Obstructions to a full oracle inequality}\label{subsec:obstruction}

In the a~posteriori setting of \cite{mahale-singh}, Proposition~\ref{thm:oracle-pre} is completed to a full oracle inequality
\begin{equation}\label{eq:oracle-target}
\|x_{k_*}^\delta-x^\dg\|\ \le\ \xi\,\inf_{k\ge0}\Bigl\{\|x_k-x^\dg\|+\frac{(\sqrt{\delta_*}+\sqrt{\delta})^2}{\alpha_k}\Bigr\}
\end{equation}
by two further estimates for the noise-free weighted residual $a_k:=\|\alpha_k(\alpha_kI+A)^{-1}(F(x_k)-y)\|$: an upper bound $a_{k_\delta}\le C\delta$ at the stopping index and a lower bound $a_k\ge c\,\delta$ for $k<k_\delta$. In \cite{mahale-singh} both estimates are immediate consequences of the a~posteriori rule \eqref{eq:msrule}, which directly controls the weighted residual on both sides of the stopping index. We record here, explicitly, why the corresponding estimates are not available for the heuristic rule under the present assumptions; an earlier version of this manuscript overlooked both points, and we thank the referee for insisting on them.

\begin{itemize}
\item[(a)] \emph{No index comparison for residuals.} The minimizing property of $k_*$ yields only the scaled comparison $\delta_*/\alpha_{k_*}\le\|F(x_k^\delta)-y^\delta\|/\alpha_k$ for admissible $k$. Passing from $k_*$ to the a~priori index $k_*^\delta\ge k_*$ therefore introduces the factor $\alpha_{k_*}/\alpha_{k_*^\delta}$, which by \eqref{eq:kdstar} may be as large as a constant multiple of $\delta^{-1/2}$; this is exactly the content of the upper bound in Corollary~\ref{cor:deltastar}, and it cannot be improved to $\delta_*\le C\delta$ without additional information, since no monotonicity of the residuals $k\mapsto\|F(x_k^\delta)-y^\delta\|$ has been (or, we believe, can be) established under Assumption~\ref{ass:nonlinearity} alone.
\item[(b)] \emph{No lower bound for the weighted residual.} Assumption~\ref{ass:noise} controls the unweighted residual from below, $\|F(x_k^\delta)-y^\delta\|\ge\kappa\delta$. The operator $\alpha_k(\alpha_kI+A)^{-1}$ appearing in $a_k$ is a self-adjoint contraction whose spectrum reaches down to $\alpha_k/(\alpha_k+\|A\|)$, so it admits no uniform lower bound; a lower bound for $a_k$ would require information about the spectral localization of the residual, which the noise condition does not provide.
\end{itemize}

Establishing \eqref{eq:oracle-target} for the heuristic rule, or proving that it fails, therefore remains an open problem; Proposition~\ref{thm:oracle-pre} is the part of the argument that does carry over, and we state it here in the hope that it will be useful for future work in this direction.

We now derive convergence rates under H\"older-type source conditions. As explained in Remark~\ref{rem:p-usage}, the error estimate of Theorem~\ref{thm:rate-range} has been established under the range condition \eqref{eq:range}; since Assumption~\ref{ass:source} with $p\ge1$ implies the range condition, the following result is a direct consequence; no oracle inequality is needed for it.

\begin{theorem}\label{thm:hoelder}
Suppose Assumption~\ref{ass:source} holds with some $p\ge1$ such that $w:=A^{p-1}v\in\N(A)^\perp$ (for $p=1$, $w=v$ and this is \eqref{eq:range}), and let the assumptions of Theorem~\ref{thm:rate-range}, including the horizon condition $K_\infty\ge\hat k$, be satisfied with $v$ replaced by $w$. If $k_*\ge1$, then
\begin{equation}\label{eq:hoelder-rate}
\|x_{k_*}^\delta-x^\dg\|\ \le\ C_8(w)\,\bigl(\sqrt{\delta_*}+\sqrt{\delta}\bigr),
\end{equation}
where $C_8(w)$ denotes the constant of Theorem~\ref{thm:rate-range} formed with $w$ in place of $v$.
\end{theorem}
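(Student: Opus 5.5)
The plan is to reduce Theorem~\ref{thm:hoelder} to Theorem~\ref{thm:rate-range} by showing that the Hölder source condition with $p\ge1$ is a range condition in the sense of \eqref{eq:range}, with representer $w$. First I verify the algebraic identity $e_0=A^pv=A\,A^{p-1}v=Aw$. For integer $p$ this is immediate. For non-integer $p>1$ it follows from the spectral representation $A^p=\int\lambda^p\,dE_\lambda$ of Assumption~\ref{ass:source} and the multiplicativity of the functional calculus, $\lambda^p=\lambda\cdot\lambda^{p-1}$ for $\lambda\ge0$. This uses the self-adjointness and nonnegativity of $A=F'(x^\dg)$ from Assumption~\ref{ass:nonlinearity}(iv). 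Since $A^{p-1}$ is bounded (because $\|A\|\le\alpha_0$ by \eqref{eq:scaling}), $w$ is a well-defined element of $\X$. Together with the hypothesis $w\in\N(A)^\perp$, this shows that $e_0=Aw$ with $w\in\N(F'(x^\dg))^\perp$, which is exactly \eqref{eq:range} with $v$ replaced by $w$.

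Next I check that nothing in the chain of results leading to Theorem~\ref{thm:rate-range} uses $v$ except through the identity $e_0=Av$ and the norm $\|v\|$ entering the constants and smallness conditions. This covers Lemma~\ref{lem:wd-iter}, Lemma~\ref{lem:kstar-bound}, Lemma~\ref{lem:wd-kdstar}, Proposition~\ref{prop:DE} and Lemma~\ref{lem:Bk-noisy}. Going through them in order:
\begin{itemize}
\item the bound \eqref{eq:middle-term} uses only $e_0=Av$ and $\|v\|$;
\item the terms $I_1,I_2$ in the residual estimate use $e_0=\{A_k^\delta+(A-A_k^\delta)\}v$;
\item the square-completion in Lemma~\ref{lem:Bk-noisy} splits $e_0$ in the same way;
\item the inequality $\|e_0\|\le\alpha_0\|v\|$ comes from \eqref{eq:scaling}.
\end{itemize}
Replacing $v$ by $w$ throughout, with the hypothesis that all smallness conditions hold for $\|w\|$, every statement carries over verbatim. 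The constants $C,\tau,c,C',C_3,D,E,C_4,C_5,C_6,C_7$ are simply evaluated at $\|w\|$.

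The self-adjointness arguments are unaffected by the substitution. These are the monotonicity $B_k\le\mu B_{k+1}$ and the moving of $\alpha_k(\alpha_kI+A_k^\delta)^{-1}$ across the inner product. They concern only the operators, not the representer.

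Applying Theorem~\ref{thm:rate-range} with $w$ in place of $v$, under the horizon condition $K_\infty\ge\hat k$ and $k_*\ge1$, then yields \eqref{eq:hoelder-rate} with $C_8(w)$. The indices $\hat k$ and $k_*^\delta$ in \eqref{eq:khat} and \eqref{eq:kdstar} depend on $\|e_0\|$, not on $v$, so they are unchanged.

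The main (though mild) obstacle is making the identity $A^pv=A(A^{p-1}v)$ rigorous for non-integer $p$ when $A^p$ is introduced via the Balakrishnan formula composed with integer powers. I would handle it by appealing to the equivalence with the spectral definition stated in Assumption~\ref{ass:source}. A remark could follow noting that $\|w\|\le\alpha_0^{p-1}\|v\|$, so the smallness conditions in terms of $w$ are implied by those in terms of $v$. The remark should also note that the rate does not improve with $p$, which is the saturation referred to in the abstract.
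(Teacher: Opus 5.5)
Your proposal is correct and is the paper's argument: write $e_0=A^pv=A(A^{p-1}v)=Aw$ with $w\in\N(A)^\perp$, note that this is the range condition \eqref{eq:range} with representer $w$, and read off \eqref{eq:mainrate} from Theorem~\ref{thm:rate-range} with all constants formed from $w$. Your line-by-line recheck of the earlier lemmas is not needed, since $w$ simply is the range representer. Two side remarks are slightly off, though neither affects the proof. First, $k_*^\delta$ does depend on the representer, through $c=\kappa^2/(\mu^2\tau^2)$ with $\tau=\tau(\|w\|)$; this is harmless because only $\hat k$ enters the hypotheses. Second, $\|w\|\le\alpha_0^{p-1}\|v\|$ gives $\|w\|\le\|v\|$, and hence your closing ``implied smallness conditions'' claim, only when $\alpha_0\le1$.
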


\begin{proof}
Since $p\ge1$ we may write
\[
e_0=A^pv=A\bigl(A^{p-1}v\bigr)=Aw,\qquad w=A^{p-1}v\in\N(A)^\perp ,
\]
so the range condition \eqref{eq:range} holds with $v$ replaced by $w$. All hypotheses of Theorem~\ref{thm:rate-range} are satisfied by assumption (with the constants formed from $w$), and \eqref{eq:hoelder-rate} is precisely the estimate \eqref{eq:mainrate} of that theorem.

\end{proof}

\begin{remark}\label{rem:plessone}
Two comments on Theorem~\ref{thm:hoelder} are in order.

(i) For $p\ge1$ the rate \eqref{eq:hoelder-rate} coincides with the rate of Theorem~\ref{thm:rate-range}; higher smoothness $p>1$ does not improve the exponent. This reflects the well-known \emph{saturation} of Lavrentiev-type regularization at the qualification corresponding to $p=1$ in the present error decomposition, and is consistent with the a~posteriori results in \cite{mahale-nair09,mahale-singh}.

(ii) For $p\in(0,1)$ a balancing argument based on an oracle inequality of the form \eqref{eq:oracle-target} would formally suggest the rate $O\bigl((\sqrt{\delta_*}+\sqrt{\delta})^{2p/(1+p)}\bigr)$. However, no oracle inequality is available here (Subsection~\ref{subsec:obstruction}), and in addition the auxiliary estimates of Sections~\ref{sec:wd} and~\ref{sec:rates} (in particular Lemmas~\ref{lem:wd-iter}, \ref{lem:nf-stability} and Proposition~\ref{prop:DE}) make essential use of the representation $e_0=Av$. Establishing convergence rates for the heuristic rule under Assumption~\ref{ass:source} with $p<1$, without assuming \eqref{eq:range}, remains an open problem; we expect that this requires different techniques, cf.\ the discussion in \cite{jin-wang} for the Gauss--Newton method.
\end{remark}

\section{Numerical examples}\label{sec:numerics}

In this section we illustrate the performance of Rule~\ref{rule:heuristic} on one-, two-, and three-dimensional nonlinear problems with genuinely monotone forward operators, and we compare it with the a~posteriori stopping rule of discrepancy type
\begin{equation}\label{eq:dp}
k_\delta:=\min\{k\ge0:\ \|F(x_k^\delta)-y^\delta\|\le\tau\delta\},
\end{equation}
which requires knowledge of the noise level $\delta$ and of a suitable constant $\tau$, as well as with the weighted a~posteriori rule \eqref{eq:mnrule} analyzed in the author's earlier work \cite{mahale-singh}, i.e.,
\begin{equation}\label{eq:msrule}
k_\delta:=\min\bigl\{k\ge0:\ \bigl\|\alpha_k\bigl(A_k^\delta+\alpha_kI\bigr)^{-1}\bigl(F(x_k^\delta)-y^\delta\bigr)\bigr\|\le\tau\delta\bigr\},
\end{equation}
used there with $\tau=1.9$, which we adopt here as well. All three examples are instances of the following general class of monotone operators. Let $\Omega\subset\mathbb{R}^d$, $d\le3$, be a bounded domain, let $A:L^2(\Omega)\to L^2(\Omega)$ be a bounded linear operator which is also bounded as a map from $L^2(\Omega)$ into $C(\overline\Omega)$, and let $\varphi\in C^2(\mathbb{R})$ be non-decreasing. Define
\begin{equation}\label{eq:F-general}
F(x):=A^*\varphi(Ax),\qquad x\in L^2(\Omega),
\end{equation}
where $\varphi(Ax)$ denotes the superposition $(\varphi(Ax))(t)=\varphi\bigl((Ax)(t)\bigr)$.

\begin{proposition}\label{prop:general-class}
The operator $F$ in \eqref{eq:F-general} is well defined and monotone on $L^2(\Omega)$, Fr\'echet differentiable with
\begin{equation}\label{eq:F-derivative-general}
F'(x)h=A^*\bigl(\varphi'(Ax)\,Ah\bigr),\qquad h\in L^2(\Omega),
\end{equation}
and $F'$ is Lipschitz continuous on bounded subsets of $L^2(\Omega)$. In particular, Assumption~\ref{ass:nonlinearity} holds locally after the normalization discussed below \eqref{eq:resolvent}. We emphasize that $A$ is not required to be self-adjoint.
\end{proposition}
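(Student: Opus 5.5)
The plan is to verify the properties in the order well-definedness, monotonicity, differentiability, and Lipschitz continuity of the derivative. Each step uses only the two mapping properties of $A$ ($L^2\to L^2$ and $L^2\to C(\overline\Omega)$) and the regularity of $\varphi$.

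\emph{Well-definedness.} For $x\in L^2(\Omega)$ we have $Ax\in C(\overline\Omega)$ with $\|Ax\|_\infty\le M\|x\|$, where $M$ is the norm of $A$ as a map into $C(\overline\Omega)$. Hence $\varphi(Ax)$ is continuous and bounded on the bounded domain $\Omega$, so it lies in $L^2(\Omega)$, and $F(x)=A^*\varphi(Ax)\in L^2(\Omega)$.

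\emph{Monotonicity.} Moving $A^*$ across the inner product gives
\[
\ip{F(x)-F(\tilde x)}{x-\tilde x}=\int_\Omega\bigl(\varphi(Ax)-\varphi(A\tilde x)\bigr)\bigl(Ax-A\tilde x\bigr)\,dt .
\]
The integrand is pointwise nonnegative because $\varphi$ is non-decreasing, so the integral is $\ge0$.

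\emph{Differentiability.} Fix $x,h$ and set $u=Ax$, $w=Ah$, both continuous with $\|w\|_\infty\le M\|h\|$. By Taylor's theorem with the $C^2$ remainder,
\[
|\varphi(u+w)-\varphi(u)-\varphi'(u)w|\le\tfrac12\sup_{|s|\le\|u\|_\infty+\|w\|_\infty}|\varphi''(s)|\,w^2
\]
pointwise. Taking the $L^2$ norm and using $|\Omega|<\infty$ bounds the remainder by $C\|h\|^2$. Applying the bounded operator $A^*$ then shows that $F$ is Fr\'echet differentiable with derivative \eqref{eq:F-derivative-general}. The derivative is bounded, since $\varphi'(u)$ is a bounded multiplier.

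\emph{Lipschitz continuity and self-adjointness.} On a ball $\|x\|,\|z\|\le R$ we have
\[
\|F'(x)h-F'(z)h\|\le\|A^*\|\,\|\varphi'(Ax)-\varphi'(Az)\|_\infty\,\|Ah\|\le\|A^*\|\,\|A\|\,\sup_{|s|\le MR}|\varphi''(s)|\,M\|x-z\|\,\|h\|,
\]
using the mean value theorem for $\varphi'$, which is $C^1$. This gives the Lipschitz bound on bounded sets. Self-adjointness and nonnegativity follow from
\[
\ip{F'(x)h}{g}=\int_\Omega\varphi'(Ax)\,Ah\,Ag\,dt ,
\]
which is symmetric in $h,g$ and is nonnegative for $g=h$ because $\varphi'\ge0$. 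Neither this nor any earlier step uses self-adjointness of $A$.

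\emph{Normalization and Assumption~\ref{ass:nonlinearity}.} On $B_r(x^\dg)$, $\|F'(x)\|\le\|A\|^2\sup_{|s|\le M(\|x^\dg\|+r)}|\varphi'(s)|$ is bounded. Multiplying $F$ by a suitable $c>0$ therefore enforces \eqref{eq:scaling}, and this scaling preserves monotonicity and the other properties. So Assumption~\ref{ass:nonlinearity} holds locally. The main care point is making every sup-norm bound explicit through $M$; the $C(\overline\Omega)$-boundedness of $A$ is exactly what turns the superposition operator into a smooth map. Without it, $\varphi(Ax)$ need not be differentiable on $L^2$.
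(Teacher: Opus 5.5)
Your proof is correct and follows the paper's argument step for step: you use the same transfer of $A^*$ across the inner product for monotonicity, and the same sup-norm bound via $\|A\|_{L^2\to C}$ and $\sup|\varphi''|$ for the Lipschitz estimate. The only differences are that you prove Fr\'echet differentiability directly from the pointwise Taylor remainder rather than citing differentiability of the Nemytskii operator on $C(\overline\Omega)$, and that you verify self-adjointness and nonnegativity of $F'(x)$ (Assumption~\ref{ass:nonlinearity}(iv)) explicitly, which the paper leaves to Remark~\ref{rem:selfadjoint}(b).
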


\begin{proof}
Well-definedness follows since $Ax\in C(\overline\Omega)$, so $\varphi(Ax)\in C(\overline\Omega)\subset L^2(\Omega)$. For monotonicity, let $x,\tilde x\in L^2(\Omega)$ and set $u:=Ax$, $\tilde u:=A\tilde x$. Then
\[
\ip{F(x)-F(\tilde x)}{x-\tilde x}
=\ip{\varphi(u)-\varphi(\tilde u)}{u-\tilde u}
=\int_\Omega\bigl(\varphi(u(t))-\varphi(\tilde u(t))\bigr)\bigl(u(t)-\tilde u(t)\bigr)\,dt\ \ge\ 0,
\]
because $\varphi$ is non-decreasing, so the integrand is pointwise nonnegative. (No pointwise ordering of $x$ and $\tilde x$ and no self-adjointness of $A$ are required; the adjoint $A^*$ in \eqref{eq:F-general} moves onto the second argument of the inner product.)

Since $A:L^2(\Omega)\to C(\overline\Omega)$ is bounded and $\varphi\in C^2(\mathbb{R})$, the Nemytskii operator $u\mapsto\varphi(u)$ is Fr\'echet differentiable from $C(\overline\Omega)$ to $C(\overline\Omega)$, and the composition with the bounded linear maps $A$ and $A^*$ is Fr\'echet differentiable from $L^2(\Omega)$ to $L^2(\Omega)$ with derivative \eqref{eq:F-derivative-general}. For $x,z$ in a ball of radius $\rho$ in $L^2(\Omega)$, the values of $Ax$ and $Az$ lie in a compact interval $I_\rho\subset\mathbb{R}$, and with $M_\rho:=\sup_{I_\rho}|\varphi''|$,
\[
\|F'(x)-F'(z)\|\le\|A^*\|\,\|A\|\,\bigl\|\varphi'(Ax)-\varphi'(Az)\bigr\|_{C(\overline\Omega)}
\le M_\rho\,\|A^*\|\,\|A\|\,\|A\|_{L^2\to C}\,\|x-z\| ,
\]
which is the asserted Lipschitz property.
\end{proof}

The three examples below take, respectively: $A$ the (self-adjoint) Green's operator of $-u''$ on $(0,1)$ with $\varphi(u)=u^3$; $A$ the (self-adjoint) solution operator of a variable-coefficient diffusion problem on the unit square with $\varphi(u)=u+u^3$; and $A$ the (non-self-adjoint) solution operator of a convection--diffusion problem on the unit cube with $\varphi(u)=u+u^3$. In the self-adjoint cases $A^*=A$ and \eqref{eq:F-general} reduces to $F(x)=A\varphi(Ax)$.

In all experiments the noisy data is $y^\delta=y+\delta\chi$, where $\chi$ is the constant function on $\Omega$ normalized to $\|\chi\|_{L^2(\Omega)}=1$, so that $\|y-y^\delta\|=\delta$ exactly; the initial guess satisfies the range condition \eqref{eq:range},
\[
x_0=x^\dg+F'(x^\dg)v,
\]
where $v$ is a fixed smooth function (specified in each subsection) scaled so that $\|x_0-x^\dg\|\approx0.8$; the regularization parameters are $\alpha_k=\alpha_0(1/2)^k$ (as noted after Assumption~\ref{ass:nonlinearity}, the scaling condition \eqref{eq:scaling} is a normalization that can always be achieved by rescaling the equation; the values of $\alpha_0$ used below correspond to such rescalings); and the errors reported in the tables are $\|x_k^\delta-x^\dg\|_{L^2(\Omega)}$ at the respective stopping index. For the discrepancy rule \eqref{eq:dp} we use $\tau=1.9$, $\tau=0.8$, and $\tau=12$ (note that $\tau=0.8<1$ violates the standard requirement $\tau>1$ and is included to illustrate the effect of an inappropriate parameter, which is precisely the situation the heuristic rule is designed to avoid), and for the rule \eqref{eq:msrule} we use $\tau=1.9$ as in \cite{mahale-singh}. The computational horizon of Rule~\ref{rule:heuristic} (cf.\ Remark~\ref{rem:retrospective}) is $K=25$ in the one-dimensional and $K=24$ in the two- and three-dimensional experiments; in every run reported below, $\theta(k,y^\delta)$ was strictly increasing over the final third of the window, and enlarging $K$ did not change $k_*$. All linear systems in \eqref{eq:iter} are solved by a direct (LU) factorization of the dense discretized operator $F'(x_k^\delta)+\alpha_kI$. Following a referee's suggestion, we separate strictly two kinds of experiments.
\emph{(a) Matched-model experiments (Tables~\ref{tab:results}--\ref{tab:results3d} and~\ref{tab:noise}, Figures~\ref{fig:theta}--\ref{fig:3d}):} the exact data is $y=F_h(x^\dg)$, generated by the same discrete forward operator $F_h$ used in the reconstruction. This is deliberate: the theorems of Sections~\ref{sec:wd}--\ref{sec:rates} concern one fixed operator and its exact data, and only the matched setting tests them; the noise variable $\delta$ then coincides exactly with the total data error.
\emph{(b) Model-mismatch stress tests (Table~\ref{tab:mismatch}, Subsection~\ref{subsec:stress}):} the data comes from a different, more accurate discrete model: the closed form \eqref{eq:exact-data} in 1D (quadrature mismatch $\approx1.3\times10^{-7}$), and fine-grid computations in 2D and 3D (grids $63\times63$ and $23\times23\times23$, i.e., half the mesh size, with sparse LU solves; mismatch $\|F_h(x^\dg)-y\|\approx1.2\times10^{-6}$ and $\approx1.1\times10^{-5}$, respectively). The coarse-grid nodes coincide with every second fine-grid node, and the fine-to-coarse operation is the restriction to these coincident coarse nodes. In these runs the reported $\delta$ is the norm of the added perturbation only; the total discrete data error additionally contains the model mismatch, so these tables probe robustness rather than the $\delta\to0$ theorems.
Reproducibility details: all norms and inner products are the discrete $L^2$ products with quadrature weights ($w_i=h$, with $w_i=h/2$ at the endpoints, in 1D; the uniform weights $h^2$ and $h^3$ in 2D/3D). In 1D the discrete operator $A=GW$ (Nystr\"om) is self-adjoint with respect to this weighted inner product because the kernel matrix $G$ is symmetric; in 2D/3D the weights are uniform multiples of the identity, so the weighted adjoint coincides with the Euclidean matrix transpose, which is what the implementation uses ($F_h(x)=A^{\mathsf T}\varphi(Ax)$); no separate mass matrix is involved. The random seed for the noise realizations is $2026$; the complete scripts reproducing every table and figure accompany the paper. Finally, along every trajectory we monitor the \emph{empirical residual ratio}
\[
\rho_{\mathrm{emp}}:=\min_{0\le k\le K}\frac{\|F_h(x_k^\delta)-y^\delta\|}{\delta};
\]
in the matched setting (a) this is a \emph{finite-window empirical diagnostic} for the constant $\kappa$ of Assumption~\ref{ass:noise}: it probes the assumption only for $0\le k\le K$, whereas the assumption is stated over the complete orbit up to $k_\infty$, and a positive finite-window minimum cannot exclude a smaller ratio at a later iteration. It is therefore a diagnostic, not a numerical verification, of the assumption. In the mismatched setting (b) the assumption does not literally apply (the data is not exact data of $F_h$) and $\rho_{\mathrm{emp}}$ is reported merely as an empirical ratio; see Subsections~\ref{subsec:robust} and~\ref{subsec:stress}.

\subsection{A one-dimensional integral equation}\label{subsec:1d}

Let $K:L^2(0,1)\to L^2(0,1)$ denote the integral operator
\[
(Kx)(t)=\int_0^1G(t,s)\,x(s)\,ds,\qquad
G(t,s)=\begin{cases} s(1-t), & 0\le s\le t\le1,\\ t(1-s), & 0\le t\le s\le1,\end{cases}
\]
i.e., $G$ is the Green's function of $-u''$ with homogeneous Dirichlet boundary conditions, so that $K$ is compact, self-adjoint and positive semidefinite. Define
\begin{equation}\label{eq:F-example}
F(x):=K\bigl((Kx)^3\bigr),\qquad x\in L^2(0,1).
\end{equation}
Note that $F$ is well defined on all of $L^2(0,1)$: $K$ maps $L^2(0,1)$ boundedly into $C[0,1]$, so $(Kx)^3\in C[0,1]\subset L^2(0,1)$.

This is the operator \eqref{eq:F-general} with $A=K$ (self-adjoint) and $\varphi(u)=u^3$, so Proposition~\ref{prop:general-class} applies: $F$ is monotone on $L^2(0,1)$, Fr\'echet differentiable with $F'(x)h=K\bigl(3(Kx)^2\,Kh\bigr)$, and $F'$ is Lipschitz continuous on bounded sets.

\begin{remark}
The example used in earlier versions of this work and in parts of the literature, namely $\tilde F(x)=K(x^3)$, is not monotone on $L^2(0,1)$: for that operator one only has
$\ip{\tilde F(x)-\tilde F(\tilde x)}{x-\tilde x}=\ip{K(x^3-\tilde x^3)}{x-\tilde x}$, and since $K$ mixes function values at different points, the pointwise nonnegativity of $(x^3-\tilde x^3)(x-\tilde x)$ does not imply nonnegativity of the inner product. The sandwiched construction \eqref{eq:F-example} repairs this defect.
\end{remark}

For the experiment we choose the exact solution
\[
x^\dg(t)=4\sin(\pi t).
\]
Because $\sin(j\pi t)$ is an eigenfunction of $K$ with eigenvalue $(j\pi)^{-2}$, the exact data is available in closed form:
\[
Kx^\dg=\frac{4}{\pi^2}\sin(\pi t),\qquad
(Kx^\dg)^3=\frac{16}{\pi^6}\bigl(3\sin(\pi t)-\sin(3\pi t)\bigr),
\]
and hence
\begin{equation}\label{eq:exact-data}
y(t)=F(x^\dg)(t)=\frac{16}{\pi^8}\Bigl(3\sin(\pi t)-\frac{1}{9}\sin(3\pi t)\Bigr).
\end{equation}
Here $v(t)=c_1\sin(\pi t)$ with $c_1=300$, which yields $\|x_0-x^\dg\|\approx0.805$, and $\alpha_0=10^{-4}$. For the discretization, the interval $[0,1]$ is divided into $300$ equal subintervals and all integrals are approximated by the trapezoidal rule (Nystr\"om discretization $F_h$); the linear systems in \eqref{eq:iter} are solved directly. To remove any ambiguity about the data source: the matched-model experiments of this subsection (Table~\ref{tab:results}, Figures~\ref{fig:theta}--\ref{fig:reconstructions}, and the 1D rows of Table~\ref{tab:noise}) use $y=F_h(x^\dg)$, computed by the same trapezoidal Nystr\"om operator used for the reconstruction; the closed form \eqref{eq:exact-data} is evaluated only in the 1D rows of the mismatch stress test (Table~\ref{tab:mismatch}), where the quadrature error $\approx1.3\times10^{-7}$ constitutes the deliberate mismatch, and otherwise serves as an analytical reference. The matched results for $\delta$ between $10^{-4}$ and $10^{-7}$ are reported in Table~\ref{tab:results}.

\begin{table}[htb]
\centering
\caption{Matched-model results (setting (a)) for the one-dimensional example \eqref{eq:F-example}: stopping indices and errors for the discrepancy principle \eqref{eq:dp} with various $\tau$, for the weighted a~posteriori rule \eqref{eq:msrule} of \cite{mahale-singh} with $\tau=1.9$, and for the heuristic rule (Rule~\ref{rule:heuristic}); the last column reports the optimal (oracle) error $\min_{0\le k\le K}\|x_k^\delta-x^\dg\|$ along the computed trajectory.}
\label{tab:results}
\footnotesize
\begin{tabular}{c cc cc cc cc ccc c}
\toprule
& \multicolumn{6}{c}{discrepancy principle \eqref{eq:dp}} & \multicolumn{2}{c}{rule \eqref{eq:msrule}} & \multicolumn{3}{c}{Rule~\ref{rule:heuristic}} & \\
\cmidrule(lr){2-7}\cmidrule(lr){8-9}\cmidrule(lr){10-12}
& \multicolumn{2}{c}{$\tau=1.9$} & \multicolumn{2}{c}{$\tau=0.8$} & \multicolumn{2}{c}{$\tau=12$} & \multicolumn{2}{c}{$\tau=1.9$} & & & & optimal\\
\cmidrule(lr){2-3}\cmidrule(lr){4-5}\cmidrule(lr){6-7}\cmidrule(lr){8-9}
$\delta$ & $k_\delta$ & error & $k_\delta$ & error & $k_\delta$ & error & $k_\delta$ & error & $k_*$ & $\delta_*$ & error & error\\
\midrule
$1.00\times 10^{-4}$ & 2 & 0.7928 & 3 & 1.4887 & 1 & 0.4476 & 0 & 0.8054 & 2 & $8.02\times 10^{-5}$ & 0.7928 & 0.4476 \\
$5.00\times 10^{-5}$ & 2 & 0.3977 & 3 & 0.7469 & 2 & 0.3977 & 0 & 0.8054 & 3 & $2.74\times 10^{-5}$ & 0.7469 & 0.2725 \\
$1.00\times 10^{-5}$ & 4 & 0.2813 & 5 & 0.5300 & 2 & 0.0822 & 1 & 0.1813 & 3 & $2.06\times 10^{-5}$ & 0.1500 & 0.0822 \\
$5.00\times 10^{-6}$ & 5 & 0.2653 & 6 & 0.5001 & 3 & 0.0752 & 1 & 0.1773 & 3 & $2.03\times 10^{-5}$ & 0.0752 & 0.0446 \\
$1.00\times 10^{-6}$ & 7 & 0.1889 & 8 & 0.3562 & 4 & 0.0283 & 2 & 0.0210 & 4 & $1.00\times 10^{-5}$ & 0.0283 & 0.0160 \\
$5.00\times 10^{-7}$ & 8 & 0.1782 & 9 & 0.3362 & 5 & 0.0266 & 2 & 0.0197 & 4 & $1.00\times 10^{-5}$ & 0.0144 & 0.0093 \\
$1.00\times 10^{-7}$ & 10 & 0.1270 & 12 & 0.4520 & 8 & 0.0357 & 3 & 0.0056 & 4 & $1.00\times 10^{-5}$ & 0.0039 & 0.0039 \\
 
\bottomrule
\end{tabular}
\end{table}

Figure~\ref{fig:theta} shows, for $\delta=10^{-7}$, the residual norms $\|F(x_k^\delta)-y^\delta\|$ and the values $\theta(k,y^\delta)$ as functions of $k$; the minimizer $k_*$ of $\theta$ is clearly visible. Figure~\ref{fig:reconstructions} displays the reconstructions obtained with the heuristic rule and with the discrepancy principle for the three values of $\tau$.

\begin{figure}[htb]
\centering
\includegraphics[width=\textwidth]{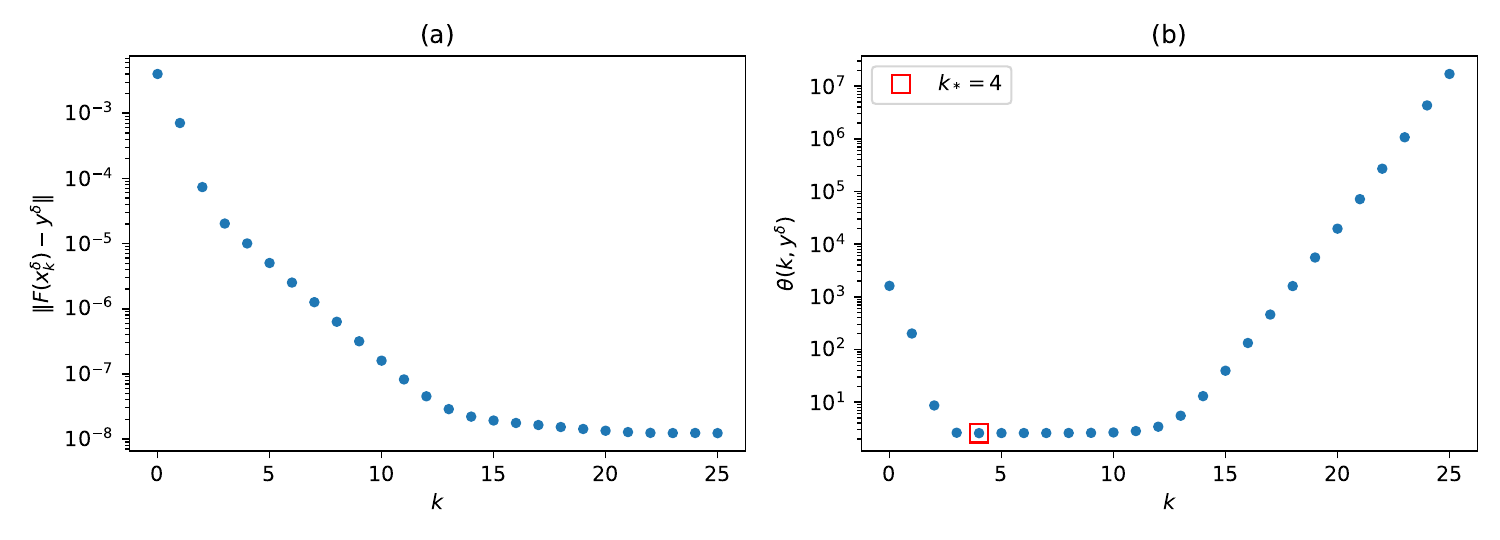}
\caption{Computational results for $\delta=10^{-7}$: (a) $\|F(x_k^\delta)-y^\delta\|$ versus $k$; (b) $\theta(k,y^\delta)$ versus $k$; the square marks the minimizer $k_*$ selected by Rule~\ref{rule:heuristic}.}
\label{fig:theta}
\end{figure}

\begin{figure}[htb]
\centering
\includegraphics[width=\textwidth]{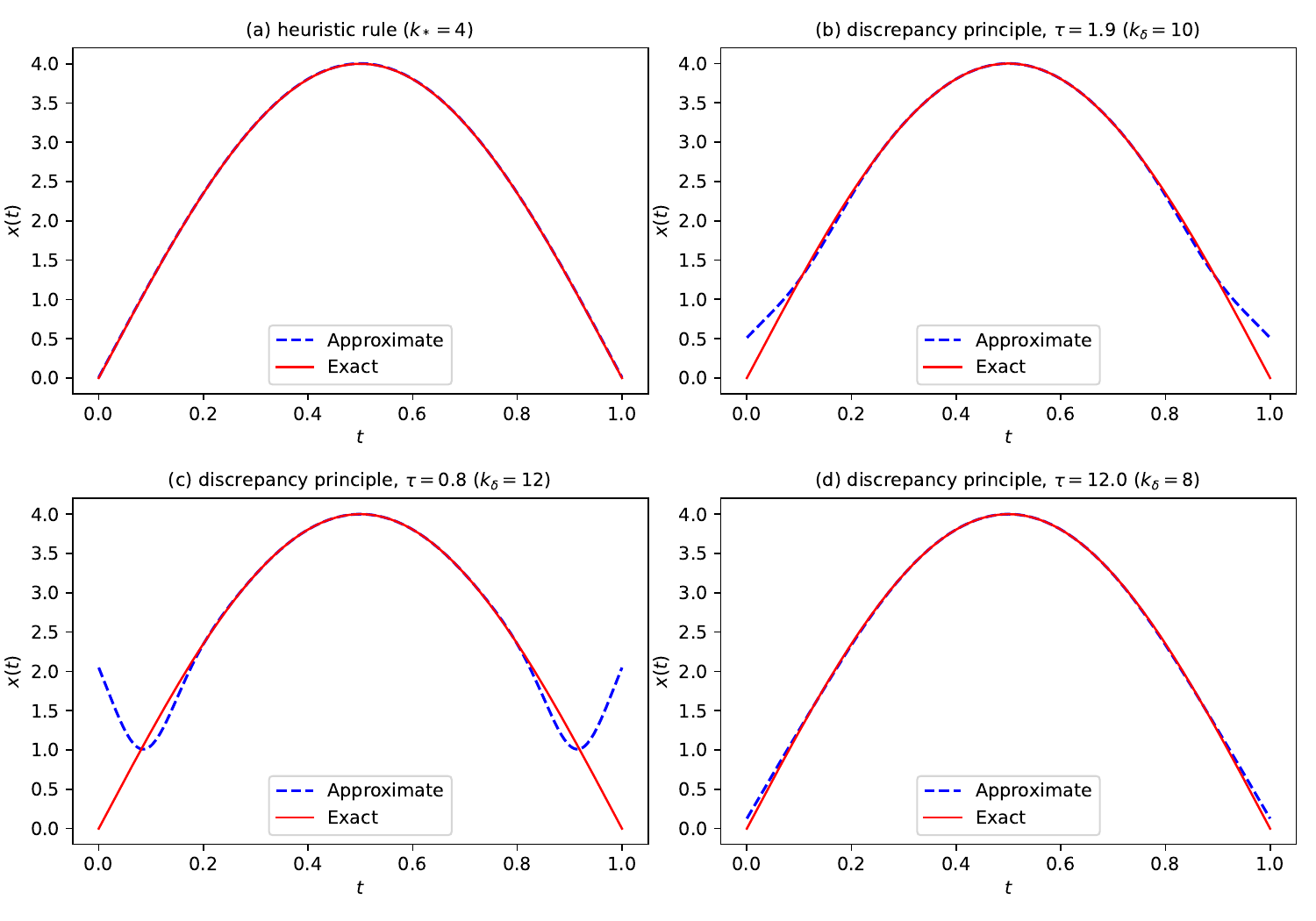}
\caption{Reconstructions for $\delta=10^{-7}$: (a) heuristic rule (Rule~\ref{rule:heuristic}); (b)--(d) discrepancy principle \eqref{eq:dp} with $\tau=1.9$, $\tau=0.8$ and $\tau=12$, respectively.}
\label{fig:reconstructions}
\end{figure}

\subsection{A two-dimensional variable-coefficient diffusion problem}\label{subsec:2d}

We next consider \eqref{eq:F-general} on the unit square $\Omega=(0,1)^2$ with the self-adjoint operator $A=K_a$, where $K_a$ denotes the solution operator of the variable-coefficient elliptic problem
\begin{equation}\label{eq:varcoef}
-\nabla\cdot\bigl(a(s,t)\nabla w\bigr)=x \ \text{ in }\Omega,\qquad w=0 \ \text{ on }\partial\Omega,
\qquad a(s,t):=1+2s+t ,
\end{equation}
and with the nonlinearity $\varphi(u)=u+u^3$, i.e., $F(x)=K_a\bigl(K_ax+(K_ax)^3\bigr)$. Since $a$ is smooth and uniformly positive, $K_a$ is compact, self-adjoint, positive definite, and maps $L^2(\Omega)$ boundedly into $C(\overline\Omega)$, so Proposition~\ref{prop:general-class} applies. In contrast to the one-dimensional example, the operator has no closed-form eigensystem, and we choose a non-symmetric, sign-changing exact solution consisting of two Gaussian bumps,
\[
x^\dg(s,t)=15\,e^{-50\left((s-0.3)^2+(t-0.35)^2\right)}-10\,e^{-60\left((s-0.7)^2+(t-0.65)^2\right)},
\]
with $\|x^\dg\|_{L^2(\Omega)}\approx3.11$; in the matched-model runs of Table~\ref{tab:results2d} the exact data is $y=F_h(x^\dg)$, computed by the discrete forward operator itself (setting (a) above), while the fine-grid data for the stress test of Subsection~\ref{subsec:stress} is generated on the $63\times63$ grid and restricted to the coincident coarse nodes.

For the discretization we use the standard conservative five-point finite difference approximation of $-\nabla\cdot(a\nabla\cdot)$ (with the coefficient evaluated at the cell interfaces) on a uniform grid with $31\times31$ interior points ($h=1/32$); the resulting matrix is symmetric positive definite, so the discrete operator inherits the monotonicity of the continuous one. The initial guess uses $v$ proportional to $x^\dg$, scaled so that $\|x_0-x^\dg\|\approx0.8$, and we take $\alpha_0=10^{-5}$. The results for noise levels between $10^{-5}$ and $10^{-8}$ are reported in Table~\ref{tab:results2d}, and Figure~\ref{fig:2d} displays the exact solution, the reconstructions obtained by the heuristic rule and by the discrepancy principle with $\tau=1.9$, and the pointwise error of the heuristic reconstruction for $\delta=5\times10^{-8}$.

\begin{table}[htb]
\centering
\caption{Matched-model results (setting (a)) for the two-dimensional variable-coefficient example: stopping indices and errors for the discrepancy principle \eqref{eq:dp} with various $\tau$, for the weighted a~posteriori rule \eqref{eq:msrule} of \cite{mahale-singh} with $\tau=1.9$, and for the heuristic rule (Rule~\ref{rule:heuristic}); the last column reports the optimal (oracle) error $\min_{0\le k\le K}\|x_k^\delta-x^\dg\|$ along the computed trajectory.}
\label{tab:results2d}
\footnotesize
\begin{tabular}{c cc cc cc cc ccc c}
\toprule
& \multicolumn{6}{c}{discrepancy principle \eqref{eq:dp}} & \multicolumn{2}{c}{rule \eqref{eq:msrule}} & \multicolumn{3}{c}{Rule~\ref{rule:heuristic}} & \\
\cmidrule(lr){2-7}\cmidrule(lr){8-9}\cmidrule(lr){10-12}
& \multicolumn{2}{c}{$\tau=1.9$} & \multicolumn{2}{c}{$\tau=0.8$} & \multicolumn{2}{c}{$\tau=12$} & \multicolumn{2}{c}{$\tau=1.9$} & & & & optimal\\
\cmidrule(lr){2-3}\cmidrule(lr){4-5}\cmidrule(lr){6-7}\cmidrule(lr){8-9}
$\delta$ & $k_\delta$ & error & $k_\delta$ & error & $k_\delta$ & error & $k_\delta$ & error & $k_*$ & $\delta_*$ & error & error\\
\midrule
$1.00\times 10^{-5}$ & 1 & 0.3690 & 2 & 0.6516 & 1 & 0.3690 & 0 & 0.8000 & 1 & $9.90\times 10^{-6}$ & 0.3690 & 0.3690 \\
$5.00\times 10^{-6}$ & 1 & 0.1868 & 3 & 0.5701 & 1 & 0.1868 & 0 & 0.8000 & 2 & $4.24\times 10^{-6}$ & 0.3259 & 0.1868 \\
$1.00\times 10^{-6}$ & 4 & 0.1971 & 5 & 0.3354 & 1 & 0.0561 & 1 & 0.0561 & 2 & $3.94\times 10^{-6}$ & 0.0676 & 0.0561 \\
$5.00\times 10^{-7}$ & 5 & 0.1677 & 6 & 0.2797 & 2 & 0.0378 & 1 & 0.0475 & 2 & $3.93\times 10^{-6}$ & 0.0378 & 0.0378 \\
$1.00\times 10^{-7}$ & 7 & 0.0910 & 8 & 0.1430 & 4 & 0.0205 & 2 & 0.0212 & 2 & $3.93\times 10^{-6}$ & 0.0212 & 0.0156 \\
$5.00\times 10^{-8}$ & 8 & 0.0715 & 9 & 0.1077 & 5 & 0.0170 & 2 & 0.0206 & 2 & $3.93\times 10^{-6}$ & 0.0206 & 0.0114 \\
$1.00\times 10^{-8}$ & 10 & 0.0308 & 11 & 0.0413 & 8 & 0.0143 & 3 & 0.0109 & 2 & $3.93\times 10^{-6}$ & 0.0205 & 0.0045 \\
 
\bottomrule
\end{tabular}
\end{table}

\begin{figure}[htb]
\centering
\includegraphics[width=0.95\textwidth]{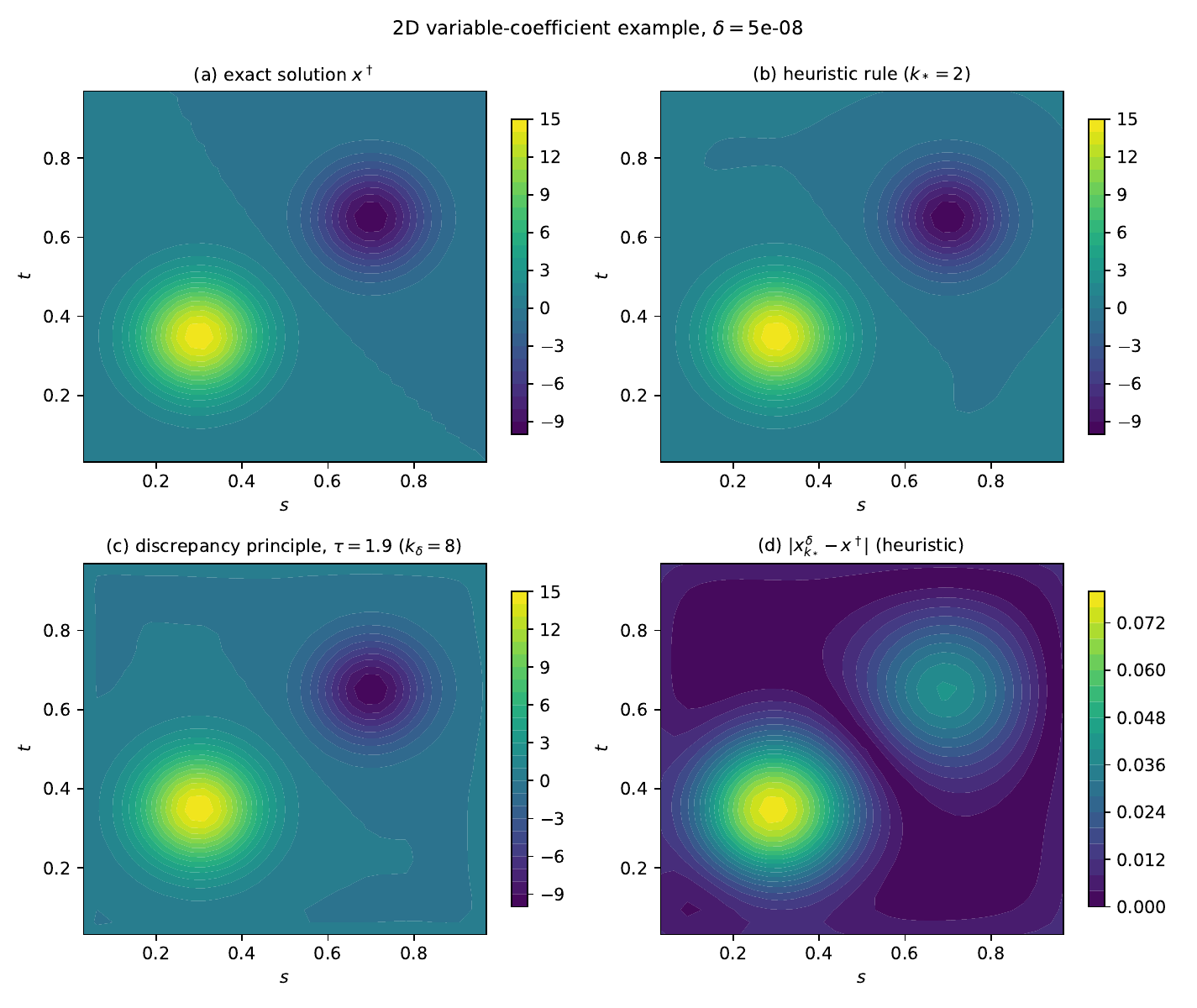}
\caption{Two-dimensional variable-coefficient example with $\delta=5\times10^{-8}$: (a) exact solution; (b) reconstruction by the heuristic rule; (c) reconstruction by the discrepancy principle \eqref{eq:dp} with $\tau=1.9$; (d) pointwise error of the heuristic reconstruction.}
\label{fig:2d}
\end{figure}

\subsection{A three-dimensional convection--diffusion problem}\label{subsec:3d}

Finally, we consider \eqref{eq:F-general} on the unit cube $\Omega=(0,1)^3$ with $A$ the solution operator of the convection--diffusion problem
\begin{equation}\label{eq:convdiff}
-\Delta w+b\cdot\nabla w=x \ \text{ in }\Omega,\qquad w=0 \ \text{ on }\partial\Omega,
\qquad b:=(2,1,-1),
\end{equation}
and again $\varphi(u)=u+u^3$. The operator $A$ is compact and maps $L^2(\Omega)$ boundedly into $C(\overline\Omega)$, but it is not self-adjoint because of the convection term. We emphasize what this example does and does not test: by Proposition~\ref{prop:general-class}, $F(x)=A^*\varphi(Ax)$ is monotone regardless of any symmetry of $A$, and its derivative $F'(x)=A^*\varphi'(Ax)A$ is self-adjoint and nonnegative even though $A$ is not, so the example satisfies Assumption~\ref{ass:nonlinearity}(iv) and lies squarely within the scope of our theory. It demonstrates that the assumption of a self-adjoint derivative does not confine the theory to problems built from symmetric PDE operators; it does not provide a test of genuinely non-self-adjoint derivatives $F'(x)$, which are outside the proved scope of this paper (cf.\ Remark~\ref{rem:selfadjoint}). The exact solution is again a sign-changing pair of Gaussian bumps,
\[
x^\dg(s,t,u)=20\,e^{-40\left((s-0.4)^2+(t-0.4)^2+(u-0.5)^2\right)}
-12\,e^{-50\left((s-0.65)^2+(t-0.6)^2+(u-0.5)^2\right)},
\]
with $\|x^\dg\|_{L^2(\Omega)}\approx1.90$. In the matched-model runs of Table~\ref{tab:results3d} and Figure~\ref{fig:3d} the exact data is $y=F_h(x^\dg)$, computed by the coarse ($11\times11\times11$) discrete forward operator itself (setting (a)); the data for the 3D rows of the stress test in Subsection~\ref{subsec:stress} is generated on the $23\times23\times23$ grid and restricted to the coincident coarse nodes (setting (b)).

The discretization uses the seven-point finite difference Laplacian combined with central differences for the convection term on a uniform grid with $11\times11\times11$ interior points ($h=1/12$; the cell P\'eclet number $\|b\|_\infty h/2=1/12$ is well below one, so central differencing is stable); the discrete adjoint is the matrix transpose. The initial guess uses $v$ proportional to $x^\dg$, scaled so that $\|x_0-x^\dg\|\approx0.8$, and we take $\alpha_0=5\times10^{-6}$. Table~\ref{tab:results3d} reports the results for noise levels between $10^{-5}$ and $10^{-8}$, and Figure~\ref{fig:3d} shows the mid-plane slice $u=1/2$ of the exact solution, of the reconstructions by both rules, and of the pointwise error of the heuristic reconstruction for $\delta=5\times10^{-8}$.

\begin{table}[htb]
\centering
\caption{Matched-model results (setting (a)) for the three-dimensional convection--diffusion example: stopping indices and errors for the discrepancy principle \eqref{eq:dp} with various $\tau$, for the weighted a~posteriori rule \eqref{eq:msrule} of \cite{mahale-singh} with $\tau=1.9$, and for the heuristic rule (Rule~\ref{rule:heuristic}); the last column reports the optimal (oracle) error $\min_{0\le k\le K}\|x_k^\delta-x^\dg\|$ along the computed trajectory.}
\label{tab:results3d}
\footnotesize
\begin{tabular}{c cc cc cc cc ccc c}
\toprule
& \multicolumn{6}{c}{discrepancy principle \eqref{eq:dp}} & \multicolumn{2}{c}{rule \eqref{eq:msrule}} & \multicolumn{3}{c}{Rule~\ref{rule:heuristic}} & \\
\cmidrule(lr){2-7}\cmidrule(lr){8-9}\cmidrule(lr){10-12}
& \multicolumn{2}{c}{$\tau=1.9$} & \multicolumn{2}{c}{$\tau=0.8$} & \multicolumn{2}{c}{$\tau=12$} & \multicolumn{2}{c}{$\tau=1.9$} & & & & optimal\\
\cmidrule(lr){2-3}\cmidrule(lr){4-5}\cmidrule(lr){6-7}\cmidrule(lr){8-9}
$\delta$ & $k_\delta$ & error & $k_\delta$ & error & $k_\delta$ & error & $k_\delta$ & error & $k_*$ & $\delta_*$ & error & error\\
\midrule
$1.00\times 10^{-5}$ & 1 & 0.2397 & 2 & 0.3036 & 1 & 0.2397 & 0 & 0.8000 & 2 & $2.13\times 10^{-6}$ & 0.3036 & 0.2397 \\
$5.00\times 10^{-6}$ & 2 & 0.1518 & 2 & 0.1518 & 1 & 0.1250 & 0 & 0.8000 & 2 & $2.03\times 10^{-6}$ & 0.1518 & 0.1250 \\
$1.00\times 10^{-6}$ & 3 & 0.0362 & 4 & 0.0405 & 2 & 0.0307 & 1 & 0.0486 & 3 & $9.99\times 10^{-7}$ & 0.0362 & 0.0307 \\
$5.00\times 10^{-7}$ & 4 & 0.0203 & 5 & 0.0217 & 2 & 0.0159 & 1 & 0.0443 & 3 & $9.98\times 10^{-7}$ & 0.0182 & 0.0159 \\
$1.00\times 10^{-7}$ & 6 & 0.0045 & 7 & 0.0046 & 3 & 0.0044 & 1 & 0.0430 & 3 & $9.98\times 10^{-7}$ & 0.0044 & 0.0042 \\
$5.00\times 10^{-8}$ & 7 & 0.0023 & 8 & 0.0023 & 4 & 0.0024 & 2 & 0.0051 & 3 & $9.98\times 10^{-7}$ & 0.0031 & 0.0023 \\
$1.00\times 10^{-8}$ & 9 & 0.0005 & 10 & 0.0005 & 7 & 0.0005 & 2 & 0.0049 & 3 & $9.98\times 10^{-7}$ & 0.0025 & 0.0005 \\
 
\bottomrule
\end{tabular}
\end{table}

\begin{figure}[htb]
\centering
\includegraphics[width=0.95\textwidth]{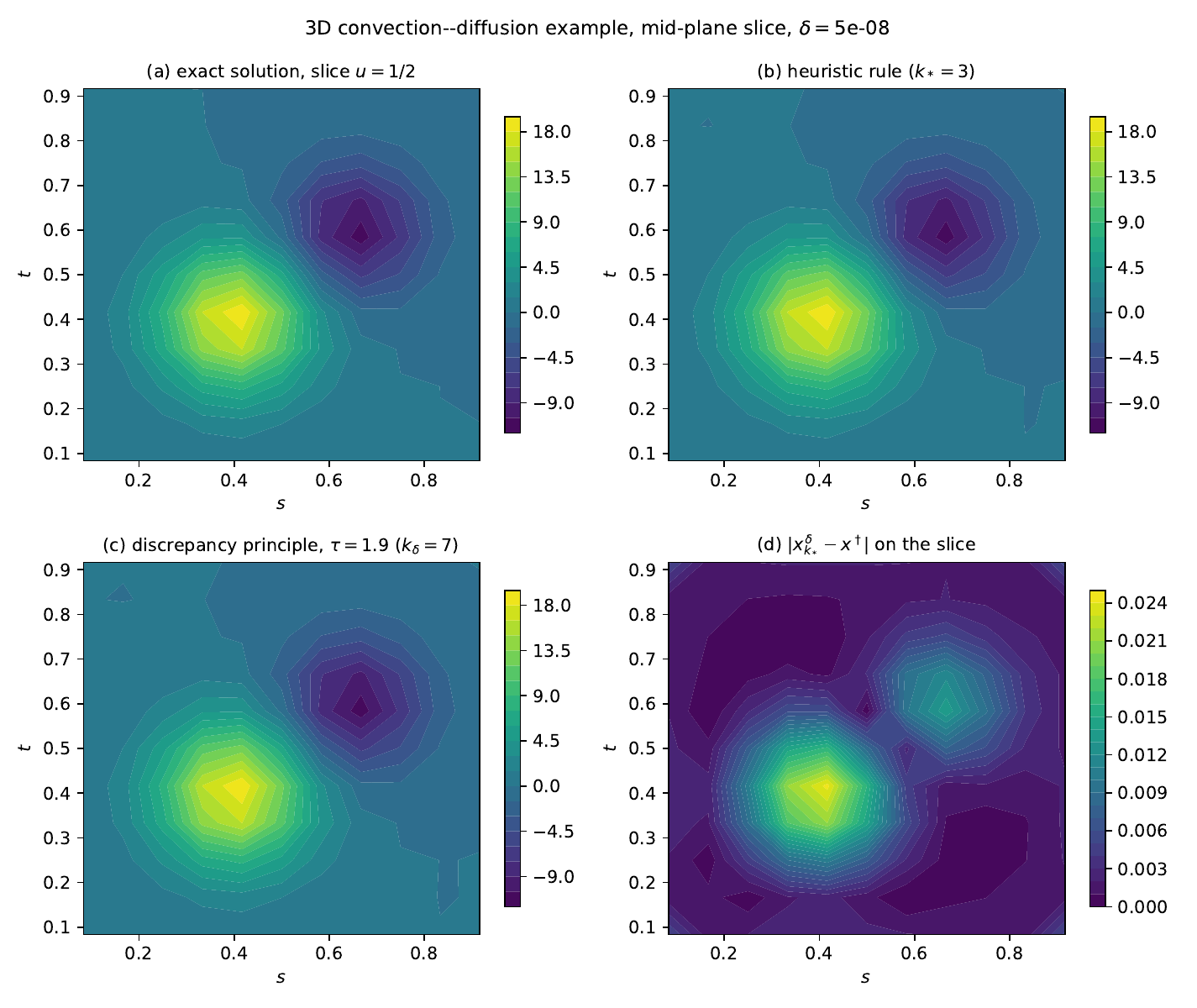}
\caption{Three-dimensional convection--diffusion example with $\delta=5\times10^{-8}$, mid-plane slice $u=1/2$: (a) exact solution; (b) reconstruction by the heuristic rule; (c) reconstruction by the discrepancy principle \eqref{eq:dp} with $\tau=1.9$; (d) pointwise error of the heuristic reconstruction on the slice.}
\label{fig:3d}
\end{figure}

\subsection{Noise models, replications, and diagnostics}\label{subsec:robust}

The experiments above use a fixed, smooth (constant-direction) perturbation, which makes the tables deterministic and reproducible but does not probe robustness with respect to the structure of the noise. Table~\ref{tab:noise} therefore repeats the one-dimensional matched-model experiment with two further noise models, in both cases normalized so that $\|y-y^\delta\|=\delta$ exactly: \emph{white noise}, $y^\delta=y+\delta\,e/\|e\|$ with $e$ a vector of independent standard Gaussian entries ($R=30$ independent realizations per noise level; the table reports mean $\pm$ standard deviation of the errors), and a deterministic \emph{oscillatory} perturbation proportional to $\sin(31\pi t)+\tfrac12\sin(53\pi t)$, which is nearly orthogonal to the smooth ranges of the low-order iterates. The last column reports the smallest observed value of the residual ratio $\rho_{\mathrm{emp}}$ defined in the preamble; since these runs are matched, $\rho_{\mathrm{emp}}$ is a finite-window empirical diagnostic for the constant $\kappa$ of Assumption~\ref{ass:noise} (over $0\le k\le K$; see the preamble).

\begin{table}[htb]
\centering
\caption{One-dimensional matched-model example with random and oscillatory noise. Reconstruction errors for the heuristic rule, the discrepancy principle ($\tau=1.9$), and the weighted rule \eqref{eq:msrule} ($\tau=1.9$); ``optimal'' is the oracle error along the trajectory; for white noise, mean $\pm$ standard deviation over $R=30$ realizations. The last column is the smallest observed value of $\rho_{\mathrm{emp}}$ (a finite-window diagnostic for $\kappa$ in this matched setting).}
\label{tab:noise}
\footnotesize
\begin{tabular}{c l c c c c c}
\toprule
$\delta$ & noise & Rule~\ref{rule:heuristic} & \eqref{eq:dp}, $\tau{=}1.9$ & \eqref{eq:msrule}, $\tau{=}1.9$ & optimal & $\rho_{\mathrm{emp}}$\\
\midrule
$1.00\times 10^{-5}$ & white ($R{=}30$) & 0.3979 $\pm$ 0.0011 & 0.7947 $\pm$ 0.0023 & 0.2018 $\pm$ 0.0002 & 0.2001 $\pm$ 0.0005 & 0.96 \\
$1.00\times 10^{-5}$ & oscillatory & 0.4000 & 0.7999 & 0.2020 & 0.2009 & 0.04 \\
$1.00\times 10^{-6}$ & white ($R{=}30$) & 0.0795 $\pm$ 0.0003 & 0.6325 $\pm$ 0.0034 & 0.0277 $\pm$ 0.0001 & 0.0277 $\pm$ 0.0001 & 0.94 \\
$1.00\times 10^{-6}$ & oscillatory & 0.0800 & 0.6391 & 0.0277 & 0.0277 & 0.14 \\
$1.00\times 10^{-7}$ & white ($R{=}30$) & 0.0084 $\pm$ 0.0000 & 0.5016 $\pm$ 0.0024 & 0.0066 $\pm$ 0.0000 & 0.0066 $\pm$ 0.0000 & 0.90 \\
$1.00\times 10^{-7}$ & oscillatory & 0.0084 & 0.5061 & 0.0067 & 0.0067 & 0.74 \\
 
\bottomrule
\end{tabular}
\end{table}

Three observations are worth recording. First, the behavior of the heuristic rule is stable across noise models and realizations: the standard deviations over the thirty white-noise realizations are two to three orders of magnitude below the means. Second, the diagnostic $\rho_{\mathrm{emp}}$ is comfortably bounded away from zero for white noise ($\rho_{\mathrm{emp}}\ge0.90$ in all such runs), giving empirical support to Assumption~\ref{ass:noise} in that regime; for the oscillatory perturbation at the largest noise level, however, $\rho_{\mathrm{emp}}$ drops to $0.04$ because the iterates can almost fit this structured perturbation, and it is precisely there that the noise condition, and with it our theory, is close to failing, even though the selected index happened to remain reasonable. The same caveat applies, more strongly, to the constant-direction perturbation: in the matched 2D/3D runs of Tables~\ref{tab:results2d}--\ref{tab:results3d} this smooth perturbation is almost fittable by the iterates, with finite-window ratios as small as $\rho_{\mathrm{emp}}\sim10^{-5}$ (2D) and $\sim5\times10^{-8}$ (3D); Assumption~\ref{ass:noise} is thus empirically strained for this noise model even in the matched setting. This is exactly why the white-noise replications, where the assumption is comfortably satisfied, are included: the deterministic tables should be read for reproducibility, the white-noise rows for the empirical standing of the noise condition. Third, we stress an honest limitation: the theoretical smallness conditions of Sections~\ref{sec:wd}--\ref{sec:rates} (on $L\|v\|$, $C_5$, $C_{13}$, etc.)\ are sufficient conditions with conservative constants, and we do not claim that the experiments verify them quantitatively; what the experiments document is the observable behavior of the rule (choice of $k_*$, size of $\delta_*$, reconstruction errors) together with the finite-window diagnostic $\rho_{\mathrm{emp}}$ in the matched setting, which probes Assumption~\ref{ass:noise} but, being restricted to $0\le k\le K$, cannot verify it.

\subsection{Model-mismatch stress tests}\label{subsec:stress}

The experiments of this subsection deliberately step outside the exact setting of the theorems: the data is generated by a more accurate discrete model (setting (b) of the preamble), so the discrete problem sees the added perturbation plus an operator mismatch, while the reported $\delta$ measures the added perturbation only. These runs are therefore not evidence for the $\delta\to0$ theorems; they test robustness when the effective data error is underestimated by the nominal $\delta$, which is the situation in which noise-level-free rules matter most. In the 3D rows the mismatch $\approx1.1\times10^{-5}$ dominates every nominal noise value, so those rows are model-error robustness experiments in their entirety.

\begin{table}[htb]
\centering
\caption{Model-mismatch stress tests (setting (b)): data from a more accurate discrete model (closed-form/quadrature in 1D; fine grids $63\times63$ and $23\times23\times23$ in 2D/3D, restricted to the coincident coarse nodes), reconstruction with the coarse operator, constant-direction added noise of norm $\delta$. Errors for the heuristic rule, the discrepancy principle ($\tau=1.9$), the weighted rule \eqref{eq:msrule} ($\tau=1.9$), and the oracle. The last column is the empirical residual ratio $\rho_{\mathrm{emp}}$, which in this mismatched setting is not a diagnostic for Assumption~\ref{ass:noise}.}
\label{tab:mismatch}
\footnotesize
\begin{tabular}{l c c c c c c}
\toprule
example & $\delta$ (added) & Rule~\ref{rule:heuristic} & \eqref{eq:dp}, $\tau{=}1.9$ & \eqref{eq:msrule}, $\tau{=}1.9$ & optimal & $\rho_{\mathrm{emp}}$\\
\midrule
1D (quadrature) & $1.00\times 10^{-5}$ & 0.1501 & 0.2815 & 0.1813 & 0.0823 & 0.067 \\
1D (quadrature) & $1.00\times 10^{-6}$ & 0.0285 & 0.1893 & 0.0210 & 0.0161 & 0.11 \\
1D (quadrature) & $1.00\times 10^{-7}$ & 0.0040 & 0.1278 & 0.0056 & 0.0040 & 0.12 \\
2D (fine-grid data) & $1.00\times 10^{-6}$ & 0.0655 & 0.1965 & 0.0480 & 0.0480 & $9.05\times 10^{-6}$ \\
2D (fine-grid data) & $1.00\times 10^{-7}$ & 0.0147 & 0.0965 & 0.0147 & 0.0147 & $1.32\times 10^{-5}$ \\
2D (fine-grid data) & $1.00\times 10^{-8}$ & 0.0138 & 0.0510 & 0.0244 & 0.0138 & $9.66\times 10^{-5}$ \\
3D (fine-grid data) & $1.00\times 10^{-5}$ & 0.3364 & 0.2547 & 0.8000 & 0.2547 & $5.74\times 10^{-8}$ \\
3D (fine-grid data) & $1.00\times 10^{-6}$ & 0.1479 & 0.1603 & 0.0985 & 0.0985 & $5.03\times 10^{-7}$ \\
3D (fine-grid data) & $1.00\times 10^{-7}$ & 0.1448 & 0.1685 & 0.0959 & 0.0959 & $5.03\times 10^{-6}$ \\
 
\bottomrule
\end{tabular}
\end{table}

Three observations. First, in 2D the heuristic rule adapts to the effective data error: at $\delta=10^{-8}$ it selects $k_*=2$ with an error of $0.0138$, attaining the oracle, whereas the discrepancy principle, fed the misleadingly small nominal $\delta$, over-iterates against accuracy the coarse model cannot deliver ($0.0510$). Second, in 3D all rules plateau at the level dictated by the mismatch; decreasing the nominal $\delta$ demonstrates nothing further, exactly as one expects when the model error dominates. Third, the residual ratio collapses ($\rho_{\mathrm{emp}}\ll1$): within the horizon, the finite-dimensional coarse operator can fit the smooth data almost exactly, so the residual is not floored by the model error; the effect of the mismatch appears in the reconstruction error, not in the residual. We emphasize, for honesty, that this collapse is driven primarily by the smoothness of the constant-direction perturbation rather than by the mismatch itself: the matched 2D/3D runs with the same perturbation exhibit equally small finite-window ratios (Subsection~\ref{subsec:robust}). The mismatch rows confirm that $\rho_{\mathrm{emp}}$ must not be read as a $\kappa$-diagnostic outside the matched setting, and both settings together illustrate how smooth, structured perturbations are the adversarial case for residual-based noise conditions (cf.\ the oscillatory row of Table~\ref{tab:noise}).

\subsection{Discussion}\label{subsec:discussion}

The picture is consistent across the three examples, which differ in dimension, in the smoothing operator (constant-coefficient, variable-coefficient, and non-self-adjoint convection--diffusion), and in the exact solution. The heuristic rule produces errors that decrease with the noise level, without any knowledge of $\delta$, and at moderate noise levels it is competitive with the best a~posteriori results. The rule \eqref{eq:dp} depends sensitively on $\tau$: the inadmissible $\tau=0.8$ systematically over-iterates, $\tau=1.9$ over-iterates at very small noise levels, and $\tau=12$ performs well overall, but a good $\tau$ cannot be known in advance. The weighted rule \eqref{eq:msrule} of \cite{mahale-singh} is accurate at small noise levels, consistent with its order-optimal theory, but at the largest noise levels it degenerates to $k_\delta=0$ (the threshold $\tau\delta$ already exceeds the initial weighted residual, whose size is governed by $\alpha_0\|x_0-x^\dg\|$ rather than by the noise); like the discrepancy principle, it thus relies on accurate knowledge of $\delta$.

Since the matched tables isolate the setting of the theorems, the small-noise behavior can be attributed correctly. In Tables~\ref{tab:results2d}--\ref{tab:results3d} the selected index $k_*$ and the surrogate $\delta_*$ eventually stagnate as $\delta$ decreases ($k_*=2$, $\delta_*\approx3.9\times10^{-6}$ in 2D, with the heuristic error settling near $0.021$ against an oracle of $0.0045$; $k_*=3$, $\delta_*\approx1.0\times10^{-6}$ in 3D, with an oracle gap widening to a factor of five at $\delta=10^{-8}$), even though the data is exact up to the added noise. We call this a \emph{finite-range parameter-choice plateau} (numerical stagnation of $k_*$ and $\delta_*$): $\theta$ settles at an index governed by the penalty term rather than by the by-then-negligible noise. This plateau is compatible with the weak upper bound of Corollary~\ref{cor:deltastar} but not predicted by it (the corollary supplies an upper bound and does not prove constancy of the selected index or residual), and it is distinct from the saturation of the convergence rate at $p=1$ (Section~\ref{sec:rates}), for which we reserve the word ``saturation''. The 1D runs show the same stagnation in milder form ($\delta_*$ settles near $10^{-5}$) while the error still decreases markedly ($0.79$ at $\delta=10^{-4}$ to $0.0039$ at $\delta=10^{-7}$, faster than $\sqrt\delta$ over this range). In particular, $\delta_*/\delta$ grows as $\delta$ decreases in all three examples, so we do not claim the regime $\delta_*=O(\delta)$ to be typical; the observed errors are consistent with the guaranteed bound $O(\sqrt{\delta_*}+\sqrt\delta)$. The mismatch stress tests of Subsection~\ref{subsec:stress} tell the complementary practical story: when the nominal $\delta$ underestimates the effective data error, the $\delta$-based rules over-iterate while the heuristic rule adapts, attaining the oracle at the two smallest noise levels of the 2D stress rows; this is the practical argument for noise-level-free rules, though it is deliberately not presented as confirmation of the $\delta\to0$ theory.

Two honest caveats remain. First, when $\delta$ (and a suitable $\tau$) are reliably known and the model is exact, the discrepancy principle can match or outperform the heuristic rule. Second, the guaranteed rate is only the baseline $O(\delta^{1/4})$, understood along admissible growing-horizon families (Remark~\ref{rem:growing-horizon}); the $O(\sqrt\delta)$ bound requires $\delta_*=O(\delta)$, a regime not observed asymptotically in any of our runs. The practical advantage of Rule~\ref{rule:heuristic} lies precisely where reliable noise information is unavailable.

\section{Conclusion}\label{sec:concl}

We analyzed the nonstationary iterated Lavrentiev method combined with a retrospective heuristic parameter-choice rule in the spirit of Hanke and Raus \cite{hanke-raus} and Jin and Wang \cite{jin-wang}, which terminates the iteration without knowledge of the noise level. For monotone operators with Lipschitz continuous, locally self-adjoint derivatives we proved well-definedness of the method and of the rule, the error estimate $O(\sqrt{\delta_*}+\sqrt{\delta})$ under the range condition $x_0-x^\dg\in\R(F'(x^\dg))$ for $k_*\ge1$, the two-sided bound $\kappa\delta\le\delta_*\le C\sqrt\delta$ (hence the baseline rate $O(\delta^{1/4})$ along admissible growing-horizon families, and $O(\sqrt\delta)$ whenever $\delta_*=O(\delta)$), and H\"older-rate results for $p\ge1$ including the saturation at $p=1$. Three questions remain open: the case $p<1$, the removal of the self-adjointness assumption on $F'$, and the completion of Proposition~\ref{thm:oracle-pre} to a full oracle inequality (Subsection~\ref{subsec:obstruction}). Compared with discrepancy-type rules \cite{mahale-nair09,mahale-nair13,mahale-singh}, the approach removes the need for precise noise information while retaining rigorous, if more limited, error estimates.

The numerical experiments in one, two, and three space dimensions, based on genuinely monotone operators $F(x)=A^*\varphi(Ax)$ with constant-coefficient, variable-coefficient, and non-self-adjoint convection--diffusion smoothing operators (whose derivatives are self-adjoint, in accordance with Assumption~\ref{ass:nonlinearity}(iv)), support the theoretical findings within their stated scope: in the independently recomputed matched-model runs the heuristic rule delivered stable reconstructions, with a finite-range plateau of $k_*$ and $\delta_*$ at the smallest noise levels that is compatible with (though not predicted by) Corollary~\ref{cor:deltastar}, and in the clearly separated model-mismatch stress tests, which lie outside the exact theory, it adapted to the effective data error where the $\delta$-based rules over-iterated. In relation to the author's earlier work \cite{mahale-singh-etna}, which analyzed a heuristic rule for this method under a different and in general more restrictive nonlinearity condition, the present paper shows that the heuristic approach remains viable for potential monotone operators under a Lipschitz condition on $F'$. Future directions include, beyond the three open problems above, general source conditions, Banach space settings, and parameter identification applications.

\appendix
\section{Horizon and window diagnostics}\label{app:horizon}

Table~\ref{tab:horizon} summarizes, for each matched-model experiment, the computational budget $K$, the ranges of the selected indices, the window diagnostics, and the status of the horizon conditions $K_\infty\ge\hat k$ and $K_\infty\ge k_*^\delta$; these involve constants not accessible from the data and are therefore assumed, not verified (Remarks~\ref{rem:retrospective} and~\ref{rem:growing-horizon}), and the diagnostics are consistent with, but do not certify, them.

\begin{table}[htb]
\centering
\caption{Horizon/window summary of the matched-model experiments (all noise levels of the respective table). ``$\theta\nearrow$ final third'': $\theta(k,y^\delta)$ strictly increasing over the last third of the window; ``$k_*$ stable'': $k_*$ unchanged when $K$ is enlarged by $8$. In the 3D example the residual decays proportionally to $\alpha_k$ beyond $k\approx5$, so $\theta$ is asymptotically constant with a shallow interior minimum ($\theta(K)/\theta(k_*)\in[1.004,1.18]$), yet $k_*$ remains unchanged under enlargement of $K$. The horizon conditions $K_\infty\ge\hat k$ (Theorem~\ref{thm:rate-range}) and $K_\infty\ge k_*^\delta$ (Corollary~\ref{cor:deltastar}) are assumed (not data-certifiable, cf.\ Remark~\ref{rem:growing-horizon}).}
\label{tab:horizon}
\footnotesize
\begin{tabular}{l c c c c c c}
\toprule
experiment & $K$ & $k_*$ (range) & $k_\delta$, $\tau{=}1.9$ (range) & $\theta\nearrow$ final third & $k_*$ stable & horizon cond.\\
\midrule
1D (Table~\ref{tab:results})    & 25 & 2--4 & 2--10 & yes & yes & assumed \\
2D (Table~\ref{tab:results2d})  & 24 & 1--2 & 1--10 & yes & yes & assumed \\
3D (Table~\ref{tab:results3d})  & 24 & 2--3 & 1--9  & no (asympt.\ const.) & yes & assumed \\
\bottomrule
\end{tabular}
\end{table}

\subsection*{Acknowledgment}
The authors gratefully acknowledge the infrastructural support provided by SVKM's College of Engineering, Shirpur, and the Indian Institute of Information Technology, Design and Manufacturing (IIITDM), Kancheepuram.

\subsection*{Funding}
No funding was received for this work.

\subsection*{Competing Interests}
The authors declare that they have no competing interests.

\end{document}